\documentclass[11pt]{amsart} 
\usepackage[lmargin=1in,rmargin=1in,tmargin=1in,bmargin=1in]{geometry}
\usepackage[ps,all,arc,rotate]{xy}
\usepackage{graphicx, float, epstopdf}
\usepackage{bbm}
\usepackage{hyperref, color}
\usepackage{centernot}
\usepackage{fancyhdr}
\usepackage{multirow}
\usepackage[utf8]{inputenc}
\usepackage{amsfonts,amssymb,amsmath,amsthm,mathrsfs}
\usepackage{graphics, setspace}
\usepackage{braket}
\usepackage{mathtools}
\usepackage{tikz}  
\usepackage{upgreek}

\numberwithin{equation}{section}
\numberwithin{figure}{section}
\allowdisplaybreaks[4]         
 
\newtheorem{lemma}{Lemma}[section]
\newtheorem{theorem}{Theorem}[section]
\newtheorem{proposition}{Proposition}[section]

\newtheorem{conjecture}[lemma]{Conjecture}
\newtheorem{corollary}[lemma]{Corollary}
\theoremstyle{definition}
\newtheorem{definition}{Definition}[section]
\newtheorem{remark}{Remark}[section]
\usepackage{booktabs}
\usepackage{array}
\usepackage{vcell}
\newcommand{\dist}{\text{dist}}
\newcommand{\thrzf}{\text{th}_{\text{rzf}}}
\newcommand{\ecc}{\text{ecc}}
\newcommand{\Var}{\text{Var}}
\newcommand{\Geom}{\text{Geom}}
\newcommand{\rad}{\operatorname{rad}}

\newcommand{\Prob}{\mathbb{P}}

\newcommand{\E}{\operatorname{E}}

\newcommand{\mycomment}[1]{}

\newtheorem{thm}{Theorem}[section]
\newcommand{\eptrzf}{\operatorname{ept_{rzf}}}
\newcommand{\eptpzf}{\operatorname{ept_{pzf}}}

\begin{document}

\title[Randomized zero forcing]{Randomized zero forcing}
\author[J.~Geneson]{Jesse Geneson}
\author[I.~Hicks]{Illya Hicks}
\author[N.~Lichtenberg]{Noah Lichtenberg}
\author[A.~Moon]{Alvin Moon}
\author[N.~Robles]{Nicolas Robles}
\date{}

\begin{abstract}
    We introduce randomized zero forcing (RZF), a stochastic color-change process on directed graphs in which a white vertex turns blue with probability equal to the fraction of its incoming neighbors that are blue. Unlike probabilistic zero forcing, RZF is governed by in-neighborhood structure and can fail to propagate globally due to directionality. The model extends naturally to weighted directed graphs by replacing neighbor counts with incoming weight proportions. We study the expected propagation time of RZF, establishing monotonicity properties with respect to enlarging the initial blue set and increasing weights on edges out of initially blue vertices, as well as invariances that relate weighted and unweighted dynamics. Exact values and sharp asymptotics are obtained for several families of directed graphs, including arborescences, stars, paths, cycles, and spiders, and we derive tight extremal bounds for unweighted directed graphs in terms of basic parameters such as order, degree, and radius. We conclude with an application to an empirical input-output network, illustrating how expected propagation time under RZF yields a dynamic, process-based notion of centrality in directed weighted systems.
\end{abstract}

\maketitle

\section{Introduction}

Color changing is a graph-theoretic concept which models the spread of a property,
represented by the color of a vertex, through a graph over discrete time steps.
The concept was formalized in \cite{AIMWG} as a deterministic process known as
\textit{zero forcing}. In zero forcing, a blue vertex $u$ of an arbitrary graph
$G$ changes the color of an adjacent white vertex $w$ if and only if $w$ is the
only white vertex adjacent to $u$. Determining whether an initial set of blue
vertices of $G$ will eventually turn the graph entirely blue is a central
question, with extensive work devoted to characterizing such sets of minimum
size and their relationship to structural and matrix-theoretic parameters of
graphs \cite{hogben_survey}.

\medskip 

The study of zero forcing on graphs was initially motivated by problems in bounding the minimum rank of graphs \cite{AIMWG}, as well as independently arising in quantum control theory \cite{Burgarth_2007Quantum, Severini_2008}. Since its introduction, zero forcing has been extended in many directions by
modifying either the forcing rule or the class of allowable forcing vertices.
Positive semidefinite zero forcing was introduced to study the minimum positive
semidefinite rank problem and differs from standard zero forcing in how forces
propagate across components \cite{psd_zf}. Skew zero forcing further relaxes the
forcing rule by allowing white vertices to force, leading to different extremal
behavior and complexity questions \cite{skewth}. A complementary line of work
focuses on propagation time rather than feasibility, including variants that study tradeoffs between the size of an initial forcing set and the
time required for complete propagation \cite{butler}. Related ideas also
appear in power domination, which models monitoring processes in networks and
can be viewed as a forcing process with delayed observation \cite{power_dom}.  In this setting, electrical networks are monitored using phasor measurement units (PMUs), and the placement of these devices is closely related to the zero forcing process on the underlying network graph. As a result, power domination has been extensively studied across a variety of graph families and graph products \cite{powerdominationproductgraphs}.

\medskip

In \cite{ky_pzf}, Kang and Yi modified the zero forcing definition to be probabilistic. In this new rule, called \textit{probabilistic zero forcing} (PZF), each blue vertex $u$ of an undirected graph $G$ attempts to independently change the color of any incident white vertex $w$ and succeeds with probability, 

\begin{align}\label{def:pzf}
P(\textnormal{$u$ turns $w$ blue}) = \frac{\textnormal{number of blue vertices which are adjacent to $u$}}{\textnormal{degree of $u$}}.
\end{align}

Unlike deterministic zero forcing, minimal forcing sets for the probabilistic
color-change rule \eqref{def:pzf} are trivial: on any connected graph, starting
from any nonempty initial blue set, the process colors the entire graph blue
with probability $1$ \cite{ky_pzf}.  Consequently, the natural parameters in
probabilistic zero forcing (PZF) concern \emph{how quickly} the all-blue state is
reached rather than whether it can be reached.  This motivates the study of the
\emph{expected propagation time} $\eptpzf(G,X)$, defined as the expected number
of update rounds required for PZF to color all of $V(G)$ blue starting from an
initial blue set $X$.

\medskip

The systematic study of $\eptpzf$ was initiated by Geneson and Hogben
\cite{ghpzf}, who established general bounds, exact values on several graph
families, and extremal constructions.  Algorithmic and computational approaches
based on Markov chains were developed in \cite{pzf_using_markov}.  Subsequent
work has sharpened general upper bounds for $\eptpzf$ in terms of classical graph
parameters (such as order and radius) \cite{narayanansun_ejc}, and has developed
asymptotic results for additional families, including random graphs
\cite{english_random} and structured graphs such as grids, regular graphs, and
hypercubes \cite{husun_bica}.  Further refinements and related propagation-time
parameters for these families have also been studied (e.g., in tight-asymptotic
regimes for hypercubes and grids) \cite{behague_eljc}.  Collectively, this body
of work emphasizes that while PZF guarantees eventual propagation on connected
graphs, the expected time scale captures sensitive structural information about
the underlying topology.

\medskip 

In this article, we consider a variation of PZF on directed graphs (edges can be bidirectional) based on a color change rule determined by the in-degree of uncolored vertices. Precisely, suppose $w$ is a white vertex in a directed graph $G$. Then, at each step of the color change process, the probability that $w$ changes from white to blue is given by, 
\begin{align}\label{eq:rzf}
P(\textnormal{$w$ turns blue}) = \frac{\textnormal{\# of incident edges to $w$ with blue source}}{\textnormal{ in-degree of $w$}}.
\end{align}
In the case that $w$ has indegree $0$, the probability that $w$ turns blue is $0$. The color change rule in equation (\ref{eq:rzf}) does not imply that eventually every vertex of $G$ will turn blue, even if $G$ as an undirected graph is connected. If it is possible, then, just as in the case of PZF, the time it takes to color all of $G$ is a random variable determined by the initial set of blue vertices. We define the expected propagation time for this new process, which we call \textit{randomized zero forcing}, to accommodate the chance that the graph is never completely colored. 

We also define randomized zero forcing on graphs with weighted edges by scaling the
spread probability by the relative weight of each incoming edge.
 In an input-output network, this means that a node is more likely to be affected if a sizable share of its required inputs is sourced from a neighbor that has already become infected, capturing the idea that greater economic dependence corresponds to greater vulnerability.
\[
P(w \text{ turns blue}) = 
\frac{\sum_{u \in B^-(w)} w_{uw}}{\sum_{u \in N^-(w)} w_{uw}}
\]
where \(N^-(w)\) denotes the in-neighbors of \(w\) and \(B^-(w)\) denotes the blue in-neighbors of $v$. In the case that $\sum_{u \in N^-(w)} w_{uw} = 0$, the probability that $w$ turns blue is $0$.

\begin{definition}[Randomized zero forcing process]
\label{def:rzf-process}
Let $G$ be a directed graph with nonnegative edge weights $\{w_{uv}\}$, and let
$B_0 \subseteq V(G)$ be the initial set of blue vertices.  
For each integer $t \ge 0$, suppose the current blue set is $B_t$.

For each white vertex $w \notin B_t$, define
\[
p_t(w)
\;:=\;
\begin{cases}
\dfrac{\sum_{u \in B_t} w_{uw}}{\sum_{u \in V(G)} w_{uw}}, & \text{if } \sum_{u \in V(G)} w_{uw} > 0,\\[1.2ex]
0, & \text{if } \sum_{u \in V(G)} w_{uw} = 0.
\end{cases}
\]

At round $t+1$, each white vertex $w \notin B_t$ independently becomes blue with
probability $p_t(w)$, and blue vertices remain blue forever.  That is,
\[
B_{t+1}
=
B_t \;\cup\;
\bigl\{\, w \notin B_t : \text{$w$ succeeds in an independent Bernoulli trial with
parameter $p_t(w)$} \,\bigr\}.
\]

The resulting sequence $(B_t)_{t\ge 0}$ is a time-inhomogeneous Markov chain on
subsets of $V(G)$ with absorbing states.
\end{definition}

\begin{definition}
Let $X \subset V_G$ be a subset of vertices of a directed graph $G$. If it is possible to color all of $G$ blue starting with the initial set of $X$ as the only blue vertices, then $\eptrzf(G,X)$, called the \textit{expected propagation time (for RZF)} is the expected value of the number of iterations it takes to color all of $G$ blue. If it is not possible to color all of $G$ blue starting with $X$, then $\eptrzf(G,X)=\infty$. We use the shorthand $\eptrzf(G,v) = \eptrzf(G,\left\{v\right\})$ for $v \in V(G)$. We let $\eptrzf(G) = \min_{v \in V(G)} \eptrzf(G, v)$. If $\eptrzf(G, v) = \infty$ for all $v \in V(G)$, then $\eptrzf(G) = \infty$. 
\end{definition}

Our motivation for this variation of PZF is to model the propagation of risk in
directed production networks.  We view each vertex as a production unit whose
output depends on inputs received from upstream suppliers, represented by
directed edges.  A vertex is colored blue once it experiences a disruptive
event-such as a supply failure, quality defect, or regulatory shock-that
renders its output compromised.  Crucially, vulnerability is treated as a
\emph{recipient-side} phenomenon: a production unit becomes exposed not because
a particular supplier actively transmits risk, but because a sufficiently large
fraction of its required inputs originate from already-compromised suppliers.

A simple toy model makes this precise.  Suppose that at each discrete time step,
a firm independently samples one of its required inputs, with the probability
of selecting supplier $u$ proportional to the weight of the edge $(u,w)$ (for
example, the share of total inputs sourced from $u$).  If the sampled input
comes from a compromised supplier, the firm’s production fails during that
round, and the firm becomes compromised itself.  Under this model, the
probability that a white vertex $w$ turns blue in a given round is exactly the
fraction of its incoming weight that originates from blue vertices, which is
the randomized zero forcing rule in \eqref{eq:rzf}.  The expected propagation
time $\eptrzf$ then quantifies the expected time until a disruption originating
at a small set of firms spreads throughout the network, providing a natural,
process-based measure of systemic fragility.  Although highly stylized, this
mechanism captures a basic asymmetry present in many real production systems:
risk accumulates through dependence on incoming inputs, rather than through
outgoing influence alone.

\medskip
\noindent\textbf{Our Contributions.}
In Section~\ref{sec:basic} we characterize when $\eptrzf(G,S)$ is finite and we prove foundational monotonicity properties for weighted RZF,
showing that enlarging the initial blue set cannot increase $\eptrzf$, and that increasing
weights on edges emanating from initially blue vertices cannot increase $\eptrzf$. In Section~\ref{sec:families} we compute exact values and sharp asymptotics for several
structured families. In particular, we show RZF propagation is deterministic on arborescences and obtain consequences for
complete $k$-ary trees. We also derive
closed-form expressions for weighted stars
and establish exact propagation time on bidirected cycles with uniform incoming totals.

In Section~\ref{sec:extremal_unweighted} we prove extremal bounds for unweighted directed graphs.
We upper-bound $\eptrzf(G,v)$ in terms of the number of directed edges and give constructions showing quadratic-order
growth is achievable. In Section~\ref{sec:operations} we analyze how $\eptrzf$ behaves under graph operations for unweighted directed graphs, including
joining multiple graphs through a single source vertex, yielding upper bounds in terms of the
component propagation times.

In Section~\ref{sec:weighted_extremal} we return to weighted directed graphs, proving sharp upper bounds
in terms of the minimum edge weight and showing sharpness via explicit weighted constructions. Finally, in Section~\ref{sec:data} we apply the method to a real-life input-output sector network,
computing singleton-start EPT values. We conclude in Section~\ref{sec:conclusion} with open problems and
future directions.

\section{Basic results}\label{sec:basic}

Unlike probabilistic zero forcing on connected undirected graphs, randomized zero
forcing on directed graphs need not propagate to all vertices, even when the
underlying undirected graph is connected.  The obstruction is purely
directional: a vertex can only turn blue if it receives positive-weight input
from vertices that are already blue.  This suggests that the question of whether
$\eptrzf(G,S)$ is finite should depend only on the existence of directed paths of
positive weight from the initial blue set $S$, rather than on more delicate
stochastic considerations.  In fact, as the next theorem shows, there is a
complete and purely structural characterization: the expected propagation time
is finite if and only if every vertex is reachable from $S$ in the directed graph
obtained by retaining only the positive-weight edges of $G$.  In particular, no
additional bottlenecks beyond reachability can prevent eventual propagation
under RZF.

\begin{theorem}\label{thm:finite_ept_characterization}
Let $G$ be a weighted directed graph with nonnegative edge weights, and let
$S\subseteq V(G)$ be the initial blue set.  Let $G^+$ denote the directed graph
obtained from $G$ by retaining exactly those directed edges of positive weight.
Then
\[
\eptrzf(G,S)<\infty
\quad\Longleftrightarrow\quad
\text{every vertex of $G$ is reachable from $S$ in $G^+$.}
\]
\end{theorem}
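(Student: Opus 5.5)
We prove the two directions separately. The forward direction (finiteness forces reachability) is a deterministic invariant argument. The reverse direction is a geometric-tail bound on the propagation time.

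\emph{($\Rightarrow$).} Suppose some vertex is not reachable from $S$ in $G^+$. Let $R$ be the set of vertices reachable from $S$ in $G^+$, so $S\subseteq R\subsetneq V(G)$. We show by induction on $t$ that $B_t\subseteq R$ almost surely. This holds at $t=0$. Suppose $B_t\subseteq R$ and take a white vertex $w\notin R$. Every $u\in B_t$ lies in $R$, and if $w_{uw}>0$ then $w$ would be reachable from $S$ in $G^+$, a contradiction. Hence $w_{uw}=0$ for all $u\in B_t$, so $p_t(w)=0$ (this also covers the case of zero total in-weight). Consequently $w\notin B_{t+1}$. Thus $V(G)\setminus R$ is never colored, the all-blue state is impossible, and by definition $\eptrzf(G,S)=\infty$.

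\emph{($\Leftarrow$).} Assume every vertex is reachable from $S$ in $G^+$, and let $n=|V(G)|$.

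\textbf{Key claim.} There is a constant $q>0$, depending only on $G$, such that from any blue set $B\supseteq S$ with $B\neq V(G)$, at least one new vertex turns blue in the next round with probability at least $q$.

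Indeed, take a directed $G^+$-path from $S$ to some white vertex, and let $w$ be its first white vertex. Its predecessor $u$ on the path is blue and satisfies $w_{uw}>0$. Therefore
\[
p_t(w)\ge \frac{w_{uw}}{\sum_{x} w_{xw}}.
\]
Minimizing this ratio over the finitely many positive-weight edges $(u,w)$ gives the constant
\[
q=\min_{(u,w)\in E(G^+)} \frac{w_{uw}}{\sum_x w_{xw}}>0.
\]

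\textbf{Bounding the time.} By the Markov property, the time spent in each state $B$ before leaving it is stochastically dominated by a geometric random variable with parameter $q$. The blue set can strictly grow at most $n-|S|$ times, so the propagation time $T$ is dominated by a sum of at most $n$ independent $\mathrm{Geom}(q)$ variables. This gives $E[T]\le n/q<\infty$.

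Alternatively, one can argue as follows: in any window of $n$ rounds, the process completes with probability at least $q^{n}$, so $P(T>kn)\le(1-q^n)^k$, and summing these tails gives a finite expectation.

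\textbf{Main obstacle.} The delicate point is making the domination rigorous when the chain is time-inhomogeneous. The key observation is that $p_t$ depends only on the current set $B_t$, so the per-round success bound $q$ holds conditionally on the entire history. This lets us couple the time spent in each state with an independent geometric variable, or simply iterate the conditional bound to obtain the tail estimate $P(T>kn)\le(1-q^n)^k$, which is the cleaner route.
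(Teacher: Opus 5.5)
Your proof is correct. The direction showing that unreachability forces $\eptrzf(G,S)=\infty$ is the same invariant argument as in the paper ($B_t\subseteq R$ by induction). For the converse you take a genuinely different route.

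\textbf{The paper's route.} It fixes a spanning forest of $G^+$ rooted at $S$. It bounds each increment $T(v)-T(\pi(v))$ by a geometric variable with parameter $w_{\pi(v)v}/W^-(v)$, and sums along forest paths to get a bound on $\E[T(v)]$ for each vertex. It then uses the crude inequality $\max_v T(v)\le\sum_v T(v)$.

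\textbf{Your route.} You track the whole blue set rather than individual vertices. A uniform per-round progress probability $q>0$, combined with the fact that the blue set can grow at most $n-|S|$ times, gives $\E[\tau]\le (n-|S|)/q$.

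\textbf{What each buys.} Your argument gives a global bound linear in $n$. The paper's bound is only quadratic-type in general, because it sums path lengths over all vertices. Your window argument also gives the exponential tail $\Pr(\tau>kn)\le(1-q^n)^k$, so all moments of $\tau$ are finite. It is the same mechanism the paper itself uses later for the weighted bound $1+\frac{n-2}{w}$ and the maximum-indegree bound. The paper's argument instead gives vertex-level information: a bound on $\E[T(v)]$ that depends only on the in-weight shares along one chosen path to $v$.

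\textbf{A small remark.} The ``delicate point'' you flag is easier than you suggest. Since $p_t(w)$ depends only on $B_t$, the chain $(B_t)$ is actually time-homogeneous, despite the wording in the paper's definition. So the holding time in any non-absorbing state $B$ is exactly geometric with parameter $1-\Pr(B\to B)\ge q$. Summing expected holding times over the at most $n-|S|$ non-absorbing states visited gives the bound directly, with no coupling needed.
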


\begin{proof}
Write $R$ for the set of vertices reachable from $S$ in $G^+$.  We first prove
the following invariant:
\begin{equation}\label{eq:reachable_invariant}
B_t\subseteq R \qquad\text{for all }t\ge 0,
\end{equation}
where $B_t$ is the set of blue vertices after $t$ rounds.

To see \eqref{eq:reachable_invariant}, note that $B_0=S\subseteq R$.  Now fix
$t\ge 0$ and assume $B_t\subseteq R$.  If $v\notin R$, then by definition of
reachability there is \emph{no} vertex $u\in R$ with a positive-weight edge
$(u,v)$; otherwise we would have a directed path $S\leadsto u\to v$ in $G^+$,
forcing $v\in R$.  Hence every in-edge of $v$ with positive weight has its tail
in $V(G)\setminus R$.  Since $B_t\subseteq R$, it follows that the total
incoming weight to $v$ from blue vertices at time $t$ is $0$, and therefore the
RZF update probability for $v$ in round $t+1$ is $0$.  Thus no vertex outside
$R$ can become blue, and $B_{t+1}\subseteq R$.  This proves
\eqref{eq:reachable_invariant} by induction.

If $R\neq V(G)$, then \eqref{eq:reachable_invariant} implies $B_t\neq V(G)$ for
every $t$, so the all-blue state is unreachable and hence
$\eptrzf(G,S)=\infty$.

Assume now that $R=V(G)$.  Fix a directed spanning forest $F$ of $G^+$ rooted at
$S$: for each $v\in V(G)\setminus S$, choose a parent $\pi(v)$ such that
$(\pi(v),v)$ is an edge of $G^+$ (hence has positive weight) and such that
$\pi(v)$ lies strictly closer to $S$ along some directed path in $G^+$.
For each $v\notin S$, write
\[
\alpha(v) := w_{\pi(v),v} > 0,
\qquad
W^-(v) := \sum_{u\in V(G)} w_{uv}
\]
for the weight of the chosen parent edge and the total incoming weight at $v$,
respectively.

For each vertex $v$, let $T(v)$ be the (random) first round $t$ such that
$v\in B_t$ (so $T(v)=0$ for $v\in S$).  Note that the propagation time to the
all-blue state is
\[
\tau := \min\{t : B_t=V(G)\} = \max_{v\in V(G)} T(v).
\]

Fix $v\notin S$.  On every round $t\ge T(\pi(v))$, vertex $\pi(v)$ is blue, so
under the weighted RZF rule the conditional probability (given the history up
to time $t$) that $v$ turns blue at round $t+1$ satisfies
\[
\Pr\!\bigl(v\in B_{t+1}\mid \mathcal{F}_t\bigr)
\;\ge\;
\frac{\alpha(v)}{W^-(v)} \;=: p(v),
\]
with the convention $p(v)=1$ if $W^-(v)=\alpha(v)$.
Since the random updates at $v$ in successive rounds use independent coin flips
(Definition~\ref{def:rzf-process}), it follows that conditional on
$T(\pi(v))$, the additional waiting time $T(v)-T(\pi(v))$ is stochastically
dominated by a geometric random variable with success probability $p(v)$.
In particular,
\begin{equation}\label{eq:Tv-increment-bound}
\E\!\bigl[T(v)-T(\pi(v))\bigr] \;\le\; \frac{1}{p(v)} \;=\; \frac{W^-(v)}{\alpha(v)} < \infty.
\end{equation}

Now fix $v\notin S$ and follow the unique forest path
$v_0 \to v_1 \to \cdots \to v_k=v$ in $F$, where $v_0\in S$ and
$v_{i-1}=\pi(v_i)$ for each $i\ge 1$.  Then
\[
T(v)
=
\sum_{i=1}^k \bigl(T(v_i)-T(v_{i-1})\bigr).
\]
Taking expectations and applying \eqref{eq:Tv-increment-bound} term-by-term gives
\begin{equation}\label{eq:Tv-bound}
\E[T(v)]
\;\le\;
\sum_{i=1}^k \frac{W^-(v_i)}{\alpha(v_i)}
\;<\;\infty.
\end{equation}
Thus every vertex has finite expected coloring time.

Finally, since $\tau=\max_v T(v)\le \sum_v T(v)$ pointwise, we obtain
\[
\E[\tau] \;\le\; \sum_{v\in V(G)} \E[T(v)].
\]
The sum on the right is finite because $V(G)$ is finite and each $\E[T(v)]$ is
finite by \eqref{eq:Tv-bound}.  Hence $\E[\tau]<\infty$, i.e.,
$\eptrzf(G,S)<\infty$ when $R=V(G)$.
\end{proof}

Next, we show two basic results about randomized zero forcing on weighted directed graphs. First, we show that adding initial blue vertices cannot increase the expected propagation time. Then, we show that increasing edge weights for edges which start from blue vertices cannot increase the expected propagation time. Using these results, we also obtain analogous results for unweighted directed graphs.

\begin{lemma}
Let $G$ be a directed graph with a nonnegative weight $\omega(u,v)$ assigned to each directed edge $(u,v)$. 
Let $S_1 \subseteq S_2 \subseteq V(G)$ be two initial sets of blue vertices, 
let $T \subseteq V(G)$ be an arbitrary target set of vertices, and let $\ell \ge 0$. 
Define $A(S, T, \ell)$ as the event that all vertices in $T$ are blue after $\ell$ rounds of the weighted randomized zero forcing (RZF) process starting from initial blue set $S$. 
Then: 
\[ 
\Pr\!\big(A(S_1, T, \ell)\big) \;\le\; \Pr\!\big(A(S_2, T, \ell)\big) \,. 
\] 
\end{lemma}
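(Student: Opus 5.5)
The plan is to build a coupling of the two processes, started from $S_1$ and $S_2$, on a single probability space, arranged so that the blue sets are nested at every time. For each vertex $v$ and each round $t\ge 1$, fix independent random variables $U_{v,t}$, each uniform on $[0,1]$. In both processes, a white vertex $v$ at the start of round $t$ turns blue in round $t$ exactly when $U_{v,t}\le p_{t-1}(v)$, where $p_{t-1}(v)$ is the RZF probability computed from that process's current blue set as in Definition~\ref{def:rzf-process}. Conditional on the past, the events $\{U_{v,t}\le p\}$ are independent Bernoulli trials with the correct parameters. So each process, viewed on its own, has exactly the law of weighted RZF with the corresponding initial set. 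Write $B^{(1)}_t$ and $B^{(2)}_t$ for the blue sets after $t$ rounds.

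The key step is to prove by induction on $t$ that $B^{(1)}_t\subseteq B^{(2)}_t$ for all $t\ge 0$, pointwise on the coupling space. The base case is $B^{(1)}_0=S_1\subseteq S_2=B^{(2)}_0$. For the inductive step, assume $B^{(1)}_t\subseteq B^{(2)}_t$ and take $v\in B^{(1)}_{t+1}$. If $v\in B^{(1)}_t$, then $v\in B^{(2)}_t\subseteq B^{(2)}_{t+1}$. Otherwise $v\notin B^{(1)}_t$ and $U_{v,t+1}\le p^{(1)}_t(v)$, and there are two cases.
\begin{itemize}
\item If $v\in B^{(2)}_t$, we are done.
\item If $v\notin B^{(2)}_t$, then $v$ is white in both processes. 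Because the weights are nonnegative and the denominator $\sum_u \omega(u,v)$ is the same in both processes, we get $p^{(1)}_t(v)=\sum_{u\in B^{(1)}_t}\omega(u,v)/\sum_u\omega(u,v)\le p^{(2)}_t(v)$, with both equal to $0$ when the denominator is $0$. Hence $U_{v,t+1}\le p^{(2)}_t(v)$, and $v\in B^{(2)}_{t+1}$.
\end{itemize}

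With nesting in hand, $A(S_1,T,\ell)$ (the event $T\subseteq B^{(1)}_\ell$) implies $A(S_2,T,\ell)$ (the event $T\subseteq B^{(2)}_\ell$) on the coupling space. Taking probabilities and using that each marginal is the correct RZF law gives the inequality.

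The main point needing care is justifying that each coupled process really has the RZF law. The process is driven by fresh uniforms each round, independent of $\mathcal{F}_t$, so the conditional distribution of $B_{t+1}$ given the history matches Definition~\ref{def:rzf-process}. Once this is checked, the monotonicity of $p_t(v)$ in the blue set, which holds because the weights are nonnegative, makes the rest routine.
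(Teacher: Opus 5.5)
Your proposal is correct and is essentially the paper's own proof. Both build a monotone coupling from shared independent uniforms $U_t(w)$, prove $B_t^{(1)}\subseteq B_t^{(2)}$ by induction using $p_t^{(1)}(w)\le p_t^{(2)}(w)$, and conclude from the inclusion $A(S_1,T,\ell)\subseteq A(S_2,T,\ell)$ on the coupling space; your explicit treatment of the zero-denominator case and of vertices already blue in the second process is a bit more careful than the paper's write-up but does not change the argument.
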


\begin{proof}

We construct a coupling of two weighted RZF processes, one started from $S_1$ (the \emph{first process}) and one from $S_2$ (the \emph{second process}), 
such that at every time $t$ the set of blue vertices in the first process is contained in the set of blue vertices in the second process. 
Denote by $B_t^{(1)}$ and $B_t^{(2)}$ the blue sets at time $t$ in the first and second process, respectively. 
Our goal is to maintain 
\[ 
B_t^{(1)} \subseteq B_t^{(2)} \quad \text{for all } t = 0,1,\dots,\ell. 
\] 
This monotone coupling immediately implies the lemma, because under the coupling, 
whenever all vertices in $T$ are blue in the first process after $\ell$ rounds, they are also blue in the second process. 

At time $0$, we have $B_0^{(1)} = S_1$ and $B_0^{(2)} = S_2$. Since $S_1 \subseteq S_2$ by assumption, the inclusion $B_0^{(1)} \subseteq B_0^{(2)}$ holds. 
Now assume inductively that after $t$ rounds we have $B_t^{(1)} \subseteq B_t^{(2)}$. 
We describe the coupling of the transitions from time $t$ to time $t+1$. 

At time $t$, consider an arbitrary vertex $w \in V(G)$ that is white in at least one of the processes. 
For the next round, the weighted RZF rule specifies that $w$ becomes blue with a probability that depends on its blue in-neighbors at time $t$. 
Let 
\[ 
p_t^{(1)}(w) := \frac{\sum_{x \in N^-(w) \cap B_t^{(1)}} \omega(x,w)}{\sum_{x \in N^-(w)} \omega(x,w)} 
\quad\text{and}\quad 
p_t^{(2)}(w) := \frac{\sum_{x \in N^-(w) \cap B_t^{(2)}} \omega(x,w)}{\sum_{x \in N^-(w)} \omega(x,w)} \,. 
\] 
Because $B_t^{(1)} \subseteq B_t^{(2)}$, we have 
\[ 
p_t^{(1)}(w) \;\le\; p_t^{(2)}(w) 
\] 
for every vertex $w$. 

To couple the updates, we proceed as follows. 
For each vertex $w$ and each time $t$, we sample a single random variable $U_t(w)$, uniformly distributed in $[0,1]$, and use it in both processes. 
We then declare 
\[ 
w \in B_{t+1}^{(i)} \quad\Longleftrightarrow\quad \bigl(w \in B_t^{(i)}\bigr) \;\text{or}\; \bigl(U_t(w) \le p_t^{(i)}(w)\bigr) 
\] 
for $i=1,2$. 
By construction, for each process $i$ and each $w$, the probability that $w$ becomes blue at time $t+1$ (given the past history) is exactly $p_t^{(i)}(w)$, and the updates for distinct vertices use independent uniforms $\{U_t(w)\}_w$. 
Thus each process individually has the correct weighted RZF distribution. 

Moreover, since $p_t^{(1)}(w) \le p_t^{(2)}(w)$ and the same $U_t(w)$ is used in both processes, we have the implication 
\[ 
U_t(w) \le p_t^{(1)}(w) \;\Longrightarrow\; U_t(w) \le p_t^{(2)}(w) \,. 
\] 
Hence, if $w$ is blue at time $t+1$ in the first process and was white at time $t$ in both processes, then $U_t(w) \le p_t^{(1)}(w)$ and therefore $U_t(w) \le p_t^{(2)}(w)$, so $w$ is also blue at time $t+1$ in the second process. 
Together with the inductive hypothesis $B_t^{(1)} \subseteq B_t^{(2)}$, this shows that 
\[ 
B_{t+1}^{(1)} \subseteq B_{t+1}^{(2)} \,. 
\] 

By induction on $t$, it follows that $B_t^{(1)} \subseteq B_t^{(2)}$ for all $t = 0,1,\dots,\ell$. 
In particular, in the coupled probability space, whenever every vertex in $T$ is blue in the first process after $\ell$ rounds, the same is true in the second process. 
Therefore, 
\[ 
\Pr\!\big(A(S_1, T, \ell)\big) \;\le\; \Pr\!\big(A(S_2, T, \ell)\big) \,, 
\] 
as claimed. 
\end{proof}

\begin{corollary}
Let $G$ be a weighted directed graph and let $S_1 \subseteq S_2 \subseteq V(G)$ be two initial blue sets. Then the expected propagation time under randomized zero forcing (RZF) satisfies:
\[
\eptrzf(G, S_1) \;\ge\; \eptrzf(G, S_2).
\]
\end{corollary}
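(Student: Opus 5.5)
The plan is to deduce the corollary from the preceding lemma, applied with target set $T = V(G)$, via the tail-sum formula for the expectation of a nonnegative integer-valued random variable. Let $\tau_i$ denote the propagation time of the process started from $S_i$, with $\tau_i = \infty$ if the all-blue state is never reached. Since blue vertices stay blue, the event $\{\tau_i \le \ell\}$ is exactly the event $A(S_i, V(G), \ell)$ that all vertices are blue after $\ell$ rounds.

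First I would handle the infinite case. If $\eptrzf(G,S_1) = \infty$, the inequality is trivial. If $\eptrzf(G,S_1) < \infty$, then by Theorem~\ref{thm:finite_ept_characterization} every vertex is reachable from $S_1$ in $G^+$. Since $S_1 \subseteq S_2$, every vertex is also reachable from $S_2$, so $\eptrzf(G,S_2) < \infty$ as well. This also shows that the definitional convention ($\infty$ exactly when full coloring is impossible) agrees with $\E[\tau_i]$ in both cases, so I may work directly with $\E[\tau_i]$.

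Next I would write
\[
\E[\tau_i] = \sum_{\ell \ge 0} \Pr(\tau_i > \ell) = \sum_{\ell\ge 0}\bigl(1 - \Pr(A(S_i, V(G), \ell))\bigr).
\]
The lemma gives $\Pr(A(S_1,V(G),\ell)) \le \Pr(A(S_2,V(G),\ell))$ for every $\ell$, so each summand for $S_1$ dominates the corresponding summand for $S_2$. Summing over $\ell$ yields $\E[\tau_1] \ge \E[\tau_2]$. Alternatively, the coupling from the lemma's proof gives $\tau_1 \ge \tau_2$ pointwise, which yields the same conclusion.

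There is no real obstacle. The only point requiring care is the bookkeeping that identifies $\eptrzf$ with $\E[\tau]$, including the possibly infinite values, and Theorem~\ref{thm:finite_ept_characterization} settles this.
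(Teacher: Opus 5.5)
Your proposal is correct and follows essentially the same route as the paper, which applies the preceding lemma with $T=V(G)$ and compares the tail sums $\sum_{\ell\ge 0}\bigl(1-\Pr(A(S_i,V(G),\ell))\bigr)$ term by term. Your separate treatment of the infinite case via Theorem~\ref{thm:finite_ept_characterization} is a harmless refinement that the paper leaves implicit.
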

\begin{proof}
We express the expected propagation time as 
\[
  \eptrzf(G, S) \;=\; \sum_{\ell = 0}^\infty \Big(1 - \Pr\!\big(A_G(S, V, \ell)\big)\Big)\,,
\] 
since $1 - \Pr(A_G(S, V, \ell))$ is the probability that the propagation has not finished by time $\ell$. By the previously established result, if $S_1 \subseteq S_2$ then $\Pr\!\big(A_G(S_1, V, \ell)\big) \le \Pr\!\big(A_G(S_2, V, \ell)\big)$ for each $\ell$. It follows that $1 - \Pr(A_G(S_1, V, \ell)) \ge 1 - \Pr(A_G(S_2, V, \ell))$ for all $\ell$. Summing these inequalities over all $\ell$ gives $\eptrzf(G, S_1) \ge \eptrzf(G, S_2)$, as desired.
\end{proof}

\begin{lemma}
Let $G$ and $G'$ be directed graphs on the same vertex set $V$, with possibly weighted edges, and let $S \subseteq V$ be a set of initially blue vertices. 
Assume $G'$ is obtained from $G$ by increasing the weights of some directed edges $(u,w)$ with $u \in S$ (i.e., all edges whose weights change have their tail in $S$; for each such edge $(u,w)$, $W_{G'}(u,w) \ge W_G(u,w)$, and other edge weights remain unchanged). Here $W_H(x,w)$ denotes the weight of edge $(x,w)$ in graph $H$ (with $W_H(x,w)=0$ if no such edge exists). 
For any set $T \subseteq V$ and any number of rounds $\ell \ge 1$, let $A_G(S,T,\ell)$ be the event that all vertices in $T$ are blue after $\ell$ rounds of RZF on graph $G$ starting from initial blue set $S$. 
Then:
\[
\Pr_{G}\!\big(A_G(S,T,\ell)\big)\;\le\;\Pr_{G'}\!\big(A_{G'}(S,T,\ell)\big)\,,
\] 
i.e., increasing edge weights out of initially-blue vertices (without changing $S$) can only increase the probability of eventually forcing all vertices in $T$ blue.
\end{lemma}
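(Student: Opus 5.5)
The plan is to reuse the monotone coupling from the proof of the previous lemma, now coupling RZF on $G$ with RZF on $G'$, both started from the same initial blue set $S$. Write $B_t$ and $B_t'$ for the blue sets at time $t$ in the processes on $G$ and $G'$. We aim to maintain $B_t \subseteq B_t'$ for all $t \le \ell$. At time $0$ both equal $S$. At each round we draw one uniform $U_t(w)\in[0,1]$ per vertex, shared by both processes, and declare $w$ blue at time $t+1$ in a process if it was already blue there or if $U_t(w)$ is at most that process's update probability. As in the previous lemma, each marginal process then has the correct weighted RZF law. The event $A_G(S,T,\ell)$ is contained in $A_{G'}(S,T,\ell)$ on the coupled space, which gives the stated inequality.

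The only new ingredient is the one-step comparison. We must show that whenever $S \subseteq B_t \subseteq B_t'$, every vertex $w$ that is white in $G$'s process satisfies
\[
p_t^{G}(w) \le p_t^{G'}(w).
\]
Fix such a $w \notin S$. Split its incoming weight in $G$ into three parts:
\begin{itemize}
\item $A := \sum_{x\in S} W_G(x,w)$, the weight from $S$;
\item $C := \sum_{x\notin S} W_G(x,w)$, the weight from outside $S$;
\item $C_B := \sum_{x\in B_t\setminus S} W_G(x,w)$, the part of $C$ coming from $B_t$.
\end{itemize}
Let $A' := \sum_{x\in S} W_{G'}(x,w)$ be the corresponding weight from $S$ in $G'$. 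The hypothesis says that only edges with tail in $S$ change weight, so $A' \ge A$ and the weights counted in $C$ are the same in $G'$. Also $0 \le C_B \le C$.

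If $A + C = 0$, then $p_t^G(w)=0$ and there is nothing to prove. Otherwise
\[
p_t^{G}(w) = \frac{A + C_B}{A + C},
\qquad
p_t^{G'}(w) = \frac{A' + C_{B'}}{A' + C} \;\ge\; \frac{A' + C_B}{A' + C},
\]
where $C_{B'} \ge C_B$ is the analogous weight from $B_t'\setminus S$, and the inequality uses $B_t \subseteq B_t'$. It remains to observe that $x \mapsto (x + C_B)/(x + C)$ is nondecreasing for $x\ge 0$, $x + C>0$. Its derivative is $(C - C_B)/(x+C)^2 \ge 0$. Taking $x$ from $A$ to $A'$ gives $p_t^G(w) \le p_t^{G'}(w)$.

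Given this comparison, the induction step is the same as in the previous lemma. If $w \in B_{t+1}\setminus B_t$, then either $w \in B_t'$ already, or $w$ is white in both processes and $U_t(w) \le p_t^G(w) \le p_t^{G'}(w)$, so $w \in B_{t+1}'$. Hence $B_{t+1} \subseteq B_{t+1}'$. The main point requiring care is exactly where $S \subseteq B_t$ is used. All the extra weight in $G'$ sits on edges whose tails are already blue, so it raises the numerator and the denominator by the same amount. That is what makes the ratio increase; an increase on an edge from a white vertex could instead decrease $p_t(w)$.
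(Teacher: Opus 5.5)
Your proposal is correct and follows essentially the same route as the paper: a shared-uniform monotone coupling with the induction $B_t\subseteq B_t'$, and a one-step comparison $p_t^G(w)\le p_t^{G'}(w)$ that relies on all extra weight sitting on edges whose tails lie in $S$ and are therefore blue. Your monotonicity of $x\mapsto (x+C_B)/(x+C)$ on $[A,A']$ is a reformulation of the paper's cross-multiplied inequality $(b+\Delta)/(d+\Delta)\ge b/d$. Your treatment of the degenerate case $A+C=0$ matches the paper's $d=0$ case.
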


\begin{proof}
We couple the RZF processes on $G$ and $G'$ so that the blue set in $G$ is always contained in the blue set in $G'$.

For each $t=0,1,2,\dots,\ell$, let $B_t^G$ and $B_t^{G'}$ denote the set of blue vertices at time $t$ in the RZF processes on $G$ and $G'$, respectively. Note that once a vertex turns blue it remains blue forever in either process, so in particular 
\[ 
S = B_0^G = B_0^{G'} \subseteq B_t^G,\ B_t^{G'} \quad \text{for all } t \ge 0\,.
\] 

At each round $t = 1,2,\dots,\ell$ and for each vertex $w \in V$, we draw an independent uniform random variable $U_t(w) \sim \mathrm{Unif}(0,1)$, and use the same value $U_t(w)$ for the update of $w$ in both graphs. The update rule in graph $G$ is:
\[ 
w \in B_t^G 
\quad\Longleftrightarrow\quad 
\bigl(w \in B_{t-1}^G\bigr)\ \text{or}\ \bigl(U_t(w) \le p_t^G(w)\bigr)\,,
\] 
where 
\[ 
p_t^G(w) 
\;:=\; 
\frac{\sum_{x \in V \cap B_{t-1}^G} W_G(x,w)}{\sum_{x \in V} W_G(x,w)}\,,
\] 
with the convention that $p_t^G(w) = 0$ if the denominator (total in-weight into $w$ in $G$) is zero. The process on $G'$ is defined analogously, using 
\[ 
p_t^{G'}(w) 
\;:=\; 
\frac{\sum_{x \in V \cap B_{t-1}^{G'}} W_{G'}(x,w)}{\sum_{x \in V} W_{G'}(x,w)}\,,
\] 
again with the convention $p_t^{G'}(w) = 0$ if the denominator is zero. Marginally, each process has the correct RZF distribution, since for a fixed history the probability that $w$ becomes blue at time $t$ is exactly $p_t^G(w)$ in $G$ and $p_t^{G'}(w)$ in $G'$.

We now show by induction on $t$ that 
\[ 
B_t^G \;\subseteq\; B_t^{G'} \qquad \text{for all } t = 0,1,\dots,\ell\,.
\] 

\smallskip\noindent 
\emph{Base step ($t=0$).} We have $B_0^G = S = B_0^{G'}$, so $B_0^G \subseteq B_0^{G'}$.

\smallskip\noindent 
\emph{Inductive step.} 
Assume $B_{t-1}^G \subseteq B_{t-1}^{G'}$ for some $t \ge 1$. Fix a vertex $w \in V$, and let us compare its update probabilities in $G$ and $G'$ at time $t$.

Define 
\[ 
b \;:=\; \sum_{\substack{x \in V\\ x \in B_{t-1}^G}} W_G(x,w)\,, 
\qquad 
d \;:=\; \sum_{x \in V} W_G(x,w)\,,
\] 
so that (for $d>0$) 
\[ 
p_t^G(w) \;=\; \frac{b}{d}\,. 
\] 

Now consider the in-neighbors and weights in $G'$. By assumption, $G'$ is obtained from $G$ by increasing weights on edges $(u,w)$ from some $u \in S$. Let 
\[ 
\Delta \;:=\; \sum_{u \in S} \Big( W_{G'}(u,w) - W_G(u,w) \Big)\,,
\] 
the total additional in-weight into $w$ coming from $S$ in $G'$ (note $\Delta \ge 0$). Since edges not from $S$ have the same weight in $G$ and $G'$, we have 
\[ 
\sum_{x \in V} W_{G'}(x,w) \;=\; \sum_{x \in V} W_G(x,w) + \Delta \;=\; d + \Delta\,,
\] 
so that 
\[ 
\deg_{G'}^-(w) \text{ (total in-weight into $w$ in $G'$)} \;=\; d + \Delta\,.
\] 

Each edge $(u,w)$ whose weight increased has $u \in S$, and every such $u$ is blue at every time step in both processes (since $S$ is initially blue and remains blue). In particular, for each such $u$ we have $u \in B_{t-1}^G$ and $u \in B_{t-1}^{G'}$. Thus all the additional in-weight $\Delta$ in $G'$ comes from blue in-neighbors of $w$. Moreover, by the inductive hypothesis $B_{t-1}^G \subseteq B_{t-1}^{G'}$, any in-neighbor of $w$ that is blue in $G$ at time $t-1$ is also blue in $G'$ at time $t-1$. Therefore, the total weight of blue in-neighbors of $w$ in $G'$ at time $t-1$ satisfies 
\[ 
b' \;:=\; \sum_{\substack{x \in V\\ x \in B_{t-1}^{G'}}} W_{G'}(x,w) \;\ge\; b + \Delta\,,
\] 
since it includes all of $b$ (blue neighbors from $G$) plus $\Delta$ (the added weight from blue neighbors in $S$). Hence (when $d + \Delta > 0$), 
\[ 
p_t^{G'}(w) \;=\; \frac{b'}{\,d + \Delta\,} \;\ge\; \frac{b + \Delta}{\,d + \Delta\,}\,. 
\] 

If $d = 0$, then $p_t^G(w) = 0$. In $G'$, either $\Delta = 0$ as well (so still no in-neighbor weight and $p_t^{G'}(w)=0$), or $\Delta > 0$ (meaning $w$ has acquired some in-weight from blue vertices in $S$), in which case $b' = \Delta$ and $p_t^{G'}(w) = \frac{b'}{d+\Delta} = \frac{\Delta}{\Delta} = 1$. In either case we have $p_t^{G'}(w) \ge p_t^G(w)$.

If $d > 0$, then $0 \le b \le d$, and for any $\Delta \ge 0$ we have 
\[ 
\frac{b + \Delta}{\,d + \Delta\,} \;\ge\; \frac{b}{\,d\,}\,. 
\] 
Indeed, 
\[ 
\frac{b + \Delta}{\,d + \Delta\,} \ge \frac{b}{\,d\,} 
\quad\iff\quad 
(b + \Delta)d \ge b(d + \Delta) 
\quad\iff\quad 
bd + \Delta d \ge bd + b\Delta 
\quad\iff\quad 
\Delta(d - b) \ge 0\,,
\] 
which holds because $\Delta \ge 0$ and $d - b \ge 0$. 

Combining both cases, we conclude that for every vertex $w$, 
\[ 
p_t^{G'}(w) \;\ge\; p_t^G(w)\,. 
\] 

Since we use the same $U_t(w)$ in both processes, it follows that 
\[ 
U_t(w) \le p_t^G(w) \quad \Longrightarrow \quad U_t(w) \le p_t^{G'}(w)\,. 
\] 
Therefore, if $w$ is blue at time $t$ in $G$ (and was white at time $t-1$ in both processes), then $U_t(w) \le p_t^G(w)$ implies $U_t(w) \le p_t^{G'}(w)$, so $w$ is also blue at time $t$ in $G'$. Together with the inductive hypothesis $B_{t-1}^G \subseteq B_{t-1}^{G'}$, this implies 
\[ 
B_t^G \;\subseteq\; B_t^{G'}\,.
\] 

By induction, $B_t^G \subseteq B_t^{G'}$ for all $t = 0,1,\dots,\ell$. 

Finally, if all vertices of $T$ are blue in $G$ after $\ell$ rounds (event $A_G(S,T,\ell)$), then $T \subseteq B_\ell^G \subseteq B_\ell^{G'}$, so $A_{G'}(S,T,\ell)$ also occurs. Under our coupling, this means 
\[ 
A_G(S,T,\ell) \;\subseteq\; A_{G'}(S,T,\ell)\,. 
\] 
Therefore, 
\[ 
\Pr_G\!\big(A_G(S,T,\ell)\big) \;\le\; \Pr_{G'}\!\big(A_{G'}(S,T,\ell)\big)\,. 
\] 
This completes the proof.
\end{proof}

\begin{corollary}
    Let $G$ and $G'$ be weighted directed graphs on the same vertex set $V$, and let $S \subseteq V$ be a set of initially blue vertices. 
Assume $G'$ is obtained from $G$ by increasing the weights on some directed edges $(u,w)$ with $u \in S$ (i.e., all edges with increased weights must originate from vertices in $S$). Then, we have $\eptrzf(G, S) \ge \eptrzf(G', S)$.
\end{corollary}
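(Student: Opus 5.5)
The plan is to argue exactly as in the proof of the preceding corollary on enlarging the initial blue set, this time invoking the lemma on increasing weights out of $S$ in place of the set-monotonicity lemma. The first step is to write the expected propagation time as a tail sum. Let $\tau_G$ denote the (possibly infinite) number of rounds until $B_t = V$ in the RZF process on $G$ from $S$. Since $\tau_G$ takes values in $\{0,1,2,\dots\}\cup\{\infty\}$, we have
\[
\eptrzf(G,S) \;=\; \E[\tau_G] \;=\; \sum_{\ell=0}^{\infty} \Pr(\tau_G > \ell) \;=\; \sum_{\ell=0}^{\infty}\Bigl(1-\Pr_G\bigl(A_G(S,V,\ell)\bigr)\Bigr),
\]
and the same identity holds for $G'$. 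The event $\{\tau_G\le \ell\}$ is precisely $A_G(S,V,\ell)$, because blue vertices stay blue.

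This identity also covers the infinite case consistently. If $\eptrzf(G,S)=\infty$ because the all-blue state is unreachable, then every summand equals $1$ and the sum diverges. If instead the state is reachable, then $\tau_G<\infty$ almost surely (by Theorem~\ref{thm:finite_ept_characterization}), and the formula is the usual tail-sum formula. So no separate case analysis is needed, provided we read the definition of $\eptrzf$ as $\E[\tau]$ with $\tau=\infty$ allowed.

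The second step applies the preceding lemma with $T=V$. For every $\ell\ge 1$ it gives
\[
\Pr_G\bigl(A_G(S,V,\ell)\bigr)\le \Pr_{G'}\bigl(A_{G'}(S,V,\ell)\bigr).
\]
For $\ell=0$ the lemma is not stated, but nothing has happened yet: both probabilities equal $\mathbbm{1}[S=V]$, so the inequality holds trivially. Subtracting from $1$ and summing over $\ell$ termwise, with nonnegative terms and values in $[0,\infty]$, gives $\eptrzf(G,S)\ge \eptrzf(G',S)$.

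I expect no real obstacle here. The only points needing care are the $\ell=0$ term, which is excluded from the lemma's hypothesis, and confirming that the tail-sum representation is valid when the expectation is infinite. Both are handled above.
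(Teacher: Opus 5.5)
Your proposal is correct and follows the paper's proof essentially verbatim: write $\eptrzf$ as a tail sum, apply the weight-monotonicity lemma with $T=V$ termwise, and sum. Your explicit handling of the $\ell=0$ term, which the lemma's statement excludes, and of the infinite-expectation case is a small improvement in rigor over the paper's write-up, which passes over both.
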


\begin{proof}
    We express the expected propagation time as a sum over tail probabilities. For any graph $H$ on $V$, let $B_\ell^H$ be the set of blue vertices after $\ell$ steps of the RZF process on $H$ (starting from $S$), and let $A_H(S, V, \ell)$ be the event that all vertices in $V$ are blue within $\ell$ steps in $H$. Then:
    \[
    \eptrzf(G, S) = \sum_{\ell=0}^\infty \mathbb{P}_G\!\big(B_\ell^G \neq V\big)
    = \sum_{\ell=0}^\infty \Big(1 - \mathbb{P}_G\!\big(A_G(S, V, \ell)\big)\Big),
    \] 
    and similarly,
    \[
    \eptrzf(G', S) = \sum_{\ell=0}^\infty \mathbb{P}_{G'}\!\big(B_\ell^{G'} \neq V\big)
    = \sum_{\ell=0}^\infty \Big(1 - \mathbb{P}_{G'}\!\big(A_{G'}(S, V, \ell)\big)\Big).
    \]
    By the given inequality, for each $\ell$,
    \[
    \mathbb{P}_G\!\big(A_G(S, V, \ell)\big) \le \mathbb{P}_{G'}\!\big(A_{G'}(S, V, \ell)\big).
    \]
    Subtracting from 1 yields 
    \[
    1 - \mathbb{P}_G\!\big(A_G(S, V, \ell)\big) \ge 1 - \mathbb{P}_{G'}\!\big(A_{G'}(S, V, \ell)\big).
    \]
    Thus the $\ell$th summand of $\eptrzf(G, S)$ is at least the $\ell$th summand of $\eptrzf(G', S)$. Summing over all $\ell$ gives $\eptrzf(G, S) \ge \eptrzf(G', S)$, as desired.
\end{proof}

By restricting to the case when all edges have weight $1$, we obtain the following corollaries for unweighted directed graphs.

\begin{corollary}
Let $G$ be a directed graph. Let $S_1 \subseteq S_2 \subseteq V(G)$ be two initial sets of blue vertices, let $T \subseteq V(G)$ be an arbitrary target set of vertices, and let $\ell \ge 0$. Define $A(S, T, \ell)$ as the event that all vertices in $T$ are blue after $\ell$ rounds of the randomized zero forcing (RZF) process starting from initial blue set $S$. Then:
\[
\Pr\!\big(A(S_1, T, \ell)\big) \;\le\; \Pr\!\big(A(S_2, T, \ell)\big)\,. 
\]
\end{corollary}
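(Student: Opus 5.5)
This corollary is the unweighted specialization of the first monotonicity lemma, so the plan is simply to embed the unweighted directed graph $G$ into the weighted setting and invoke that lemma. Concretely, given $G$, define the weight function $\omega(u,v)=1$ for every directed edge $(u,v)\in E(G)$ and $\omega(u,v)=0$ for non-edges. The first step is to check that the weighted RZF process of Definition~\ref{def:rzf-process} with these weights coincides in law with the unweighted RZF rule \eqref{eq:rzf}. For a white vertex $w$ with current blue set $B_t$, we have
\[
p_t(w)=\frac{\sum_{u\in B_t}\omega(u,w)}{\sum_{u\in V(G)}\omega(u,w)}=\frac{|N^-(w)\cap B_t|}{|N^-(w)|},
\]
which is exactly the fraction of in-edges of $w$ with blue source. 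When $w$ has in-degree $0$, both conventions give probability $0$. Since the updates are independent Bernoulli trials with these parameters in both descriptions, the two Markov chains $(B_t)_{t\ge0}$ have identical transition kernels. Hence, for each starting set $S$, target set $T$ and number of rounds $\ell$, the probability of $A(S,T,\ell)$ is the same whether it is computed in the unweighted or the weighted model.

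The second step is to apply the weighted lemma with $S_1\subseteq S_2$, the same $T$, and the same $\ell$. This gives $\Pr(A(S_1,T,\ell))\le\Pr(A(S_2,T,\ell))$ for the weighted process, and the identification above transfers the inequality to the unweighted one.

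I do not expect a genuine obstacle here. The only point requiring care is the bookkeeping in the identification: the weighted definition sums over all of $V(G)$, while the unweighted rule counts edges. Setting non-edge weights to $0$ reconciles the two, and the zero-in-degree convention has to be matched explicitly.
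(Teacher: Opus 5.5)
Your proposal is correct and follows the paper's own route: the paper obtains this corollary by restricting the weighted monotone-coupling lemma to the case where every edge has weight $1$, exactly as you do. Your explicit check that unit weights (with $0$ on non-edges) reproduce the unweighted rule, including the in-degree-zero convention, is the only bookkeeping needed, and you handle it correctly.
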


\begin{corollary}
Let $G$ be a directed graph and let $S_1 \subseteq S_2 \subseteq V(G)$ be two initial blue sets. Then the expected propagation time under randomized zero forcing (RZF) satisfies:
\[
\eptrzf(G, S_1) \;\ge\; \eptrzf(G, S_2).
\]
\end{corollary}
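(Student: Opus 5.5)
This is the unweighted specialization of the weighted corollary proved just above, so the plan is to reduce to that result rather than build a new coupling. Given an unweighted directed graph $G$, define the weighted directed graph $\widehat G$ on the same vertex set by assigning weight $\omega(u,v)=1$ to every directed edge $(u,v)$ of $G$ and weight $0$ to every non-edge. For a white vertex $w$, the weighted RZF probability in $\widehat G$ is
\[
\frac{\sum_{u\in B_t}\omega(u,w)}{\sum_{u\in V(G)}\omega(u,w)}=\frac{|N^-(w)\cap B_t|}{\deg^-(w)}
\]
when $\deg^-(w)>0$. When $\deg^-(w)=0$ both rules give $0$. Hence the two Markov chains $(B_t)_{t\ge 0}$ have identical transition probabilities from every state. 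It follows that $\eptrzf(G,S)=\eptrzf(\widehat G,S)$ for every initial set $S$, including the case where both values equal $\infty$.

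Next, apply the weighted monotonicity corollary (enlarging the initial blue set) to $\widehat G$ with $S_1\subseteq S_2$. This gives $\eptrzf(\widehat G,S_1)\ge\eptrzf(\widehat G,S_2)$. Transferring back through the identification above yields the claim.

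As a check, one can also argue directly. Use the tail-sum formula
\[
\eptrzf(G,S)=\sum_{\ell\ge 0}\bigl(1-\Pr(A(S,V,\ell))\bigr),
\]
together with the unweighted corollary just stated, applied with $T=V(G)$. This gives a termwise inequality between the two sums, and summing proves the result.

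The only point requiring care is the infinite case. If $\eptrzf(G,S_2)=\infty$, then by Theorem~\ref{thm:finite_ept_characterization} some vertex is unreachable from $S_2$, and hence also unreachable from $S_1\subseteq S_2$. Alternatively, in that case every tail term for $S_2$ is $1$, and the termwise inequality forces the same for $S_1$. Either way the inequality holds in the extended reals, so I expect no real obstacle.
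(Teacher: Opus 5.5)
Your proposal is correct and follows essentially the same route as the paper, which obtains this corollary simply by restricting the weighted monotonicity corollary to the case where every edge has weight $1$. Your extra observations (identical transition probabilities, the tail-sum check via the unweighted probability corollary, and the infinite case via Theorem~\ref{thm:finite_ept_characterization}) are all valid.
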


\begin{corollary}
Let $G$ and $G'$ be directed graphs on the same vertex set $V$, and let $S \subseteq V$ be a set of initially blue vertices. 
Assume $G'$ is obtained from $G$ by adding some directed edges $(u,w)$ with $u \in S$ (i.e., all added edges originate from vertices in $S$). 
For any set $T \subseteq V$ and any number of rounds $\ell \ge 1$, let $A_G(S,T,\ell)$ be the event that all vertices in $T$ are blue after $\ell$ rounds of RZF on graph $G$ starting from initial blue set $S$. 
Then:
\[
\Pr_{G}\!\big(A_G(S,T,\ell)\big)\;\le\;\Pr_{G'}\!\big(A_{G'}(S,T,\ell)\big)\,.
\]
In other words, adding extra edges out of initially-blue vertices (without changing $S$) can only increase the probability of eventually forcing all vertices in $T$ blue.
\end{corollary}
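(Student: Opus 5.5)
This corollary follows by specializing the weighted edge-increase lemma to $0$/$1$ weights. I will encode each unweighted directed graph $H$ on $V$ as a weighted graph with weight function $W_H(x,w)=1$ if $(x,w)\in E(H)$ and $W_H(x,w)=0$ otherwise. Under this encoding, the weighted RZF probability
\[
p_t(w)=\frac{\sum_{x\in B_t} W_H(x,w)}{\sum_{x\in V} W_H(x,w)}
\]
is exactly the number of blue in-neighbors divided by the in-degree. When the in-degree is $0$, the denominator vanishes and the convention gives probability $0$, matching the unweighted rule \eqref{eq:rzf}. So the unweighted RZF process on $H$ and the weighted RZF process on this encoding have the same law.

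Next I apply the encoding to both $G$ and $G'$. Since $G'$ is obtained from $G$ by adding directed edges $(u,w)$ with $u\in S$, we have the following:
\begin{itemize}
\item $W_{G'}(x,w)=W_G(x,w)$ for every pair with $x\notin S$;
\item for $u\in S$, $W_{G'}(u,w)\ge W_G(u,w)$, since the value either stays the same or changes from $0$ to $1$.
\end{itemize}
This is precisely the hypothesis of the weighted lemma on increasing weights of edges out of initially blue vertices. One small point needs checking: that lemma allows ``edges'' of weight $0$, treating absent edges as weight $0$. An added edge is therefore a weight increase from $0$ to $1$ on the pair $(u,w)$, so the lemma applies verbatim.

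The lemma then gives $\Pr_G(A_G(S,T,\ell))\le\Pr_{G'}(A_{G'}(S,T,\ell))$ for every $T\subseteq V$ and every $\ell\ge1$. Because the two processes have the same laws as their unweighted counterparts, this inequality is the desired statement.

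The only step needing care is the equivalence of laws, including the zero in-degree convention. That is immediate from Definition~\ref{def:rzf-process}, so I expect no real obstacle.
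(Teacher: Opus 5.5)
Your proposal is correct and follows the paper's route. The paper obtains this corollary by specializing the weighted edge-weight-increase lemma to unit weights, and your $0$/$1$ encoding makes that specialization explicit, including the points the paper leaves implicit: an added edge counts as a weight increase from $0$ to $1$ (allowed by the lemma's convention $W_H(x,w)=0$ for absent edges), and the zero-in-degree convention agrees in both models.
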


\begin{corollary}
    Let $G$ and $G'$ be directed graphs on the same vertex set $V$, and let $S \subseteq V$ be a set of initially blue vertices. 
Assume $G'$ is obtained from $G$ by adding some directed edges $(u,w)$ with $u \in S$ (i.e., all added edges originate from vertices in $S$). Then, we have $\eptrzf(G, S) \ge \eptrzf(G', S)$.
\end{corollary}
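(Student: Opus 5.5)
The plan is to derive this from the weighted corollary on increasing edge weights out of initially blue vertices. Encode the unweighted directed graph $G$ as a weighted graph on $V$ with $W_G(x,w)=1$ if $(x,w)$ is an edge of $G$ and $W_G(x,w)=0$ otherwise. Do the same for $G'$. Under this encoding, the weighted RZF rule of Definition~\ref{def:rzf-process} reduces exactly to the unweighted rule \eqref{eq:rzf}. The numerator is the number of blue in-neighbors and the denominator is the in-degree, and a vertex with zero total in-weight has in-degree $0$, so both rules give probability $0$. Hence $\eptrzf$ of the unweighted graph equals $\eptrzf$ of its weighted encoding, for every initial set $S$.

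Next I check the hypothesis of the weighted corollary. Since $G'$ is obtained from $G$ only by adding edges $(u,w)$ with $u\in S$, the weights change only on such edges, and each goes from $0$ to $1$, so $W_{G'}(u,w)\ge W_G(u,w)$. Every other pair $(x,w)$ keeps its weight, because no edge is removed and no edge with tail outside $S$ is added. So the weighted encoding of $G'$ is obtained from that of $G$ by increasing weights on edges whose tail lies in $S$. That is precisely the setting of the preceding weighted corollary, which gives $\eptrzf(G,S)\ge \eptrzf(G',S)$.

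The one point needing care is that the weighted lemma allows weight-$0$ pairs and treats them as non-edges; the proof there defines $W_H(x,w)=0$ when no edge exists, so this is consistent. The case $\eptrzf(G,S)=\infty$ needs no separate treatment, since the inequality is then trivial. Alternatively, I could sum the tail probabilities of the preceding unweighted corollary, exactly as in the proof of the weighted corollary:
\[
\eptrzf(G,S)=\sum_{\ell\ge 0}\bigl(1-\Pr_G(A_G(S,V,\ell))\bigr)\ \ge\ \sum_{\ell\ge0}\bigl(1-\Pr_{G'}(A_{G'}(S,V,\ell))\bigr)=\eptrzf(G',S),
\]
with the $\ell=0$ term handled directly. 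When $S\ne V$ both processes are incomplete at time $0$, so that term equals $1$ on both sides; when $S=V$ both sides are $0$. There is no real obstacle beyond verifying that the encoding is faithful.
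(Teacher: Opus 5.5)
Your proposal is correct and follows the paper's route: the paper obtains this corollary by specializing the weighted corollary on increasing weights out of $S$ to $0/1$ weights, which is exactly your encoding (adding an edge out of $S$ raises its weight from $0$ to $1$). Your separate handling of the $\ell=0$ term in the tail sum and of the infinite case fills in details the paper leaves implicit.
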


\section{Special families of directed graphs}\label{sec:families}

Before turning to specific graph families, it is useful to identify basic
structural transformations that leave the RZF process unchanged. Such
invariances serve two complementary purposes. First, they substantially reduce
the effective parameter space of weighted directed graphs: many distinct weight
assignments induce exactly the same stochastic dynamics, and hence the same
expected propagation behavior. Second, they provide a principled way to
normalize or simplify weights without loss of generality, allowing results for
unweighted or uniformly weighted graphs to be extended immediately to broader
weighted settings.

In particular, because the RZF update rule depends only on \emph{relative}
incoming weights at each vertex, any transformation that preserves these
ratios leaves the process invariant. Lemma~\ref{lem:incoming_scaling} formalizes
this observation, showing that incoming weights may be rescaled independently
at each vertex without affecting the dynamics. Lemma~\ref{lem:constant_incoming}
then identifies an important special case: when all incoming edges to a vertex
have equal weight, weighted RZF coincides exactly with unweighted RZF on the same
underlying directed graph. Together, these lemmas justify treating many weighted
models as essentially unweighted.

\begin{lemma}\label{lem:incoming_scaling}
Let $G$ be a weighted directed graph. Fix a vertex $w$ with
$\sum_{u\in N^-(w)} w_{uw}>0$.
If all incoming edge weights to $w$ are multiplied by the same constant
$\lambda>0$, then the RZF process on $G$ is unchanged.
\end{lemma}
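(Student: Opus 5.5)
The plan is to show that the one-step transition kernel of the Markov chain $(B_t)_{t\ge 0}$ from Definition~\ref{def:rzf-process} is identical for $G$ and for the rescaled graph $G_\lambda$. Here $G_\lambda$ is obtained by replacing $w_{uw}$ with $\lambda w_{uw}$ for every $u$, while leaving all other weights unchanged. Since the law of the whole chain is determined by its initial state $B_0$ and its transition probabilities, identical kernels give equal laws. This covers the entire trajectory, and in particular the propagation time $\tau=\min\{t: B_t=V(G)\}$. Hence $\eptrzf(G,S)=\eptrzf(G_\lambda,S)$ for every $S$, and every event $A(S,T,\ell)$ has the same probability in both graphs.

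The key step is to compare the update probabilities $p_t(x)$ vertex by vertex, for a fixed current blue set $B_t=B$.

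\begin{itemize}
\item For $x\neq w$, none of the incoming weights of $x$ change. So $p_t(x)$ is literally the same expression in $G$ and $G_\lambda$; this includes the zero-denominator convention.
\item For $x=w$, the denominator becomes $\lambda\sum_{u}w_{uw}>0$, which is still positive, so the zero-denominator case cannot newly arise. The numerator becomes $\lambda\sum_{u\in B}w_{uw}$. The factor $\lambda$ cancels, giving $p_t^{G_\lambda}(w)=p_t^{G}(w)$.
\end{itemize}

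Scaling outgoing edges of $w$ is not at issue, because only edges into $w$ are modified. Edges from $w$ to other vertices appear only in those other vertices' in-weights, and those weights are untouched.

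Given equal $p_t(x)$ for all white $x$, the next blue set is formed by independent Bernoulli trials with these parameters. Therefore
\[
\Pr\bigl(B_{t+1}=B'\mid B_t=B\bigr)=\prod_{x\in B'\setminus B}p_t(x)\prod_{x\notin B'}\bigl(1-p_t(x)\bigr)
\]
for $B\subseteq B'$ (and $0$ otherwise), and this is the same in both graphs. One can make this concrete via the same coupling as in the earlier monotonicity lemmas: use a shared $U_t(x)$ for each vertex, and the two processes then coincide pathwise.

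There is no real obstacle here. The only point needing care is the degenerate-denominator convention, and it is handled by the hypothesis $\sum_u w_{uw}>0$ together with $\lambda>0$. Iterating the argument over vertices also gives independent per-vertex rescaling.
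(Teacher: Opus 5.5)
Your proposal is correct and follows the paper's argument: the paper's proof is exactly the cancellation of $\lambda$ in the update ratio at $w$ for every blue set $B$, from which it concludes the process is unchanged. Your extra bookkeeping (the untouched probabilities at $x\neq w$, the positive denominator ruling out the zero-weight convention, and equal transition kernels implying equal laws of $(B_t)$) simply spells out what the paper leaves implicit.
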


\begin{proof}
For any blue set $B$,
\[
\frac{\sum_{u\in B} \lambda w_{uw}}{\sum_{u\in V} \lambda w_{uw}}
=
\frac{\sum_{u\in B} w_{uw}}{\sum_{u\in V} w_{uw}}.
\]
Thus the update probability at $w$ is unchanged in every configuration, and
hence the entire RZF process is unchanged.
\end{proof}

\begin{lemma}\label{lem:constant_incoming}
Let $G$ be a directed graph with edge weights satisfying the following
property: for each vertex $w$, there exists a constant $c_w>0$ such that
\[
w_{uw}=c_w \quad \text{for all } u\in N^-(w).
\]
Then weighted RZF on $G$ coincides with unweighted RZF on the same underlying
directed graph.
\end{lemma}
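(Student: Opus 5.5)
The plan is to compare the one-step update probabilities of the two processes vertex by vertex. Because both processes are driven by the same rule (independent Bernoulli trials whose parameters depend only on the current blue set), equal transition probabilities from every configuration give the same Markov chain.

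Fix a vertex $w$ and a blue set $B\subseteq V(G)$. First consider the case $N^-(w)\neq\emptyset$. By hypothesis every in-edge of $w$ has weight $c_w>0$, so
\[
\sum_{u\in V(G)} w_{uw} \;=\; c_w\,\deg^-(w) \;>\; 0,
\qquad
\sum_{u\in B} w_{uw} \;=\; c_w\,|B\cap N^-(w)|.
\]
The weighted update probability is therefore
\[
p(w) \;=\; \frac{c_w\,|B\cap N^-(w)|}{c_w\,\deg^-(w)} \;=\; \frac{|B\cap N^-(w)|}{\deg^-(w)},
\]
which is exactly the unweighted rule \eqref{eq:rzf}. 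Equivalently, one can apply Lemma~\ref{lem:incoming_scaling} with $\lambda=1/c_w$ at each vertex in turn. This rescales every in-weight to $1$ without changing the process, and the result is literally the unweighted graph.

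Next consider the case $N^-(w)=\emptyset$. Then the total in-weight is $0$ and the in-degree is $0$, so both conventions give update probability $0$.

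The update probabilities therefore agree at every white vertex for every blue set $B$. In both processes each white vertex updates independently with these probabilities and blue vertices stay blue, so the transition kernels on subsets of $V(G)$ coincide. Starting from the same initial set, the two Markov chains thus have the same law. In particular, all probabilities $\Pr(A(S,T,\ell))$ and all values $\eptrzf(G,S)$ agree.

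No real obstacle is expected. The only point needing care is the zero-in-degree case, since the lemma's hypothesis on $c_w$ is vacuous there, and it is handled by the matching conventions.
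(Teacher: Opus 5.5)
Your proof is correct and follows the same route as the paper: the common factor $c_w$ cancels in the update probability at each vertex, so the weighted and unweighted transition rules coincide. Your explicit handling of the zero-in-degree case, and your remark that identical one-step kernels give identical chains, are small additions the paper leaves implicit.
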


\begin{proof}
For any blue set $B$,
\[
\Pr(w\text{ turns blue})
=
\frac{\sum_{u\in B^-(w)} c_w}{\sum_{u\in N^-(w)} c_w}
=
\frac{|B^-(w)|}{|N^-(w)|}.
\]
\end{proof}

\subsection{Deterministic families}

In this subsection we describe families of weighted directed graphs for which the RZF propagation process is deterministic, so that the expected propagation time coincides with the actual propagation time.

\begin{theorem}
\label{thm:arborescences}
Let $G$ be a weighted directed graph that has a distinguished \emph{root} vertex $r$ with $\deg^-(r)=0$ (no incoming edges), and every other vertex has $\deg^- = 1$ (exactly one parent in $G$). If all vertices of $G$ are reachable from $r$, then the RZF propagation process starting from $r$ is deterministic, and the expected propagation time equals the eccentricity of $r$ in $G$ (i.e., the distance from $r$ to the furthest vertex in the directed tree).
\end{theorem}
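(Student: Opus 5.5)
The plan is to prove by induction on $t$ that the blue set is deterministic and equals the closed ball
\[
B_t \;=\; \{\, v\in V(G) : \dist(r,v)\le t \,\},
\]
where $\dist$ is directed distance in $G$. Before starting, I note that reachability must be understood in $G^+$, so the unique in-edge $(\pi(v),v)$ of each $v\neq r$ has $w_{\pi(v),v}>0$. If that weight were $0$, then $v$ could never turn blue, and Theorem~\ref{thm:finite_ept_characterization} would already give $\eptrzf(G,r)=\infty$. Under this convention, every $v\ne r$ has total in-weight $W^-(v)=w_{\pi(v),v}>0$. Its update probability in any round is therefore
\[
p_t(v)=\frac{w_{\pi(v),v}\,\mathbbm{1}[\pi(v)\in B_t]}{w_{\pi(v),v}}=\mathbbm{1}[\pi(v)\in B_t],
\]
which lies in $\{0,1\}$. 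Every Bernoulli trial in Definition~\ref{def:rzf-process} is thus degenerate, and the process is deterministic as soon as $B_0=\{r\}$ is fixed. This is the key structural observation. Alternatively, Lemma~\ref{lem:constant_incoming} reduces to the unweighted case, where in-degree $1$ gives the same conclusion.

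Next I carry out the induction. The base case is $B_0=\{r\}=\{v:\dist(r,v)\le 0\}$. For the step, suppose $B_t$ is the ball of radius $t$, and take a white vertex $v$ outside it. Then $v$ turns blue at round $t+1$ iff $\pi(v)\in B_t$, that is, iff $\dist(r,\pi(v))\le t$. I then need $\dist(r,v)=\dist(r,\pi(v))+1$. This holds because every directed path from $r$ to $v$ must enter $v$ through its only in-edge, so it passes through $\pi(v)$ immediately before $v$. Hence $v\in B_{t+1}$ iff $\dist(r,v)\le t+1$, which completes the induction. Note that $r$ itself never needs updating, since it has in-degree $0$ and is blue from the start.

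Finally, all vertices are reachable and $V(G)$ is finite, so $\ecc(r)=\max_v \dist(r,v)$ is finite. The propagation time $\tau=\min\{t: B_t=V(G)\}$ then equals $\ecc(r)$ deterministically, and hence $\eptrzf(G,r)=\E[\tau]=\ecc(r)$. I expect no serious obstacle. The only delicate points are the positive-weight/reachability convention above and the justification that $\dist(r,v)=\dist(r,\pi(v))+1$. The latter follows from the in-degree-one condition, which also shows that $G$ is genuinely a tree: no cycles can occur, since every vertex is reachable from the source-free root $r$.
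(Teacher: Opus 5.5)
Your proof is correct and follows the same route as the paper: an induction showing that the blue set after $t$ rounds is exactly the set of vertices at directed distance at most $t$ from $r$, because each non-root vertex turns blue with probability $1$ or $0$ according to whether its unique parent is blue. Your remark that the parent edge must have positive weight is a valid refinement the paper uses only implicitly: otherwise the zero-in-weight convention gives $p_t(v)=0$ forever and $\eptrzf(G,r)=\infty$.
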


\begin{proof}
We show that the process is deterministic and that after round $t$ the blue set
is exactly the set of vertices at directed distance at most $t$ from $r$.

For $t\ge 0$, let
\[
L_t:=\{v\in V(G): \dist(r,v)\le t\},
\]
where $\dist(r,v)$ is the length of the unique directed path from $r$ to $v$ in
the arborescence (well-defined since every non-root vertex has indegree $1$).

We claim that for all $t\ge 0$ we have $B_t=L_t$, where $B_t$ is the set of blue
vertices after $t$ rounds.

Base case $t=0$: $B_0=\{r\}=L_0$.

Inductive step: assume $B_t=L_t$ for some $t\ge 0$. Let $v$ be any vertex with
$\dist(r,v)=t+1$. Let $u$ be the unique in-neighbor of $v$ (its parent). Then
$\dist(r,u)=t$, so $u\in L_t=B_t$. Since $v$ has indegree $1$, its total
incoming weight is exactly the weight on the edge $(u,v)$, and that entire
incoming weight comes from a blue vertex at time $t$. Hence the RZF update rule
gives
\[
\Pr(v\in B_{t+1}\mid B_t)=1.
\]
Thus every vertex in $L_{t+1}\setminus L_t$ becomes blue at round $t+1$, so
$L_{t+1}\subseteq B_{t+1}$.

Conversely, if $\dist(r,v)\ge t+2$, then its parent $u$ satisfies
$\dist(r,u)\ge t+1$, hence $u\notin L_t=B_t$, so $v$ has no blue in-neighbor at
time $t$ and therefore
\[
\Pr(v\in B_{t+1}\mid B_t)=0.
\]
Thus no vertex outside $L_{t+1}$ can turn blue at round $t+1$, giving
$B_{t+1}\subseteq L_{t+1}$.

Therefore $B_{t+1}=L_{t+1}$, completing the induction.

It follows that the propagation time is deterministic and equals the least
$t$ such that $L_t=V(G)$, i.e.\ $\ecc(r)$. Hence $\eptrzf(G,r)=\ecc(r)$.
\end{proof}

\begin{corollary}
\label{cor:kary-tree}
For the weighted complete unidirectional $k$-ary tree of depth $n$, the expected propagation time (starting with the root blue) is exactly $n$. In particular, although each internal node has $k$ children, the propagation process is deterministic and terminates in $n$ rounds.
\end{corollary}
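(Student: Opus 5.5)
The plan is to derive Corollary~\ref{cor:kary-tree} directly from Theorem~\ref{thm:arborescences}. That means checking its three hypotheses for the weighted complete unidirectional $k$-ary tree $T_{k,n}$ of depth $n$, and then computing the eccentricity of the root.

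First I fix the model. $T_{k,n}$ has vertex set consisting of the root $r$ at level $0$ and $k^i$ vertices at each level $i=1,\dots,n$. Every vertex at level $i<n$ has exactly $k$ children at level $i+1$, and all edges are directed from parent to child with arbitrary positive weights. With this model the hypotheses are immediate:
\begin{itemize}
\item $\deg^-(r)=0$, since no edge points into the root;
\item every non-root vertex has exactly one in-edge, namely the one from its parent;
\item every vertex is reachable from $r$, by following the parent-to-child edges down the levels.
\end{itemize}
Positivity of the weights matters only for reachability. If a weight could be zero, the RZF rule would still assign probability $1$ when the parent is blue only if the incoming total is positive. So I will state that the weights are positive, which is implicit in ``weighted tree''.

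Theorem~\ref{thm:arborescences} then says the process from $r$ is deterministic and that $\eptrzf(T_{k,n},r)=\ecc(r)$. A vertex at level $i$ is at directed distance exactly $i$ from $r$, because the unique directed path to it descends one level per edge. Hence $\ecc(r)=n$, attained at the leaves. The proof of the theorem also identifies $B_t$ as the set of vertices of level at most $t$, so the process terminates at exactly round $n$ regardless of $k$.

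There is no real obstacle. The only point needing care is the caveat about positive weights, or equivalently that each non-root vertex has nonzero incoming total. This ensures the parent edge carries the full incoming weight and the update probability is $1$.
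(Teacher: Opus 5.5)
Your proposal is correct and matches the paper's proof: you check that the unidirectional $k$-ary tree is an arborescence rooted at $r$ with every vertex reachable, apply Theorem~\ref{thm:arborescences}, and observe that $\ecc(r)=n$ because every leaf lies at depth $n$. Your caveat that the edge weights must be positive is a fair point the paper leaves implicit in both the theorem and the corollary. Positivity is what makes each non-root vertex's incoming total nonzero, so that it turns blue with probability $1$ once its parent is blue.
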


\begin{proof}
The unidirectional $k$-ary tree of depth $n$ is a directed arborescence in which each non-root vertex has indegree 1 and the longest root-leaf path has length $n$. By Theorem~\ref{thm:arborescences}, the propagation finishes after $n$ rounds. (In this tree, every leaf lies at distance $n$ from the root, so the root’s eccentricity is $n$.) 
\end{proof}

\subsection{Stars}

Let $S_m$ be the bidirected star with center $c$ and leaves
$L=\{\ell_1,\dots,\ell_m\}$.  
Assume edges $(\ell_i,c)$ have weights $a_i>0$ and edges $(c,\ell_i)$ have
weights $b_i>0$.

\begin{remark}
If $v$ denotes the center vertex of $S_m$, then $\eptrzf(\overleftrightarrow{K_{1,n-1}},v) = 1$.
\end{remark}

\begin{theorem}\label{thm:weighted_star}
Let $S\subseteq L$ be a nonempty set of leaves, and assume that $c$ is not
initially blue. Then
\[
\eptrzf(S_m,S)
=
1+\frac{\sum_{i=1}^m a_i}{\sum_{\ell_i\in S} a_i}.
\]
In particular, for a single leaf $\ell_j$,
\[
\eptrzf(S_m,\{\ell_j\})
=
1+\frac{\sum_{i=1}^m a_i}{a_j}.
\]
\end{theorem}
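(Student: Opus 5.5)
The idea is to track the blue set through its phases. Start from $B_0=S\subseteq L$ with $c$ white.

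Every leaf $\ell_i$ has exactly one in-neighbor, namely $c$, through the edge $(c,\ell_i)$ of weight $b_i>0$. So a white leaf turns blue with probability $0$ while $c$ is white, and with probability $1$ in the round right after $c$ becomes blue; this is the same mechanism as in Theorem~\ref{thm:arborescences}. The center $c$ has in-neighbors exactly $L$, with weights $a_i$. Since no leaf outside $S$ can turn blue while $c$ is white, the blue set stays equal to $S$ until $c$ turns blue. During that phase, $c$ therefore turns blue in each round independently with the constant probability
\[
p=\frac{\sum_{\ell_i\in S}a_i}{\sum_{i=1}^m a_i}>0 ,
\]
using the independent Bernoulli trials of Definition~\ref{def:rzf-process}.

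Let $T(c)$ be the round in which $c$ turns blue. Then $T(c)$ is geometric with success probability $p$, so
\[
\E[T(c)]=\frac1p=\frac{\sum_{i=1}^m a_i}{\sum_{\ell_i\in S}a_i}.
\]
In round $T(c)+1$ every remaining white leaf has all of its incoming weight coming from the blue vertex $c$, so it turns blue with probability $1$.

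We next determine the propagation time $\tau$. If $S\neq L$, some leaf is still white at time $T(c)$, so $\tau=T(c)+1$ exactly, and taking expectations gives the formula. If $S=L$, then $p=1$ and $c$ turns blue in round $1$, so $\tau=1$, while the formula gives $2$. So the statement implicitly requires $S\neq L$, and I would state that hypothesis explicitly (or note the exception). For a single leaf it holds automatically whenever $m\ge 2$.

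The only points needing care are two. The first is justifying that the success probability of $c$ is constant before it turns blue, which rests on the observation that leaves cannot turn blue while $c$ is white. The second is this boundary case $S=L$. Everything else is the standard geometric expectation. The single-leaf formula is the special case $S=\{\ell_j\}$.
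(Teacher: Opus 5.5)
Your argument is the same as the paper's. The blue set stays equal to $S$ until $c$ turns blue, $T(c)$ is geometric with parameter $p$, and one more deterministic round colors the remaining leaves. Your caveat is also correct and points to a genuine, if minor, defect in the stated theorem: when $S=L$ (in particular a single leaf with $m=1$) the propagation time is exactly $1$, not $2$, because the paper's final ``$+1$'' round assumes some leaf outside $S$ is still white.
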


\begin{proof}
While $c$ is white, no leaf outside $S$ has a blue in-neighbor, so the blue
set remains exactly $S$. In each round, the probability that $c$ turns blue is
\[
p=\frac{\sum_{\ell_i\in S} a_i}{\sum_{i=1}^m a_i},
\]
which is constant across rounds. Hence the time $T_c$ until $c$ becomes blue
is geometric with $\E[T_c]=1/p$.

Once $c$ becomes blue, every remaining leaf $\ell_i\notin S$ has exactly one
in-neighbor, namely $c$, and therefore turns blue with probability $1$ in the
next round. Thus
\[
\eptrzf(S_m,S)=\E[T_c]+1
=
1+\frac{\sum_{i=1}^m a_i}{\sum_{\ell_i\in S} a_i}.
\]
\end{proof}

\begin{corollary}\label{cor:best_leaf}
Among all single-leaf starting vertices, $\eptrzf(S_m,\{\ell_j\})$ is minimized
when $a_j$ is maximal.
\end{corollary}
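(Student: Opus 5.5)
The plan is to read the corollary directly off the closed form in Theorem~\ref{thm:weighted_star}. For a single leaf $\ell_j$ that theorem gives
\[
\eptrzf(S_m,\{\ell_j\}) \;=\; 1+\frac{A}{a_j}, \qquad A:=\sum_{i=1}^m a_i .
\]
Theorem~\ref{thm:weighted_star} applies here because $\{\ell_j\}$ is a nonempty set of leaves and the center $c$ is not initially blue.

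The key observation is that the numerator $A$ is the same for every choice of $j$. It is the total incoming weight at $c$, and it depends only on the star, not on which leaf starts blue. Since all $a_i>0$, the quantity $1+A/a_j$ is a strictly decreasing function of $a_j$ on $(0,\infty)$.

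To make the comparison explicit, I will fix two indices $j,k$ with $a_j\ge a_k$. Then $A/a_j\le A/a_k$, so
\[
\eptrzf(S_m,\{\ell_j\})\le \eptrzf(S_m,\{\ell_k\}).
\]
Consequently, taking $j$ with $a_j=\max_i a_i$ minimizes the single-leaf expected propagation time. Equality holds exactly when $a_k=a_j$, so every maximizer of $a_j$ is a minimizer and vice versa.

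There is no real obstacle. The only points to state carefully are that $A>0$ and $a_j>0$, so the expression is finite and the monotonicity is strict, and that the comparison is restricted to leaf starts. The center start, with value $1$ by the preceding remark, is excluded by the phrase ``single-leaf starting vertices.''
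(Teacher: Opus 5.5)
Your proposal is correct and matches the paper's argument, which also reads the corollary directly off the closed form $1+\sum_i a_i/a_j$ from Theorem~\ref{thm:weighted_star}, using that the numerator does not depend on $j$. One harmless caveat: the ``$+1$'' in that formula requires some leaf other than $\ell_j$ to still be white when $c$ turns blue, i.e.\ $m\ge 2$; for $m=1$ there is only one single-leaf start, so the claim holds trivially.
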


\begin{proof}
Immediate from Theorem~\ref{thm:weighted_star}.
\end{proof}

\begin{corollary}
If $S_m$ has all edges of equal weight and $v$ is a leaf vertex of $S_m$, $\eptrzf(S_m,v) = m+1$.
\end{corollary}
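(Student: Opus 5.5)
This is a direct specialization of Theorem~\ref{thm:weighted_star}, so the plan is simply to check that its hypotheses hold and then substitute. First, by Lemma~\ref{lem:constant_incoming}, giving all edges equal weight makes weighted RZF on $S_m$ coincide with unweighted RZF on the bidirected star. We may therefore work with the weighted formula and set $a_i=b_i=1$ for all $i$. By Lemma~\ref{lem:incoming_scaling}, any common positive weight gives the same answer, so there is no loss of generality in using $1$.

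Next, take $S=\{v\}$, where $v=\ell_j$ is the chosen leaf. This set is nonempty and does not contain the center $c$, so the single-leaf case of Theorem~\ref{thm:weighted_star} applies and gives
\[
\eptrzf(S_m,\{\ell_j\}) = 1+\frac{\sum_{i=1}^m a_i}{a_j} = 1+\frac{m}{1} = m+1 .
\]

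As a sanity check, the direct argument gives the same value. While $c$ is white it turns blue each round with probability $1/m$, so the expected waiting time is $m$ rounds. After $c$ turns blue, all remaining leaves turn blue deterministically in one more round, for a total of $m+1$. The only point needing care is that $c\notin S$, which holds because $v$ is a leaf; there is no genuine obstacle here.
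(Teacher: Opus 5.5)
Your route is the paper's: the corollary is the single-leaf case of Theorem~\ref{thm:weighted_star} with all $a_i$ equal, giving $1+m$. The detours through Lemmas~\ref{lem:constant_incoming} and~\ref{lem:incoming_scaling} are harmless but unnecessary. The theorem is already stated for arbitrary positive weights, and $\sum_i a_i/a_j=m$ for any common value $a_i=a$.

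There is, however, one case where both your argument and the statement itself break: $m=1$. Your sanity check says that after $c$ turns blue the remaining leaves need one more round. That extra round exists only if some leaf lies outside the starting set. For $S_1$ started at $\ell_1$, the center's only in-neighbor is $\ell_1$, so $c$ turns blue in round $1$ with probability $1$ and the process is already finished. Hence $\eptrzf(S_1,\ell_1)=1$, not $2$.

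The same defect sits in Theorem~\ref{thm:weighted_star} whenever $S=L$, where the true value is $1$, and the paper does not flag it either. So the point needing care is not only $c\notin S$ but also $S\neq L$. With the hypothesis $m\ge 2$ added, your proof is complete.
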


\subsection{Paths}

We next analyze the RZF propagation process on weighted bidirectional paths, where the linear structure permits an exact computation of the expected propagation time.

\begin{thm}
Let $P$ be a bidirectional path of order $n$ with endpoints $v_1, v_n$. Suppose that there are edges from $v_i$ to $v_{i+1}$ and edges from $v_{i+1}$ to $v_{i}$ for all $1 \le i \le n-1$. We denote weight on the edge from $v_{i-1}$ to $v_{i}$ as $l_i$ and the edge from $v_{i+1}$ to $v_i$ as $r_i$, then \[\eptrzf(P, v_1) = \sum_{i = 2}^{n-1} \frac{l_i+r_i}{l_i}+1\] and \[\eptrzf(P, v_n) = \sum_{i = 2}^{n-1} \frac{l_i+r_i}{r_i}+1\].
\end{thm}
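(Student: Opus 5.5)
Starting from $v_1$, the blue set is always an initial segment $\{v_1,\dots,v_k\}$. I will prove this by induction on rounds. If the blue set is $\{v_1,\dots,v_k\}$, then any white vertex $v_j$ with $j\ge k+2$ has in-neighbors $v_{j-1}$ and $v_{j+1}$ (or only $v_{n-1}$ when $j=n$), and none of them is blue. So its update probability is $0$, and only $v_{k+1}$ can turn blue in the next round. Hence $B_t$ is always a prefix, and the process moves forward at most one vertex per round.

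Next I compute the waiting time at each step. Suppose the blue set is exactly $\{v_1,\dots,v_{i-1}\}$ with $2\le i\le n-1$. The in-neighbors of $v_i$ are $v_{i-1}$ (blue, weight $l_i$) and $v_{i+1}$ (white, weight $r_i$). By Definition~\ref{def:rzf-process}, $v_i$ turns blue in each round independently with constant probability $p_i=l_i/(l_i+r_i)$. This probability stays constant while we wait, because by the previous paragraph nothing else changes during that time. So the waiting time $T_i$ is geometric with mean $(l_i+r_i)/l_i$.

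For $i=n$, the vertex $v_n$ has the single in-neighbor $v_{n-1}$, so $p_n=1$ and $T_n=1$. This is the same mechanism as in Theorem~\ref{thm:arborescences} and in the last step of Theorem~\ref{thm:weighted_star}.

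The propagation time is $\sum_{i=2}^n T_i$. By linearity of expectation,
\[
\eptrzf(P,v_1)=\sum_{i=2}^{n-1}\frac{l_i+r_i}{l_i}+1 .
\]
For $n=2$ the sum is empty and the value is $1$, which is consistent. The formula for $v_n$ follows by applying the same argument to the reversed labeling $v_i\mapsto v_{n+1-i}$, which interchanges the roles of $l_i$ and $r_i$.

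The one point needing care is the prefix invariant: it is what makes the success probability at the frontier constant and the $T_i$ well-defined geometric variables. Since the invariant is deterministic (the excluded vertices have probability exactly $0$), it suffices. I assume all weights are positive, as implicit in the formula; otherwise the expression is undefined, or by Theorem~\ref{thm:finite_ept_characterization} the value is infinite.
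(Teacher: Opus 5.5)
Your proposal is correct and follows essentially the same route as the paper. Both arguments use the prefix invariant to force the coloring order $v_1,\dots,v_n$, give each frontier vertex $v_i$ ($2\le i\le n-1$) a geometric waiting time with success probability $l_i/(l_i+r_i)$ and $v_n$ exactly one round, and finish with linearity of expectation plus the reversed labeling for the $v_n$ formula.
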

\begin{proof}
We prove the statement for the start vertex $v_1$; the case $v_n$ is symmetric.

For $2\le i\le n$, let $T_i$ be the (random) number of additional rounds needed
to turn $v_i$ blue after $v_{i-1}$ has become blue, assuming that none of
$v_i,\dots,v_n$ is blue yet. We first justify that this describes the process:
on a bidirected path starting from $v_1$, at any time before $v_i$ turns blue,
the only vertex among $\{v_i,\dots,v_n\}$ that can have a blue in-neighbor is
$v_i$ itself (its only possible blue in-neighbor is $v_{i-1}$). Hence the
process colors vertices in the order $v_1,v_2,\dots,v_n$ almost surely.

Fix $2\le i\le n-1$. Once $v_{i-1}$ is blue and $v_i$ is still white, the blue
in-weight into $v_i$ equals the weight on $(v_{i-1},v_i)$, namely $l_i$, while
the total in-weight into $v_i$ equals $l_i+r_i$. Therefore in each subsequent
round, independently across rounds, $v_i$ becomes blue with probability
\[
p_i=\frac{l_i}{l_i+r_i}.
\]
Thus $T_i\sim \Geom(p_i)$ and $\E[T_i]=1/p_i=(l_i+r_i)/l_i$.

For $i=n$, the endpoint $v_n$ has indegree $1$ (its only in-neighbor is
$v_{n-1}$), so once $v_{n-1}$ is blue we have $\Pr(v_n\text{ turns blue next
round})=1$, hence $\E[T_n]=1$.

By linearity of expectation,
\[
\eptrzf(P,v_1)=\E[T_2+\cdots+T_n]
=\sum_{i=2}^{n-1}\frac{l_i+r_i}{l_i}+1,
\]
as claimed.
\end{proof}

\begin{corollary} \label{thm:bi_path}
    If $P$ is a bidirectional path graph with all weights equal and $v$ is an endpoint of $P$, then $\eptrzf(P,v)=2n-3$.
\end{corollary}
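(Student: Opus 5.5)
This is a direct specialization of the preceding theorem on weighted bidirectional paths. Take all edge weights equal to a common value $c>0$, so $l_i=r_i=c$ for every $2\le i\le n-1$. Lemma~\ref{lem:constant_incoming} tells us this weighted process coincides with unweighted RZF on the bidirected path, so the corollary, stated for the equal-weight case, is covered either way.

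Substituting into the formula for $\eptrzf(P,v_1)$, each summand is
\[
\frac{l_i+r_i}{l_i}=\frac{2c}{c}=2 .
\]
The sum runs over $i=2,\dots,n-1$, which is $n-2$ terms, so
\[
\eptrzf(P,v_1)=2(n-2)+1=2n-3 .
\]
The endpoint $v_n$ is handled the same way using the companion formula with $r_i$ in the denominator. That gives the same value by symmetry, since every summand is again $2$.

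The only thing to check is the degenerate small cases, where the earlier theorem's proof implicitly assumed $n\ge 2$ (for $n=1$ the value is $0$ and the formula would give $-1$). For $n=2$ the sum is empty, and the formula gives $1$. This is correct: $v_2$ has in-degree $1$ and turns blue in one round. So the statement holds for all $n\ge 2$, and I would note that restriction. There is no real obstacle beyond this bookkeeping.
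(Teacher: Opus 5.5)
Your proposal is correct and follows the paper's intended route: the corollary is an immediate specialization of the weighted-path theorem with $l_i=r_i$. That gives $n-2$ summands equal to $2$ plus the final deterministic round, so $2(n-2)+1=2n-3$. Your caveat that the statement needs $n\ge 2$ (the formula would give $-1$ for $n=1$) is a fair point that the paper leaves implicit.
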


\subsection{Cycles}

Let $\overleftrightarrow{C_n}$ be the bidirected cycle on vertices
$v_0,v_1,\dots,v_{n-1}$ (indices modulo $n$). Assume each clockwise edge
$(v_i,v_{i+1})$ has weight $p>0$ and each counterclockwise edge $(v_i,v_{i-1})$
has weight $q>0$. Thus every vertex has total in-weight $p+q$.

\begin{theorem}\label{thm:weighted_cycle_rec}
Let the initial blue set consist of $k\ge 1$ consecutive vertices in
$\overleftrightarrow{C_n}$. Then the expected propagation time satisfies
\[
\eptrzf(\overleftrightarrow{C_n},B_0)= n-k.
\]
In particular, the expected propagation time is independent of the weights
$p$ and $q$.
\end{theorem}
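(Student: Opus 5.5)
The plan is to track the number of white vertices and show that it decreases by exactly $1$ in expectation each round. Optional stopping then gives the result.

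\textbf{Step 1 (the blue set stays an arc).} I would first show by induction on $t$ that $B_t$ is always a set of consecutive vertices. Write the blue arc as $v_a,v_{a+1},\dots,v_b$ and let $m_t=n-|B_t|$ be the number of white vertices.
\begin{itemize}
\item A white vertex not adjacent to the arc has both in-neighbours white, so its update probability is $0$.
\item If $m_t\ge 2$, the only vertices that can change are the two frontier vertices $v_{b+1}$ and $v_{a-1}$, which are distinct.
\item The clockwise frontier $v_{b+1}$ receives blue in-weight only along $(v_b,v_{b+1})$, so it turns blue with probability $s:=p/(p+q)$.
\item The counterclockwise frontier $v_{a-1}$ receives blue in-weight only along $(v_a,v_{a-1})$, so it turns blue with probability $r:=q/(p+q)$.
\item The two updates are independent, and any union of the arc with some of its frontier vertices is again an arc. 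This completes the induction.
\item If $m_t=1$, the unique white vertex has both in-neighbours blue, so it turns blue with probability $(p+q)/(p+q)=1$.
\end{itemize}

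\textbf{Step 2 (drift of the white count).} From Step 1, while $m_t\ge 1$,
\[
\E[m_t-m_{t+1}\mid \mathcal F_t]=1 .
\]
For $m_t\ge 2$ this equals $s+r=1$. For $m_t=1$ the decrease is exactly $1$. So $M_t:=m_{t\wedge\tau}+(t\wedge\tau)$ is a martingale, where $\tau$ is the propagation time. Its increments are bounded, since $|m_t-m_{t+1}|\le 2$.

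\textbf{Step 3 (optional stopping).} Every vertex is reachable from $B_0$ in $G^+$, so Theorem~\ref{thm:finite_ept_characterization} gives $\E[\tau]<\infty$. With bounded increments and an integrable stopping time, optional stopping applies, giving $\E[m_\tau+\tau]=m_0=n-k$. Since $m_\tau=0$, we get $\eptrzf(\overleftrightarrow{C_n},B_0)=\E[\tau]=n-k$, which is independent of $p$ and $q$.

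\textbf{Main obstacle.} The main obstacle is the bookkeeping in Step 1 for the boundary cases, in particular $m_t=2$, where the two white vertices are each other's neighbours and may both turn blue in one round. I expect the drift computation still to hold there: $\E[m_t-m_{t+1}\mid\mathcal F_t]=s+r=1$ regardless. As a sanity check, I would verify the case $n-k=2$ directly by first-step analysis. Writing $E$ for the expected time from two white vertices, neither vertex turns blue with probability $rs$ and at least one does otherwise, so $E=1+(1-2rs)+rsE$. This gives $E=2$, matching the formula.
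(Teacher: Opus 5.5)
Your proof is correct, but it takes a different route from the paper's.

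\textbf{The paper's route.} The paper treats the number of blue vertices as a Markov chain. It uses the independence of the two boundary updates to compute the full one-step law from each state $m\le n-2$: the chain stays put with probability $ab$, gains two with probability $ab$, and gains one with probability $1-2ab$, where $a=p/(p+q)$ and $b=q/(p+q)$. It then writes the hitting-time recurrence $T_m=1+ab\,T_m+(1-2ab)T_{m+1}+ab\,T_{m+2}$ and checks that $T_m=n-m$ satisfies it, together with $T_{n-1}=1$ and $T_n=0$.

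\textbf{Your route.} You use only the first moment of the one-step decrease. The expected number of newly blue vertices is $s+r=1$ in every non-absorbing state, so $m_{t\wedge\tau}+(t\wedge\tau)$ is a martingale, and optional stopping gives $\E[\tau]=n-k$ directly.

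\textbf{What each buys.} Your argument isolates the actual reason the answer does not depend on $p$ and $q$. It also does not need the two boundary updates to be independent, since linearity of expectation suffices. The paper's check succeeds for the same underlying reason: any one-step law whose increment has mean $1$ makes $n-m$ solve the recurrence.

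The paper's approach is more elementary, since it uses no martingale theory. It also exposes the full transition structure, which you would need for variance or distributional questions. However, its last step (``satisfies the recurrence and boundary conditions, so it is the correct expected hitting time'') tacitly assumes two things. Each $T_m$ must be finite, and $ab<1$ is needed so the recurrence determines $T_m$ by backward induction. Your optional-stopping step, by contrast, is fully justified by $\E[\tau]<\infty$ and bounded increments.

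\textbf{A simplification.} You can drop the appeal to Theorem~\ref{thm:finite_ept_characterization}. For each fixed $t$, the martingale property gives $\E[t\wedge\tau]=n-k-\E[m_{t\wedge\tau}]\le n-k$. Monotone convergence then yields $\E[\tau]\le n-k$, so $\tau<\infty$ almost surely. Bounded convergence gives $\E[m_{t\wedge\tau}]\to 0$, and hence $\E[\tau]=n-k$.

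Your sanity check for the case of two white vertices is also correct.
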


\begin{proof}
Because the initial blue set is a contiguous arc on the cycle and a white
vertex can only turn blue if it has at least one blue in-neighbor, the blue
set remains a single contiguous block for all time. Let $X_t$ be the number of
blue vertices after round $t$; then $(X_t)$ is a Markov chain on
$\{k,k+1,\dots,n\}$ with absorption at $n$.

Fix a state $m$ with $k\le m\le n-2$. Exactly two white vertices are adjacent
to the blue block (one on each side). Call them $w_L$ and $w_R$. Vertex $w_L$
has exactly one blue in-neighbor entering it, and that in-edge has weight $p$
(or $q$ depending on orientation); similarly $w_R$ has exactly one blue
in-neighbor entering it with the opposite weight. Concretely, one boundary
white vertex turns blue with probability
\[
a:=\frac{p}{p+q},
\qquad
\text{and the other turns blue with probability}
\qquad
b:=\frac{q}{p+q}.
\]
Under the RZF rule, distinct white vertices update independently each round,
so the events $\{w_L\text{ turns blue}\}$ and $\{w_R\text{ turns blue}\}$ are
independent.

Hence, writing $r:=ab=\dfrac{pq}{(p+q)^2}$, the one-step transitions from state
$m$ are:
\[
\Pr(X_{t+1}=m)=\Pr(\text{neither boundary turns})=(1-a)(1-b)=ab=r,
\]
\[
\Pr(X_{t+1}=m+2)=\Pr(\text{both boundaries turn})=ab=r,
\]
\[
\Pr(X_{t+1}=m+1)=1-2r.
\]
When $m=n-1$, the unique remaining white vertex has both in-neighbors blue, so
it turns blue with probability $1$, i.e. the chain moves from $n-1$ to $n$ in
one round deterministically.

Now let $T_m$ denote the expected additional number of rounds to absorption
(starting from $X_t=m$). Then $T_n=0$, $T_{n-1}=1$, and for $k\le m\le n-2$ the
law of total expectation gives the standard hitting-time recurrence
\[
T_m
=
1
+\Pr(m)T_m
+\Pr(m+1)T_{m+1}
+\Pr(m+2)T_{m+2},
\]
i.e.
\begin{equation}\label{eq:Tm-recur}
T_m
=
1
+rT_m
+(1-2r)T_{m+1}
+rT_{m+2}.
\end{equation}
Equivalently,
\[
(1-r)T_m
=
1
+(1-2r)T_{m+1}
+rT_{m+2}.
\]

We claim that
\[
T_m = n-m\qquad \text{for all } m\in\{k,k+1,\dots,n\}.
\]
This is immediate for $m=n$ and $m=n-1$. For $k\le m\le n-2$, substitute
$T_m=n-m$, $T_{m+1}=n-(m+1)$, $T_{m+2}=n-(m+2)$ into \eqref{eq:Tm-recur}. The
right-hand side becomes
\[
1+r(n-m)+(1-2r)(n-m-1)+r(n-m-2).
\]
Let $d:=n-m$. Then this equals
\[
1+ r d + (1-2r)(d-1) + r(d-2)
=
1 + \bigl(r+(1-2r)+r\bigr)d - \bigl((1-2r)+2r\bigr)
=
1 + d - 1
=
d,
\]
which matches the left-hand side $T_m=d$. Thus $T_m=n-m$ satisfies the
recurrence and boundary conditions, so it is the correct expected hitting
time.

Therefore, starting from $X_0=k$ we have $\E(\tau)=T_k=n-k$.
\end{proof}

\begin{corollary}
\label{prop:directed-cycle}
Consider a directed cycle graph on $n$ vertices with all edges oriented consistently in one direction (forming a directed cycle) and all weights equal. If one vertex is initially blue, then the propagation time is exactly $n-1$ (deterministic). 
\end{corollary}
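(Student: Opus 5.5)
The plan is to realize the directed cycle as a special case of Theorem~\ref{thm:arborescences}, or, equivalently, to run the same layer-by-layer induction directly. Label the vertices $v_0,v_1,\dots,v_{n-1}$ so that the edges are $(v_i,v_{i+1})$ with indices taken mod $n$, and let $v_0$ be the initially blue vertex. Every vertex has in-degree exactly $1$: the unique in-neighbor of $v_i$ is $v_{i-1}$. Consequently each update probability is either $0$ or $1$, which is the source of determinism. By Lemma~\ref{lem:constant_incoming} (or simply because in-degrees are $1$), the common weight does not matter.

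The cycle is not literally an arborescence, since $v_0$ has the in-edge $(v_{n-1},v_0)$. Because $v_0$ starts blue, however, that edge never affects the process. So I would not invoke Theorem~\ref{thm:arborescences} verbatim. Instead I would prove by induction on $t$ that $B_t=\{v_0,\dots,v_{\min(t,n-1)}\}$.

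\emph{Base case.} For $t=0$ we have $B_0=\{v_0\}$, which is immediate.

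\emph{Inductive step.} Suppose $B_t=\{v_0,\dots,v_t\}$ with $t\le n-2$.
\begin{itemize}
\item The vertex $v_{t+1}$ has its sole in-neighbor $v_t$ blue, so its update probability is $w/w=1$ and it turns blue at round $t+1$.
\item Any other white vertex $v_j$ with $t+2\le j\le n-1$ has its in-neighbor $v_{j-1}$ white, so it turns blue with probability $0$.
\end{itemize}
Hence $B_{t+1}=\{v_0,\dots,v_{t+1}\}$ almost surely.

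It follows that $B_{n-1}=V$ and $B_{n-2}\neq V$ (for $n\ge 2$). The propagation time is therefore exactly $n-1$, and so is its expectation.

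As a consistency check, the claim also follows from Theorem~\ref{thm:weighted_cycle_rec} with $k=1$, read in the degenerate one-directional case: the expected time is $n-k=n-1$. That theorem, however, assumes $p,q>0$, so the direct induction is the rigorous route.

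There is no real obstacle here. The only subtle point is justifying that the back-edge into the start vertex is harmless, i.e., that starting blue makes $v_0$'s in-edge irrelevant. This is handled explicitly by the induction, since blue vertices never change.
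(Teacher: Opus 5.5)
Your proof is correct, but it is not quite the route the paper takes.

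The paper gives no separate argument for this corollary. It sits directly after Theorem~\ref{thm:weighted_cycle_rec} and is meant to be read off from it with $k=1$ and one orientation weight set to zero. Then $a=1$, $b=0$ and $r=ab=0$, so the chain $X_t$ moves $m\to m+1$ with probability one, giving $T_1=n-1$. As you observe, this specialization lies formally outside the theorem's hypothesis $p,q>0$. Also, the theorem's statement only yields the expectation; the word ``deterministic'' comes from looking back into its proof and noticing $r=0$.

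Your induction $B_t=\{v_0,\dots,v_{\min(t,n-1)}\}$ is the argument of Theorem~\ref{thm:arborescences} transplanted to the cycle. It avoids both issues and gives the almost-sure statement directly. What it gives up is the link to the bidirected family, where the same formula $n-k$ holds for all $p,q>0$.

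You could in fact have invoked Theorem~\ref{thm:arborescences} directly. Since $v_0$ is blue from the start, its in-edge $(v_{n-1},v_0)$ never enters any update probability. So the process coincides with RZF on $\overrightarrow{P_n}$ rooted at $v_0$, an arborescence with $\ecc(v_0)=n-1$.
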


\begin{corollary}
\label{prop:bidirected-cycle}
Consider a bidirectional cycle graph on $n$ vertices with all weights equal. If one vertex is initially blue, then the propagation time is exactly $n-1$. 
\end{corollary}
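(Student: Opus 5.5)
The plan is to obtain the statement as the special case $k=1$ of Theorem~\ref{thm:weighted_cycle_rec}. A bidirectional cycle with all weights equal is the weighted bidirected cycle $\overleftrightarrow{C_n}$ with $p=q$. A single initially blue vertex is a contiguous arc of $k=1$ consecutive vertices. Theorem~\ref{thm:weighted_cycle_rec} then gives $\eptrzf(\overleftrightarrow{C_n},v)=n-1$ directly.

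The theorem is stated with arbitrary positive $p,q$, so the equal-weight case needs no normalization. If one prefers to view the graph as unweighted, Lemma~\ref{lem:constant_incoming} identifies unweighted RZF on $\overleftrightarrow{C_n}$ with weighted RZF in which every in-edge has weight $1$. That is the case $p=q=1$.

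The one point to handle carefully is the phrase ``propagation time is exactly $n-1$.'' For $p=q$ the process is \emph{not} deterministic. From a block of $m\le n-2$ blue vertices, each boundary white vertex turns blue with probability $1/2$ independently, so the block can grow by $0$, $1$, or $2$ in a round. I will therefore read the statement as $\eptrzf=n-1$, which is exactly what the hitting-time recurrence in Theorem~\ref{thm:weighted_cycle_rec} yields. I will also remark that the random time $\tau$ is nondegenerate, even though its mean equals that of the directed cycle of Corollary~\ref{prop:directed-cycle}.

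Small cases need only a brief check. For $n=2$, the other vertex has its only in-neighbor blue and turns blue in one round. For $n=3$, the chain starts at $m=1=n-2$, and the recurrence $T_1=1+rT_1+(1-2r)T_2+rT_3$ with $T_2=1$, $T_3=0$, $r=1/4$ gives $T_1=2$. Both agree with $n-1$. No further obstacle is expected.
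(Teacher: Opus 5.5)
Your proposal is correct and matches how the paper obtains this corollary: it is the specialization $k=1$, $p=q$ of Theorem~\ref{thm:weighted_cycle_rec}, and the paper states it without a separate proof. You are also right to read ``exactly $n-1$'' as $\eptrzf=n-1$, because for $p=q$ and $n\ge 3$ the propagation time is genuinely random, unlike for the directed cycle of Corollary~\ref{prop:directed-cycle}.
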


\subsection{Balanced spiders}

In this subsection we restrict attention to spider graphs whose legs all have the same length, which allows a clean asymptotic analysis of the RZF propagation time.

\begin{thm}
Fix an integer $k > 1$ and let $S$ be a bidirectional spider graph with $k$ legs each of length $n$ and all weights equal. Then $\eptrzf(S) = 2n(1+o(1))$, where the $o(1)$ is with respect to $n$.
\end{thm}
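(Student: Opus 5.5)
We will prove matching bounds $2n-1\le \eptrzf(S)\le 2n+O(\sqrt n)$, where the constant in the $O(\sqrt n)$ depends only on $k$. Throughout, write $c$ for the center, and label the $j$-th leg $u^{(j)}_1,\dots,u^{(j)}_n$, with $u^{(j)}_1$ adjacent to $c$ and $u^{(j)}_n$ a leaf. All weights are equal, so by Lemma~\ref{lem:constant_incoming} we may work with unweighted RZF. In-degrees are then $k$ at $c$, $2$ at $u^{(j)}_i$ for $i<n$, and $1$ at $u^{(j)}_n$.

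\emph{Upper bound.} We start from $c$. The argument from the path theorem shows that each leg is coloured in order $u^{(j)}_1,u^{(j)}_2,\dots$. Moreover, while $u^{(j)}_i$ is white with $u^{(j)}_{i-1}$ blue (or $c$, for $i=1$) and $u^{(j)}_{i+1}$ white, it turns blue with probability exactly $1/2$ in each round. The legs never interact, since $c$ is already blue and the legs share no other vertices. Hence the finishing time of leg $j$ is
\[
X_j=1+\sum_{i=1}^{n-1}G^{(j)}_i,
\]
where the $G^{(j)}_i$ are i.i.d.\ $\Geom(1/2)$. The propagation time is $\max_j X_j$. Each $X_j$ has mean $2n-1$ and variance $2(n-1)$. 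We then bound
\[
\E\bigl[\max_j X_j\bigr]\le (2n-1)+\sum_{j=1}^k \E|X_j-(2n-1)|\le (2n-1)+k\sqrt{2(n-1)},
\]
using Cauchy--Schwarz. Since $\eptrzf(S)\le \eptrzf(S,c)$, this gives $\eptrzf(S)\le 2n(1+o(1))$ for fixed $k$.

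\emph{Lower bound.} Let $v$ be any start vertex. Choose a leg $j$ that does not contain $v$ (possible since $k>1$; if $v=c$, any leg works). Before $c$ is blue, no vertex of leg $j$ has a blue in-neighbour, so leg $j$ stays entirely white. After $c$ turns blue, leg $j$ is again coloured strictly in order from $u^{(j)}_1$ outward. Each step $i<n$ succeeds with probability exactly $1/2$ per round, because $u^{(j)}_i$ has in-neighbours $u^{(j)}_{i-1}$ (or $c$) and $u^{(j)}_{i+1}$, and only the former is blue. Therefore the time to colour $u^{(j)}_n$ is at least a sum of $n-1$ independent $\Geom(1/2)$ variables plus $1$. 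Its expectation is $2n-1$, so $\eptrzf(S,v)\ge 2n-1$ for every $v$. Minimizing over $v$ gives $\eptrzf(S)\ge 2n-1$, and together with the upper bound this yields $\eptrzf(S)=2n(1+o(1))$.

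\emph{Main obstacle.} The delicate step is controlling the expected maximum over $k$ legs in the upper bound, that is, showing it exceeds the common mean by only $o(n)$. The deviation bound $\E\max_j X_j\le \mu+\sum_j\E|X_j-\mu|$ together with the $O(n)$ variance handles this, because $k$ is fixed. The other point needing care is justifying the in-order colouring and the exact per-round probability $1/2$. This follows as in the path theorem: before $u^{(j)}_i$ turns blue, none of $u^{(j)}_i,\dots,u^{(j)}_n$ can have a blue in-neighbour except $u^{(j)}_i$ itself, through $u^{(j)}_{i-1}$.
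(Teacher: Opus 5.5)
Your proof is correct and follows essentially the same route as the paper: start from the center so that the legs are independent sequential processes with completion times $1+\sum_{i=1}^{n-1}\Geom(1/2)$, bound the expected maximum over the $k$ legs using the $O(n)$ variance, and get the lower bound $2n-1$ from a leg not containing the start vertex. The only difference is minor. You bound $\E\max_j X_j$ by $\mu+\sum_j\E|X_j-\mu|\le \mu+k\sqrt{2(n-1)}$, whereas the paper integrates a Chebyshev-plus-union-bound tail estimate; both give $2n+O(\sqrt n)$ for fixed $k$.
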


\begin{proof}
Let $c$ be the center.  For each leg $i\in\{1,\dots,k\}$, label its vertices by
$v_{i,1},v_{i,2},\dots,v_{i,n}$, where $v_{i,1}$ is adjacent to $c$ and $v_{i,n}$
is the leaf.  All edges have weight $1$, so a white vertex turns blue in a round
with probability equal to the fraction of its in-neighbors that are blue.

\medskip
\noindent\emph{Step 1 (a leg is a sequential process).}
Assume $c$ is blue.  Fix a leg $i$ and suppose $v_{i,1},\dots,v_{i,j-1}$ are blue
and $v_{i,j},\dots,v_{i,n}$ are white at the start of some round (where $j\ge 1$).
Then $v_{i,j}$ has a blue in-neighbor (namely $c$ if $j=1$, and $v_{i,j-1}$ if
$j\ge 2$), while every $v_{i,\ell}$ with $\ell>j$ has all in-neighbors among
$\{v_{i,j},v_{i,j+1},\dots,v_{i,n}\}$, hence has no blue in-neighbor.  Therefore,
on each leg the vertices become blue in order $v_{i,1},v_{i,2},\dots,v_{i,n}$
almost surely.  Moreover, for $1\le j\le n-1$ the vertex $v_{i,j}$ has exactly
two in-neighbors and exactly one is blue when it first becomes eligible, so it
turns blue in each subsequent round with probability $1/2$, independently across
rounds.  Finally $v_{i,n}$ has indegree $1$, so once $v_{i,n-1}$ is blue it
becomes blue in the next round with probability $1$.

Thus the time $T_i$ to complete leg $i$ from the start state $\{c\}$ satisfies
\[
T_i \;=\; \sum_{j=1}^{n-1} G_{i,j} \;+\; 1,
\]
where the $G_{i,j}$ are independent $\Geom(1/2)$ random variables (supported on
$\{1,2,\dots\}$).  In particular,
\[
\E[T_i]=2(n-1)+1=2n-1,
\qquad
\Var(T_i)=(n-1)\Var(\Geom(1/2))=2(n-1).
\]

\medskip
\noindent\emph{Step 2 (upper bound from starting at the center).}
Let $\tau$ be the time to turn all vertices blue starting from $\{c\}$.  Since
the spider is all-blue iff every leg is complete,
\[
\tau=\max_{1\le i\le k} T_i,
\qquad\text{so}\qquad
\eptrzf(S)\le \eptrzf(S,\{c\})=\E[\tau].
\]
Fix $t>0$.  By Chebyshev and a union bound,
\[
\Pr(\tau\ge (2n-1)+t)
\le
\sum_{i=1}^k \Pr(T_i-(2n-1)\ge t)
\le
\sum_{i=1}^k \frac{\Var(T_i)}{t^2}
\le
\frac{2kn}{t^2}.
\]
Using $\E[\tau]=(2n-1)+\int_0^\infty \Pr(\tau\ge (2n-1)+u)\,du$, we obtain
\[
\E[\tau]
\le
(2n-1) + \int_0^\infty \min\!\left\{1,\frac{2kn}{u^2}\right\}\,du
=
(2n-1) + O(\sqrt{n}),
\]
where the implicit constant depends only on $k$.

\medskip
\noindent\emph{Step 3 (lower bound).}
Let $s$ be any starting vertex.  There exists a leaf $\ell$ at (undirected) graph
distance at least $n$ from $s$ (if $s=c$ then every leaf is at distance $n$, and if
$s$ lies on some leg then any leaf on a different leg is at distance $>n$).
Along the unique simple path from $s$ to $\ell$, the blue set must advance across
at least $n$ edges.  Each advance requires at least one successful round, and for
the first $n-1$ advances the next vertex has at most one blue in-neighbor among
two in-neighbors when it first becomes eligible, hence succeeds with probability
at most $1/2$ each round; the final advance (into the leaf) has success probability
$1$.  Therefore $\eptrzf(S,s)\ge 2(n-1)+1=2n-1$, and taking the minimum over $s$
gives $\eptrzf(S)\ge 2n-1$.

Combining the bounds yields $\eptrzf(S)=2n+O(\sqrt{n})$, hence
$\eptrzf(S)=2n(1+o(1))$ as $n\to\infty$ for fixed $k$.
\end{proof}

Indeed, the last proof implies the following slightly stronger result. 

\begin{thm}
Fix an integer $k > 1$ and let $S$ be a bidirectional spider graph with $k$ legs each of length $n(1+o(1))$ and all weights equal. Then $\eptrzf(S) = 2n(1+o(1))$, where the $o(1)$ is with respect to $n$.
\end{thm}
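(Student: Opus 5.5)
I would rerun the three steps of the preceding proof with legs of lengths $n_1,\dots,n_k$, where $n_i = n(1+\epsilon_i(n))$ and $\max_i|\epsilon_i(n)|\to 0$. Write $n_{\max}=\max_i n_i$ and $n_{\min}=\min_i n_i$; both equal $n(1+o(1))$ because $k$ is fixed. The argument has an upper bound (start at the center) and a lower bound (any start vertex), each of order $2n(1+o(1))$.

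\emph{Upper bound.} Start from the center $c$. Step 1 of the previous proof applies verbatim to each leg, since it used only the local structure of a leg. So leg $i$ completes at time $T_i=\sum_{j=1}^{n_i-1}G_{i,j}+1$ with independent $G_{i,j}\sim\Geom(1/2)$. Hence $\E[T_i]=2n_i-1\le 2n_{\max}-1$ and $\Var(T_i)=2(n_i-1)\le 2n_{\max}$. Setting $\tau=\max_i T_i$, I would apply Chebyshev and a union bound to get
\[
\Pr\bigl(\tau\ge 2n_{\max}-1+t\bigr)\le \frac{2kn_{\max}}{t^2}.
\]
Integrating the tail as before gives $\E[\tau]\le 2n_{\max}+O(\sqrt{n_{\max}})$. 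This is $2n(1+o(1))$ for fixed $k$.

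\emph{Lower bound.} For an arbitrary start vertex $s$, I would pick a leaf $\ell$ on a leg not containing $s$ (possible since $k>1$); if $s=c$, any leaf works. The path from $s$ to $\ell$ passes through $c$ and then traverses that whole leg, so it has at least $n_{\min}$ edges. The coloring must advance along this path one vertex at a time. Each intermediate vertex has in-degree $2$ with at most one blue in-neighbor when it first becomes eligible: its other in-neighbor lies further along the path and is still white, or is on the far side if the vertex is $c$ (in which case $c$ has in-degree $k$ and success probability at most $(k-1)/k$).

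The only delicate point is handling $c$ cleanly. To keep the bound simple, I would ignore $c$ and the vertices between $s$ and $c$, and use only the segment along the target leg. Before a vertex on that leg turns blue, its outer neighbor is white, so it succeeds with probability at most $1/2$ per round. This gives $\eptrzf(S,s)\ge 2(n_{\min}-1)+1$. I would justify the geometric domination by the same conditional argument as before, and could alternatively invoke the monotonicity corollary to treat the leg's inner endpoint as blue from time $0$.

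Combining the two bounds and minimizing over $s$ yields
\[
2n_{\min}-1\le\eptrzf(S)\le 2n_{\max}+O(\sqrt{n}),
\]
and both sides are $2n(1+o(1))$. I do not expect a real obstacle. The one point needing care is that the $o(1)$ perturbations are uniform over the $k$ legs, which holds because $k$ is fixed.
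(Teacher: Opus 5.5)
Your proposal is correct and is essentially the paper's argument: the paper only remarks that the equal-legs proof carries over, and you rerun it with $n_{\max}$ in the Chebyshev/union-bound estimate from the center and $n_{\min}$ in the lower bound, which is exactly what is needed. Restricting the lower bound to the target leg is a slightly cleaner way to handle the center than the paper's wording, since there the blue set is always an initial segment and each non-leaf vertex succeeds with probability exactly $1/2$ once eligible. Your parenthetical bound $(k-1)/k$ for $c$ is unused and loose: before $c$ turns blue, at most one of its $k$ in-neighbors is blue, so the true bound is $1/k$.
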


In the case that $k = 2$, we obtain the following corollary for $P_n$. Note that $P_n$ can be considered a spider with two legs each of length approximately $n/2$.

\begin{corollary}
For bidirectional paths of order $n$ with all weights equal, we have $\eptrzf(\overleftrightarrow{P_n}) = n(1+o(1))$.
\end{corollary}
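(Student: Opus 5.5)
The plan is to realize $\overleftrightarrow{P_n}$ as a bidirectional spider with $k=2$ legs and then invoke the strengthened spider theorem stated just above. The legs should have length $n(1+o(1))$ measured in the right parameter, namely with respect to $m=n/2$. Label the path $v_1,\dots,v_n$ and take as center $c=v_{\lceil n/2\rceil}$. Then $\overleftrightarrow{P_n}$ is the bidirectional spider with two legs, of lengths $\lceil n/2\rceil-1$ and $n-\lceil n/2\rceil$. Both lengths are $\tfrac n2+O(1)=\tfrac n2(1+o(1))$. All weights are equal, so the hypotheses of that theorem hold with $m=n/2$ in place of $n$. It gives $\eptrzf(\overleftrightarrow{P_n})=2\cdot\tfrac n2(1+o(1))=n(1+o(1))$.

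Since the spider theorem is phrased with an integer parameter, I would not lean on it as a black box. Instead I would rerun the two halves of its proof directly on the path, which is short.

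\emph{Upper bound.} Start from $c$. Each half of the path is colored sequentially. For a leg of length $L$, its completion time is a sum of $L-1$ independent $\Geom(1/2)$ variables plus $1$ (the endpoint has in-degree $1$). So the two leg times $T_1,T_2$ have means $2L_i-1\le n+O(1)$ and variances $O(n)$. The propagation time is $\tau=\max(T_1,T_2)$. By Chebyshev, a union bound, and integrating the tail exactly as in Step 2 of the spider proof, $\E[\tau]\le n+O(\sqrt n)$. Alternatively, one can use $\E\max(T_1,T_2)\le \max_i\E T_i+\sqrt{\Var T_1+\Var T_2}$.

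\emph{Lower bound.} From any start vertex $s$, some endpoint lies at distance $d\ge \lfloor n/2\rfloor$ from $s$. The blue set moves toward that endpoint one vertex at a time. Each interior vertex along the way has two in-neighbors and at most one of them is blue when it first becomes eligible, so its per-round success probability is at most $1/2$. The same argument as Step 3 of the spider proof therefore gives $\eptrzf(\overleftrightarrow{P_n},s)\ge 2(d-1)+1\ge n-O(1)$. As a sanity check, the endpoint value $2n-3$ from Corollary~\ref{thm:bi_path} is consistent with this.

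Combining the two bounds gives $\eptrzf(\overleftrightarrow{P_n})=n+O(\sqrt n)=n(1+o(1))$. The only step needing care is the upper bound's control of the expected maximum of two sums of geometrics. This is routine via the variance bound, so I expect no real obstacle beyond bookkeeping the parity of $n$.
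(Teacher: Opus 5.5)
Your proposal is correct and follows the paper's route: the paper proves this corollary by viewing $\overleftrightarrow{P_n}$ as a two-legged spider with legs of length about $n/2$ and applying the strengthened spider theorem with $k=2$. Your direct rerun on the path is that same spider argument made explicit: the upper bound $n+O(\sqrt{n})$ comes from the middle vertex via Chebyshev, and the lower bound $2\lfloor n/2\rfloor-1$ comes from the farther endpoint. Doing it explicitly has the benefit of handling the unequal leg lengths and the parity of $n$, which the paper passes over.
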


\begin{table}[h]
\centering
\begin{tabular}{|c|c||c|c||c|c||c|c|}
\hline
$n$ & $\eptrzf(P_n)$ & $n$ & $\eptrzf(P_n)$ & $n$ & $\eptrzf(P_n)$ & $n$ & $\eptrzf(P_n)$ \\
\hline
4  & 3.0000   & 20 & 20.4216  & 36 & 37.3090  & 52 & 54.0050 \\
5  & 3.6667   & 21 & 21.3519  & 37 & 38.2595  & 53 & 54.9645 \\
6  & 5.2222   & 22 & 22.5494  & 38 & 39.4038  & 54 & 56.0837 \\
7  & 6.0370   & 23 & 23.4836  & 39 & 40.3557  & 55 & 57.0439 \\
8  & 7.4444   & 24 & 24.6711  & 40 & 41.4960  & 56 & 58.1609 \\
9  & 8.3086   & 25 & 25.6086  & 41 & 42.4492  & 57 & 59.1219 \\
10 & 9.6447   & 26 & 26.7875  & 42 & 43.5859  & 58 & 60.2367 \\
11 & 10.5327  & 27 & 27.7279  & 43 & 44.5403  & 59 & 61.1984 \\
12 & 11.8255  & 28 & 28.8993  & 44 & 45.6735  & 60 & 62.3111 \\
13 & 12.7279  & 29 & 29.8421  & 45 & 46.6291  & 61 & 63.2736 \\
14 & 13.9908  & 30 & 31.0069  & 46 & 47.7591  & 62 & 64.3843 \\
15 & 14.9032  & 31 & 31.9520  & 47 & 48.7158  & 63 & 65.3474 \\
16 & 16.1438  & 32 & 33.1109  & 48 & 49.8428  & 64 & 66.4563 \\
17 & 17.0636  & 33 & 34.0579  & 49 & 50.8005  & 65 & 67.4200 \\
18 & 18.2869  & 34 & 35.2115  & 50 & 51.9248  & 66 & 68.5272 \\
19 & 19.2125  & 35 & 36.1603  & 51 & 52.8833  & 67 & 69.4915 \\
\hline
\end{tabular}
\caption{$\eptrzf(P_n)$ for $n = 4$ to $n = 67$, obtained through computation}
\end{table}

\subsection{Other families}

We close this section by examining a collection of additional graph families-complete graphs, stars, Sun graphs, complete bipartite graphs, and $k$-ary trees-that exhibit a range of asymptotic behaviors under the RZF process.

\begin{thm}
    Let $K_{1,n-1}$ denote the undirected unweighted star graph of order $n$, and let $\overleftrightarrow{K_n}$ denote a bidirectional complete graph of order $n$ with all edges of equal weight. If $c$ denotes the center vertex of $K_{1, n-1}$, then for any vertex $v \in \overleftrightarrow{K_n}$ we have $\eptrzf(\overleftrightarrow{K_n},v) = \eptpzf(K_{1,n-1},c) = \Theta(\log{n})$.
\end{thm}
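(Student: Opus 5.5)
The plan is to show that the two processes induce the same Markov chain on the number of blue vertices. Then the two expected propagation times coincide, and it remains to estimate that common value.

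\emph{Step 1 (RZF on $\overleftrightarrow{K_n}$).} Every vertex of $\overleftrightarrow{K_n}$ has in-degree $n-1$, and all weights are equal. By Lemma~\ref{lem:constant_incoming} we may work with unweighted RZF. If $k$ vertices are blue, then every white vertex has exactly $k$ blue in-neighbors. So it turns blue with probability $k/(n-1)$, independently of the other white vertices. By symmetry, the number $X_t$ of blue vertices is a Markov chain with $X_0=1$ and transition
\[
X_{t+1}-X_t \sim \mathrm{Bin}\!\left(n-X_t,\ \tfrac{X_t}{n-1}\right).
\]
This chain has no dependence on $v$, so $\eptrzf(\overleftrightarrow{K_n},v)$ is the same for every $v$.

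\emph{Step 2 (PZF on the star from $c$).} Starting from $\{c\}$, the center is blue forever and is the only vertex that can force a white leaf. Leaves have degree $1$, and their unique neighbor $c$ is already blue, so they never force anything. Suppose $k$ vertices are blue, namely $c$ and $k-1$ leaves. Under the Kang--Yi/Geneson--Hogben convention, the forcing probability counts blue vertices in the closed neighborhood $N[c]$. With that convention, $c$ forces each white leaf independently with probability $k/\deg(c)=k/(n-1)$. The number of blue vertices therefore follows exactly the chain of Step 1. I will need to state explicitly that \eqref{def:pzf} is read with the closed-neighborhood count. With an open-neighborhood count the first step from $\{c\}$ would have probability $0$ and the claim would fail, so this convention is the real content of the identification. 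Equality of the laws of the two absorption times gives $\eptrzf(\overleftrightarrow{K_n},v)=\eptpzf(K_{1,n-1},c)$.

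\emph{Step 3 ($\Theta(\log n)$).} For the order of growth I would cite the known value $\eptpzf(K_{1,n-1},c)=\Theta(\log n)$ from \cite{ghpzf}, and sketch a self-contained argument as backup.

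For the lower bound, note that $\E[X_{t+1}\mid X_t]\le X_t+X_t=2X_t$, so $\E[X_t]\le 2^t$. By Markov's inequality, $\Pr(X_t=n)\le 2^t/n$. Hence the process is unfinished with probability at least $1/2$ for $t<\log_2 n-1$, which gives an $\Omega(\log n)$ lower bound.

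For the upper bound, split the process into two phases.
\begin{itemize}
\item While $X_t\le n/2$, the expected gain is at least $X_t/2$. A Chernoff bound (or a simple supermartingale argument on $\log X_t$) should show that $X_t$ grows by a constant factor every $O(1)$ rounds in expectation, so this phase takes $O(\log n)$ rounds.
\item Once $X_t>n/2$, each white vertex turns blue with probability at least $1/2$ in every round. The finishing time is then at most the maximum of fewer than $n$ independent $\Geom(1/2)$ variables, whose expectation is $O(\log n)$.
\end{itemize}

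The main obstacle is making the first phase rigorous for small $X_t$, where the binomial increments fluctuate relatively a lot; I would handle it by showing $X_t$ at least multiplies by $5/4$ with probability bounded below each round and using a geometric-trials count. The cleanest route overall is to rely on the published $\Theta(\log n)$ result after the coupling.
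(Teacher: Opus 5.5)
Your proposal is correct and follows the same route as the paper: both proofs observe that, with $k$ blue vertices, every white vertex of $\overleftrightarrow{K_n}$ under RZF and every white leaf of $K_{1,n-1}$ under PZF from $c$ turns blue independently with probability $k/(n-1)$, so the two blue-count chains coincide. Your point that the PZF rule must count blue vertices in the closed neighborhood $N[c]$ is exactly what the paper's probability $(b+1)/(n-1)$ silently assumes, despite the literal wording of \eqref{def:pzf}; your Step 3 also gives a self-contained proof of the $\Theta(\log n)$ growth, which the paper only asserts.
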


\begin{proof}
Suppose that $v$ in $K_n$ is colored blue and $b$ other vertices are colored blue, so there are $n-b-1$ white vertices. If $w$ is a white vertex, then the probability that $w$ gets colored in the next round of randomized zero forcing is $\frac{b+1}{n-1}$.

Now suppose that the center vertex $c$ in $K_{1,n-1}$ is colored blue and $b$ leaves are also colored blue, so there are $n-b-1$ white leaves. If $w$ is a white leaf, then only $c$ can color $w$ in probabilistic zero forcing, so the probability that $w$ gets colored in the next round is $\frac{b+1}{n-1}$.

Therefore any finite sequence $1=a_1 \le a_2 \le \dots \le a_k = n$ of numbers of blue vertices by round in $K_n$ under randomized zero forcing has the same probability of occurrence as $a_1,\dots,a_k$ has for the number of blue vertices by round in $K_{1,n-1}$ under probabilistic zero forcing. Thus the expected propagation times are the same.  
\end{proof}

\begin{thm}
    For the bidirectional Sun graph $\overleftrightarrow{S_n}$ with all edges of equal weight, we have that
    $$\eptrzf(\overleftrightarrow{S_n}) = 1 + \frac{3}{2}\left(n-1\right).$$
\end{thm}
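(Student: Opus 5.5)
I would read $\overleftrightarrow{S_n}$ as the bidirected cycle $\overleftrightarrow{C_n}$ on core vertices $c_0,\dots,c_{n-1}$, with a pendant leaf $\ell_i$ attached to each $c_i$ by a bidirected edge (I will check this against the paper's definition). Then each core vertex has in-degree $3$: two cycle neighbours and its leaf. Each leaf has in-degree $1$, with its core vertex as its only in-neighbour.

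The plan follows the proof of Theorem~\ref{thm:weighted_cycle_rec}, using a reduction to a Markov chain on the number of blue core vertices. First, by symmetry every core vertex gives the same value. A start at a leaf $\ell_i$ is worse: nothing but $c_i$ can turn blue until $c_i$ does, which takes $\Geom(1/3)$ rounds. After that the blue set contains $\{c_i,\ell_i\}$, so by the monotonicity corollary (enlarging the initial blue set) the remaining time is at least as large as the time from $\{c_i\}$ alone. Hence the leaf start costs at least $3$ more than the core start, and $\eptrzf(\overleftrightarrow{S_n})=\eptrzf(\overleftrightarrow{S_n},c_0)$.

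Starting from $c_0$, the blue core vertices always form a contiguous arc, since a white core vertex needs a blue in-neighbour and leaves can only be blue after their core vertex is. Moreover, a leaf turns blue deterministically in the round after its core vertex does, because its in-degree is $1$. The white leaves never contribute blue in-weight to anything, so the core dynamics are unaffected by the leaves. Let $X_t$ be the number of blue core vertices.
\begin{itemize}
\item When $1\le m\le n-2$, there are two distinct white boundary core vertices. Each has exactly one blue in-neighbour out of three, so each turns blue independently with probability $1/3$. The chain moves to $m$, $m+1$, $m+2$ with probabilities $4/9$, $4/9$, $1/9$.
\item When $m=n-1$, the last white core vertex has two blue in-neighbours out of three, so it turns blue with probability $2/3$.
\end{itemize}
In every state the expected increment is $2/3$. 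I would therefore guess $T_m=\tfrac32(n-m)$ for the expected time for the core to become fully blue, and verify it as in \eqref{eq:Tm-recur}:
\[
1+\tfrac49 T_m+\tfrac49 T_{m+1}+\tfrac19 T_{m+2}=T_m \quad\text{for } m\le n-2, \qquad 1+\tfrac13 T_{n-1}=T_{n-1}.
\]
Uniqueness of the solution holds because this is an absorbing chain with finite hitting times, by Theorem~\ref{thm:finite_ept_characterization}.

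Finally, the full graph is blue exactly one round after the last core vertex becomes blue. Every earlier core vertex's leaf is already blue by then, and the last leaf needs exactly one more round. This gives $\eptrzf(\overleftrightarrow{S_n},c_0)=T_1+1=1+\tfrac32(n-1)$.

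The main obstacle is getting the state space exactly right near the end. One must confirm that at $m=n-2$ the two white core vertices are distinct and each has exactly one blue in-neighbour, and that a double success jumps straight to $n$. One must also handle the ``$+1$'' for the leaves carefully when two core vertices finish simultaneously. Small $n$, in particular $n\le 2$, may need a separate check.
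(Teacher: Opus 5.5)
\textbf{Overall.} Your core computation is essentially the paper's argument. The paper also tracks the blue centers and uses the transition probabilities $4/9,4/9,1/9$. Its $S^*_{n-k}$ is your state $m=k+1$ after contracting the blue arc to a single vertex. It then checks that $\frac32(\cdot)$ solves the recurrence and adds one round for the leaves. Your fixed-$n$ formulation, with boundary values $T_n=0$ and $T_{n-1}=3/2$ (the last center has two of its three in-neighbours blue), handles the endgame more cleanly than the paper's direct computation of $S_3^*$ and $S_4^*$ as base cases.

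\textbf{The error: the reduction to a center start.} The monotonicity corollary says that $S_1\subseteq S_2$ implies $\eptrzf(G,S_1)\ge\eptrzf(G,S_2)$. Taking $S_1=\{c_i\}$ and $S_2=\{c_i,\ell_i\}$ gives $\eptrzf(G,\{c_i,\ell_i\})\le\eptrzf(G,c_i)$. That is the opposite of the inequality you need. So monotonicity cannot show that, after the $\Geom(1/3)$ wait, the leaf start is no faster than the center start.

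\textbf{The fix.} The correct justification is structural.
\begin{itemize}
\item The only out-neighbour of $\ell_i$ is $c_i$. Once $c_i$ is blue, the colour of $\ell_i$ never enters any other vertex's update probability.
\item Hence the core evolves identically in law from $\{c_i,\ell_i\}$ and from $\{c_i\}$.
\item In both cases the total time is exactly the core completion time plus one. Starting from $\{c_i\}$, the leaf $\ell_i$ is already blue after round $1$, which is no later than that.
\item Apply the strong Markov property at the round when $c_i$ turns blue. At that moment the blue set is exactly $\{c_i,\ell_i\}$.
\end{itemize}
This gives $\eptrzf(\overleftrightarrow{S_n},\ell_i)=3+\eptrzf(\overleftrightarrow{S_n},c_i)$, so the minimum is attained at a center.

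The paper handles this step only informally ("starting at a leaf delays the first possible forcing\dots"). With this replacement your argument is complete for $n\ge 3$, and it is more rigorous than the paper's on this point.
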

\begin{proof}
We proceed by induction on $n$.
For the purpose of minimizing expected propagation time, it is optimal to begin at a center vertex, as starting at a leaf delays the first possible forcing of any other vertex by at least one round, whereas starting at the adjacent center forces that leaf after the first iteration.

Let $\eptrzf(S_n^*)$ denote the expected number of rounds required to force all center vertices of $\overleftrightarrow{S_n}$, starting from a single blue center vertex. Note that once all center vertices are blue, exactly one additional iteration suffices to force any remaining leaf vertices.

From a blue center vertex, its attached leaf is forced deterministically in the next round, while each adjacent center vertex is forced independently with probability $\frac{1}{3}$. Thus there is a $\frac{4}{9}$ probability that neither adjacent center vertex is forced, a $\frac{4}{9}$ probability that exactly one is forced, and a $\frac{1}{9}$ probability that both are forced. If exactly $k\in\{0,1,2\}$ adjacent center vertices are forced, the remaining unforced centers form $S_{n-k}^*$. This yields the recurrence
\[
\eptrzf(S_n^*)
= \tfrac{4}{9}\bigl(\eptrzf(S_n^*)+1\bigr)
+ \tfrac{4}{9}\bigl(\eptrzf(S_{n-1}^*)+1\bigr)
+ \tfrac{1}{9}\bigl(\eptrzf(S_{n-2}^*)+1\bigr).
\]

A direct computation gives $\eptrzf(S_3^*)=3$ and $\eptrzf(S_4^*)=\tfrac{9}{2}$. Using the recurrence, one finds $\eptrzf(S_5^*)=6$, and the inductive hypothesis $\eptrzf(S_n^*)=\tfrac{3}{2}(n-1)$ satisfies the recurrence for all $n\ge 5$. Since one additional round suffices to force all leaf vertices,
\[
\eptrzf(\overleftrightarrow{S_n})=\eptrzf(S_n^*)+1
=1+\tfrac{3}{2}(n-1),
\]
as claimed.
\end{proof}

\begin{lemma}\cite{eisenberg} If $X$ is the maximum of $n$ independent copies of $\Geom(p)$, then \[\frac{1}{\ln(1/(1-p))} \sum_{j = 1}^n \frac{1}{j} \le \E(X) \le 1+\frac{1}{\ln(1/(1-p))} \sum_{j = 1}^n \frac{1}{j}.\] 
\end{lemma}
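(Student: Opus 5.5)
We prove the two bounds on $\E(X)$ by the tail-sum formula. Let $X=\max(G_1,\dots,G_n)$ with the $G_i$ i.i.d.\ $\Geom(p)$ on $\{1,2,\dots\}$, and set $q:=1-p$. Then $\Pr(G_i\le k)=1-q^k$, so by independence $\Pr(X\le k)=(1-q^k)^n$. Hence
\[
\E(X)=\sum_{k=0}^{\infty}\Pr(X>k)=\sum_{k=0}^{\infty}\bigl(1-(1-q^k)^n\bigr).
\]
The summand $f(x):=1-(1-q^x)^n$ is nonincreasing in $x\ge 0$, with $f(0)=1$. The standard integral comparison therefore gives
\[
\int_0^\infty f(x)\,dx\;\le\;\sum_{k=0}^\infty f(k)\;\le\; f(0)+\int_0^\infty f(x)\,dx = 1+\int_0^\infty f(x)\,dx .
\]
So both inequalities reduce to the single identity
\[
\int_0^\infty \bigl(1-(1-q^x)^n\bigr)\,dx=\frac{1}{\ln(1/q)}\sum_{j=1}^n\frac1j .
\]

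To compute the integral, substitute $y=1-q^x$. This map is increasing from $y=0$ at $x=0$ to $y\to1$ as $x\to\infty$. Since $q^x=1-y$, we have $dy=-\ln(q)\,q^x\,dx=\ln(1/q)(1-y)\,dx$, so $dx=\dfrac{dy}{\ln(1/q)(1-y)}$. The integral becomes
\[
\frac{1}{\ln(1/q)}\int_0^1\frac{1-y^n}{1-y}\,dy=\frac{1}{\ln(1/q)}\int_0^1\sum_{m=0}^{n-1}y^m\,dy=\frac{1}{\ln(1/q)}\sum_{j=1}^n\frac1j,
\]
which is the identity we need. Substituting it into the sandwich above gives both stated inequalities.

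\textbf{Edge case.} The case $p=1$ is degenerate: $\ln(1/(1-p))$ is then infinite, and $X\equiv1$. We interpret the bounds with $1/\infty=0$, giving $0\le 1\le 1$, which holds.

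\textbf{Care points.} The only steps needing attention are the monotonicity of $f$, which holds because $q^x$ is decreasing, and using the indexing of $\Geom(p)$ on $\{1,2,\dots\}$ consistently with the rest of the paper. If $\Geom$ were instead supported on $\{0,1,\dots\}$, the same argument would go through with $\Pr(X>k)=1-(1-q^{k+1})^n$, and the bounds would still hold after a shift. No result from earlier in the paper is needed.
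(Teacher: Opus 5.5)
Your proof is correct and complete. The paper gives no proof of this lemma; it simply imports the bound from \cite{eisenberg}, so your argument supplies a self-contained derivation the paper omits. The three steps all check out: the tail-sum formula $\E(X)=\sum_{k\ge 0}\bigl(1-(1-q^k)^n\bigr)$, the sum--integral comparison for the nonincreasing summand (which loses at most the $k=0$ term $f(0)=1$), and the substitution $y=1-q^x$ giving $\frac{1}{\ln(1/q)}\sum_{j=1}^n\frac1j$.

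Your integral also has a probabilistic meaning, and this is the exponential-variable viewpoint the cited source is built on. Let $Y_1,\dots,Y_n$ be i.i.d.\ exponential with rate $\lambda=\ln(1/q)$. Then $\Pr(Y_i>x)=q^x$, so $\lceil Y_i\rceil$ is $\Geom(p)$ on $\{1,2,\dots\}$. Since the ceiling is monotone, $X$ has the law of $\lceil M\rceil$ with $M=\max_i Y_i$. Moreover $\E[M]=\int_0^\infty\bigl(1-(1-q^x)^n\bigr)\,dx=\frac{1}{\lambda}\sum_{j=1}^n\frac1j$, which is exactly your integral. The sandwich then follows pointwise from $y\le\lceil y\rceil<y+1$, with no sum--integral comparison needed. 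This coupling also identifies the gap exactly as $\E\bigl[\lceil M\rceil-M\bigr]$, the quantity one analyzes to get sharper asymptotics. Your route is more elementary (pure calculus, no coupling), while the coupling route explains why the error term lies in $[0,1)$.

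One small caution about your closing remark. Under the $\{0,1,\dots\}$ convention the inequality as stated is actually false: for $n=1$ and $p=1/2$ one gets $\E(X)=1<1/\ln 2$. So the $\{1,2,\dots\}$ convention you use, which matches the paper's usage in the spider proof, is genuinely required rather than cosmetic.
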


\begin{corollary}\label{maxgeom}
    Fix $p < 1$ and suppose that $X$ is the maximum of $n$ independent copies of $\Geom(p)$. Then, $\E(X) = \Theta(\frac{1-p}{p}\log{n})$. 
\end{corollary}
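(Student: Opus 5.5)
The plan is to sandwich $\E(X)$ between constant multiples of $\frac{1-p}{p}\log n$ by feeding elementary estimates on $\ln(1/(1-p))$ and on the harmonic number $H_n=\sum_{j=1}^n \frac1j$ into the preceding lemma of \cite{eisenberg}. Since $p$ is fixed, the implied constants may depend on $p$. I will still track the $p$-dependence, so that the factor $\frac{1-p}{p}$ appears naturally and the constants stay uniform for $p$ in any range bounded away from $1$. Throughout I assume $0<p<1$ and $n\ge 2$, so that $\log n>0$ and the $\Theta$ statement is meaningful.

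\textbf{Step 1: compare $\ln(1/(1-p))$ with $p$.} I will use the standard inequalities
\[
p \;\le\; -\ln(1-p) \;\le\; \frac{p}{1-p}, \qquad 0<p<1 .
\]
The left one follows from $\ln(1-p)\le -p$. The right one is the same inequality $\ln y\le y-1$ applied at $y=1/(1-p)$. Inverting gives
\[
\frac{1-p}{p} \;\le\; \frac{1}{\ln(1/(1-p))} \;\le\; \frac{1}{p} \;=\; \frac{1}{1-p}\cdot\frac{1-p}{p}.
\]

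\textbf{Step 2: compare $H_n$ with $\log n$.} I will use $\ln n \le H_n \le 1+\ln n \le (1+1/\ln 2)\ln n$ for $n\ge 2$.

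\textbf{Step 3: combine.} For the lower bound, the lemma together with Steps 1 and 2 gives $\E(X)\ge \frac{1-p}{p}\ln n$. For the upper bound, the lemma gives
\[
\E(X)\le 1+\frac{1}{1-p}\cdot\frac{1-p}{p}\,(1+1/\ln 2)\ln n .
\]
The additive $1$ is the only term not already of the desired shape. To absorb it, note that $\frac{1-p}{p}\ln n\ge \frac{1-p}{p}\ln 2$, which is a positive constant for fixed $p<1$. Hence $1\le \frac{p}{(1-p)\ln 2}\cdot\frac{1-p}{p}\ln n$. This yields
\[
\E(X)\le C_p\,\frac{1-p}{p}\log n, \qquad C_p=\frac{1}{1-p}\Bigl(1+\frac{1}{\ln 2}\Bigr)+\frac{p}{(1-p)\ln 2}.
\]

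The main obstacle is exactly this absorption of the $+1$ and of the $1/(1-p)$ gap in Step 1. Both are harmless precisely because $p<1$ is fixed, and this is where the hypothesis is used. As $p\to 1$ the quantity $\frac{1-p}{p}\log n$ tends to $0$ while $\E(X)\ge 1$, so the statement cannot hold uniformly there. I would therefore state explicitly that the constants depend on $p$, and are uniform only on any range $p\le p_0<1$.
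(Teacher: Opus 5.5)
Your proposal is correct and is essentially the argument the paper intends: the paper states the corollary as an immediate consequence of Eisenberg's bounds without writing out a proof, and you supply the omitted details, namely $p\le \ln(1/(1-p))\le p/(1-p)$, $H_n=\Theta(\log n)$, and absorbing the additive $1$ using $n\ge 2$. Your explicit tracking of the constants, which are uniform for $p\le p_0<1$, is a genuine improvement over ``fix $p$'', since later uses such as the bound for $K_{m,n}$ apply the corollary with $p=1/m$, which is not fixed.
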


Define $T_{k, n}$ to be the complete $k$-ary directed tree of depth $n$. This tree has $n+1$ layers, $k^i$ vertices in the $i^{\text{th}}$ layer for each $i = 0, 1, \dots, n$, and edges in both directions between each vertex in layer $i$ and and $k$ vertices in layer $i+1$ for each $i = 0, 1, \dots, n-1$. 

\begin{thm}
If $T_{k, n}$ is a complete $k$-ary directed tree of depth $n$ with all edges of equal weight, then $\eptrzf(T_{k,n}) = O(k n^2)$.
\end{thm}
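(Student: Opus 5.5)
Start the process at the root $r$, so that $\eptrzf(T_{k,n}) \le \eptrzf(T_{k,n},r)$. Then bound the time for a single root-to-leaf path and take a maximum over the $k^n$ leaves. The in-degrees are as follows: the root has in-degree $k$; every internal non-root vertex has in-degree $k+1$ (one parent, $k$ children); every leaf has in-degree $1$.

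When the root is blue, each white child of the root has the root as a blue in-neighbor and turns blue each round with probability at least $\frac{1}{k+1}$. The same holds for every internal vertex whose parent is blue. A leaf whose parent is blue turns blue in the next round with probability $1$. By the coupling argument of the monotonicity lemma, we may ignore any extra help from blue children, which only increases probabilities. Hence, for a fixed vertex $v$ at depth $d$, the time $T(v)$ at which $v$ turns blue is stochastically dominated by a sum of $d$ independent $\Geom(\frac{1}{k+1})$ variables, as in the proof of Theorem~\ref{thm:finite_ept_characterization}. It suffices to treat the leaves, since $\tau=\max_{v \text{ leaf}} T(v)$: every vertex has a leaf descendant, and a leaf can only turn blue after its parent has.

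The main obstacle is controlling the maximum over $k^n$ leaves. A union bound with Chebyshev, as in the spider argument, fails because the number of leaves grows exponentially. I would use a Chernoff-type bound instead. For a leaf $\ell$, the event $T(\ell) > t$ implies that in $t$ independent Bernoulli$(\frac{1}{k+1})$ trials (a coupling of the successive geometric waits) fewer than $n$ successes occurred. Take $t = C(k+1)n$ with $C$ a large constant. By a Chernoff bound this probability is at most $e^{-c(k+1)n}$ for some $c>0$ that can be made large by increasing $C$. Since $e^{-c(k+1)n}k^n \le e^{-(c(k+1)-\ln k)n}$, the union bound over $k^n$ leaves gives
\[
\Pr(\tau > C(k+1)n + s) \le k^n e^{-c'((k+1)n+s)}.
\]
This comes from applying the same Chernoff estimate with $t=C(k+1)n+s$. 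The bound is summable in $s$, so
\[
\E[\tau] \le C(k+1)n + O(1) = O(kn).
\]

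This bound is stronger than the claimed $O(kn^2)$, so the statement follows. If the Chernoff constants become messy, a cruder fallback also suffices. Use Markov-type tail bounds on each geometric wait, or simply note that $\E\max$ of $k^n$ sums is at most $\E[\text{sum}]$ times a factor $O(\log(k^n)) = O(n\log k)$ via Corollary~\ref{maxgeom}-style reasoning. This still gives $O(kn\cdot n)$ up to the $\log k$ factor, which can be absorbed by a slightly more careful tail estimate.
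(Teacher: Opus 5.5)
Your plan is sound and differs from the paper's route: start at the root, dominate each root-to-leaf time by a sum of geometric waits, and union-bound over the $k^n$ leaves with an exponential tail. But the central Chernoff estimate is wrong. In $t=C(k+1)n$ trials with success probability $\frac{1}{k+1}$, the mean number of successes is only $Cn$, and the lower-tail exponent scales with this mean, not with $t$. The event of zero successes alone has probability $(1-\frac{1}{k+1})^{C(k+1)n}\ge e^{-3Cn/2}$, so no bound $e^{-c(k+1)n}$ with $c$ depending only on $C$ can hold for large $k$.

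The correct estimate is $\Pr(\mathrm{Bin}(t,\frac{1}{k+1})<n)\le e^{-(C-1-\ln C)n}$. Beating the union-bound factor $k^n$ therefore forces $C-1-\ln C>\ln k$, i.e.\ $C=\Theta(\log k)$. Likewise, the tail in $s$ decays only like $e^{-\Theta(s/k)}$, so it contributes $O(k)$, not $O(1)$. What your argument actually proves is $\eptrzf(T_{k,n},r)=O(kn\log k)$.

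The claimed $O(kn)$ with a $k$-independent constant is false. For $n=2$, starting at the root, $\tau-1$ is the maximum of $k$ independent $\Geom(\frac{1}{k+1})$ variables. By Eisenberg's lemma (stated before Corollary~\ref{maxgeom}), its mean is at least $H_k/\ln(1+\frac1k)\ge kH_k$. For the same reason your fallback idea of ``absorbing'' the $\log k$ factor cannot work. Also, Corollary~\ref{maxgeom} concerns i.i.d.\ geometrics, not maxima of overlapping path sums.

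With this repair your argument does prove the theorem in the sense the paper intends. The paper's proof explicitly lets the implied constant depend on $k$: its per-layer bound from Corollary~\ref{maxgeom} is really $1+(k+1)H_{k^{i+1}}=O(k\,i\log k)$. The $n=2$ example (where every starting vertex gives $\Omega(k\log k)$) shows the statement must be read that way.

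Compared with the paper, the difference is where the $\log$ penalty is paid. The paper waits for each entire layer to turn blue and pays a $\log(k^{i+1})$ factor per layer, which sums to $O(k\log k\cdot n^2)$. Your route pays the $\log(k^n)=n\log k$ penalty only once, inside the exponent of a path-level large-deviation bound. It therefore gives a bound linear in $n$, and linear growth is clearly necessary, since from the root a single leaf already needs $(n-1)(k+1)+1$ expected rounds. The cost is a genuine Chernoff computation, done with the correct exponent, in place of a one-line appeal to Corollary~\ref{maxgeom}.
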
 

\begin{proof}
    Suppose that we start with the depth-$0$ vertex as the initial blue vertex. If the vertices in layer $i$ are all blue, then it takes at most $O(k i)$ additional rounds for the vertices in layer $i+1$ to be colored blue by Corollary~\ref{maxgeom}, where the constant in the $O(i)$ bound depends on $k$. Thus, \[\eptrzf(T_{k,n}) \le \sum_{i = 0}^{n-1} O(k i) = O(k n^2).\]
\end{proof}

\begin{thm}
    If $K_{m,n}$ is the bidirectional complete bipartite graph with parts of size $m$ and $n$ and all edges of equal weight, then $\eptrzf(K_{m,n}) = O(\min(m \log{n}, n \log{m}))$.
\end{thm}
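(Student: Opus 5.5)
Since $\eptrzf(K_{m,n})$ is a minimum over starting vertices, it suffices to exhibit one start achieving each bound; by symmetry it is enough to show $\eptrzf(K_{m,n},v)=O(m\log n)$ for a suitable $v$. The other bound follows by swapping the roles of the parts. Let the parts be $A$ with $|A|=m$ and $B$ with $|B|=n$. Every vertex of $B$ has in-neighborhood exactly $A$, so its in-degree is $m$, and every vertex of $A$ has in-degree $n$. I will start from a single vertex $a\in A$.

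\emph{Phase 1.} While only $a$ is blue, each white vertex of $B$ has exactly one blue in-neighbor among $m$. In every round it therefore turns blue independently with probability at least $1/m$, and this stays true afterwards because blue vertices stay blue. By the coupling/monotonicity lemma, the time until all of $B$ is blue is stochastically dominated by the maximum of $n$ independent $\Geom(1/m)$ variables. Precisely, each $b\in B$ sees blue fraction at least $1/m$ from round one on, so its coloring time is dominated by its own $\Geom(1/m)$ built from the uniforms $U_t(b)$. By Corollary~\ref{maxgeom} (or directly from the Eisenberg bound, using $1/\ln(1/(1-p))\le 1/p$), the expected time is at most
\[
1+m\sum_{j=1}^n \tfrac1j = O(m\log n).
\]

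\emph{Phase 2.} Once every vertex of $B$ is blue, each remaining white vertex of $A$ has all of its in-neighbors blue, so it turns blue with probability $1$ in the next round. Hence at most one additional round is needed. Adding the two phases gives $\eptrzf(K_{m,n},a)\le O(m\log n)+1=O(m\log n)$, provided $m\log n\ge 1$. The degenerate cases ($n=1$, so $\log n=0$) need a separate line: if $n=1$, starting at the single vertex of $B$ colors all of $A$ in one round, so the bound should be read as $O(\min(\ldots)+1)$. I will note this explicitly.

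\emph{Main obstacle.} The only subtle step is justifying the domination by independent geometrics with fixed parameter $1/m$, since the true success probabilities grow over time and also depend on other vertices of $A$ turning blue. I would handle this with the same uniform-variable coupling used in the monotonicity lemmas. Vertex $b$ turns blue at the first $t$ with $U_t(b)\le p_t(b)$, and $p_t(b)\ge 1/m$ always holds, so its coloring time is at most the first $t$ with $U_t(b)\le 1/m$. These comparison times are independent across $b$ because they depend on disjoint families of uniforms. Taking the maximum over $b$ then yields the bound from Corollary~\ref{maxgeom}, and the symmetric argument starting in $B$ gives $O(n\log m)$.
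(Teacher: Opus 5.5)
Your proof is correct and follows the same route as the paper. Start in the part of size $m$, dominate the time to color the other part by the maximum of $n$ independent $\Geom(1/m)$ variables, add one deterministic round, and argue symmetrically for the other part. Your explicit uniform-variable coupling, your direct use of Eisenberg's bound (which controls the maximum uniformly in $p=1/m$, whereas Corollary~\ref{maxgeom} is stated only for fixed $p$), and your remark that the bound must be read as $O(\min(m\log n, n\log m)+1)$ when a part has size $1$ all make the paper's short sketch more careful.
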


\begin{proof}
    Suppose that the initial blue vertex is in the part of size $m$. By Corollary~\ref{maxgeom}, it takes at most $O(m \log{n})$ expected rounds to color all the vertices in the part of size $n$, and then any remaining uncolored vertices in the graph would be colored deterministically on the next turn. Thus, the expected propagation time would be $O(m \log{n})$. Similarly, if the initial blue vertex is in the part of size $n$, then the expected propagation time would be $O(n \log{m})$. Thus, $\eptrzf(K_{m,n}) = O(\min(m \log{n}, n \log{m}))$.
\end{proof}

\begin{conjecture}
Given a bidirectional complete bipartite graph $K_{a,b}$ with all edges of equal weight where $a > b$, let $A$ denote the partite set with $a$ vertices, and $B$ denote the partite set with $b$ vertices. Given vertices $u \in A$ and $v \in B$,
$$\eptrzf{(K_{a,b}, \{v\})} < \eptrzf{(K_{a,b}, \{u\})}$$
\end{conjecture}

\begin{table}[h]
    \centering
    \begin{tabular}{|cc|cc||cc|cc|}
        \toprule
        $a$ & $b$ & $\eptrzf(K_{a,b}, \{u\})$ & $\eptrzf(K_{a,b}, \{v\})$ &
        $a$ & $b$ & $\eptrzf(K_{a,b}, \{u\})$ & $\eptrzf(K_{a,b}, \{v\})$ \\
        \midrule
        1 & 1 & 1.000000 & 1.000000 & 6 & 4 & 5.671967 & 4.756544 \\
        2 & 1 & 3.000000 & 1.000000 & 6 & 5 & 5.639959 & 5.233268 \\
        2 & 2 & 3.000000 & 3.000000 & 6 & 6 & 5.633242 & 5.633242 \\
        3 & 1 & 4.000000 & 1.000000 & 7 & 1 & 8.000000 & 1.000000 \\
        3 & 2 & 3.936000 & 3.094286 & 7 & 2 & 6.662118 & 3.367973 \\
        3 & 3 & 3.997474 & 3.997474 & 7 & 3 & 6.235815 & 4.191726 \\
        4 & 1 & 5.000000 & 1.000000 & 7 & 4 & 6.076556 & 4.784222 \\
        4 & 2 & 4.712275 & 3.182594 & 7 & 5 & 6.010218 & 5.250222 \\
        4 & 3 & 4.682545 & 4.059473 & 7 & 6 & 5.982660 & 5.637761 \\
        4 & 4 & 4.700136 & 4.700136 & 7 & 7 & 5.973384 & 5.973384 \\
        5 & 1 & 6.000000 & 1.000000 & 8 & 1 & 9.000000 & 1.000000 \\
        5 & 2 & 5.405476 & 3.256923 & 8 & 2 & 7.251845 & 3.408869 \\
        5 & 3 & 5.257313 & 4.111751 & 8 & 3 & 6.676058 & 4.222912 \\
        5 & 4 & 5.221566 & 4.727711 & 8 & 4 & 6.449886 & 4.810055 \\
        5 & 5 & 5.221072 & 5.221072 & 8 & 5 & 6.346981 & 5.268740 \\
        6 & 1 & 7.000000 & 1.000000 & 8 & 6 & 6.296968 & 5.647558 \\
        6 & 2 & 6.049812 & 3.318017 & 8 & 7 & 6.272747 & 5.973450 \\
        6 & 3 & 5.767330 & 4.155198 & 8 & 8 & 6.262249         & 6.262249         \\
        \bottomrule
    \end{tabular}
    \caption{Expected propagation time of bidirectional $K_{a,b}$ with all edges of equal weight, by starting vertex}
    \label{tab:propagation_time_balanced_clean}
\end{table}

\section{Extremal results for unweighted directed graphs}\label{sec:extremal_unweighted}

In this section we study extremal behavior of the expected propagation time for randomized zero forcing on unweighted directed graphs. We obtain sharp upper and lower bounds in terms of basic graph parameters such as the number of edges, order, maximum degree, and radius, and we characterize when these bounds are attained.

\begin{thm}\label{thm:edge_max}
Let $G$ be an unweighted directed graph with $m$ directed edges, and assume every vertex has indegree at least $d\ge 0$.
Fix $v\in V(G)$ with $\eptrzf(G,v)<\infty$.
Then
\[
\eptrzf(G,v)\le m-d.
\]
\end{thm}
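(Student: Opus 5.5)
The bound will come from a drift (supermartingale) argument on a potential that counts the in-edges still pointing into white vertices. Let $B_t$ be the blue set after $t$ rounds of unweighted RZF started from $\{v\}$, and define
\[
N_t \;:=\; \sum_{u\notin B_t} \deg^-(u).
\]
Then $N_0 = m-\deg^-(v)\le m-d$, since every edge of $G$ has its head at exactly one vertex and $\deg^-(v)\ge d$. Also $N_t\ge 0$, $N_t$ is nonincreasing, and $N_t=0$ whenever $B_t=V(G)$. The plan is to show that $N_t$ drops by at least $1$ per round in expectation until propagation finishes. Optional stopping then gives $\E[\tau]\le N_0\le m-d$, where $\tau$ is the propagation time.

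\emph{Drift step.} Fix $t$ with $B_t\neq V(G)$. For each white $u$, let $b_u$ be the number of blue in-neighbours of $u$ at time $t$. In the unweighted rule $u$ turns blue with probability $b_u/\deg^-(u)$, and when it does, $N$ drops by exactly $\deg^-(u)$. By linearity of expectation (independence is not even needed),
\[
\E[N_t-N_{t+1}\mid \mathcal F_t] \;=\; \sum_{u\notin B_t}\frac{b_u}{\deg^-(u)}\,\deg^-(u) \;=\; \sum_{u\notin B_t} b_u,
\]
which counts the edges from $B_t$ to $V(G)\setminus B_t$. White vertices of indegree $0$ contribute $0$ to both sides and cause no trouble. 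Since $\eptrzf(G,v)<\infty$, Theorem~\ref{thm:finite_ept_characterization} shows every vertex is reachable from $v$. Hence whenever $B_t\ne V(G)$ there is at least one edge leaving $B_t$, and so $\sum_u b_u\ge 1$.

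\emph{Optional stopping.} The drift bound says that $M_t:=N_{t\wedge\tau}+(t\wedge\tau)$ is a supermartingale. Therefore $\E[N_{t\wedge\tau}]+\E[t\wedge\tau]\le N_0$ for every $t$. Since $N\ge 0$, this gives $\E[t\wedge\tau]\le N_0$, and monotone convergence yields
\[
\E[\tau]\le N_0=m-\deg^-(v)\le m-d.
\]
Finiteness of $\E[\tau]$, again from Theorem~\ref{thm:finite_ept_characterization}, is not even required for this truncation argument, but it confirms that $\tau<\infty$ almost surely.

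The only delicate point is the choice of potential. The naive bound from the proof of Theorem~\ref{thm:finite_ept_characterization}, which sums the expected coloring times of all vertices, loses too much. Weighting each white vertex by its indegree is what makes the expected decrease equal to the number of blue-to-white edges, which is always at least $1$. I would also double-check the accounting of $N_0$, namely that the in-edges of $v$ are excluded exactly once, so that the result is $m-\deg^-(v)$ rather than something larger.
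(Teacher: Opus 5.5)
Your proposal is correct and follows the paper's proof essentially step for step. Both use the same indegree-weighted potential on white vertices, with $N_0=m-\deg^-(v)$. Both identify the expected one-round drop with the number of blue-to-white edges, which is at least $1$ by reachability via Theorem~\ref{thm:finite_ept_characterization}, and both conclude with the same stopped supermartingale $N_{t\wedge\tau}+(t\wedge\tau)$ and monotone convergence.
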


\begin{proof}
Let $\tau$ be the (random) propagation time of RZF on $G$ started from $\{v\}$,
so that $\eptrzf(G,v)=\E[\tau]$.

Because $\eptrzf(G,v)<\infty$, Theorem~\ref{thm:finite_ept_characterization}
implies that every vertex of $G$ is reachable from $v$ by a directed path.

For each round $t\ge 0$, let $B_t$ be the (random) set of blue vertices after
$t$ rounds, and let $W_t:=V(G)\setminus B_t$ be the set of white vertices.
Define the potential
\[
\Phi_t \;:=\; \sum_{x\in W_t} \deg^-(x),
\]
the total indegree of the white vertices at time $t$.
Note that $\Phi_t$ is a nonnegative integer-valued random variable,
$\Phi_0=\sum_{x\neq v}\deg^-(x)=m-\deg^-(v)$, and $\Phi_t=0$ if and only if
$W_t=\varnothing$, i.e.\ the process has completed.

For a white vertex $x\in W_t$, let $b_t(x)$ be the number of in-neighbors of $x$
that are blue at time $t$.  Under unweighted RZF, conditional on $B_t$,
vertex $x$ turns blue in the next round with probability $b_t(x)/\deg^-(x)$,
and if it turns blue then $\Phi$ decreases by exactly $\deg^-(x)$.
Therefore, conditional on $B_t$,
\[
\E\!\big[\Phi_t-\Phi_{t+1}\,\big|\,B_t\big]
=
\sum_{x\in W_t} \deg^-(x)\cdot \frac{b_t(x)}{\deg^-(x)}
=
\sum_{x\in W_t} b_t(x).
\]
The final sum counts the number of directed edges from $B_t$ to $W_t$
(each such edge contributes $1$ to exactly one term $b_t(x)$), hence
\[
\E\!\big[\Phi_t-\Phi_{t+1}\,\big|\,B_t\big] \;=\; e(B_t,W_t),
\]
where $e(B_t,W_t)$ denotes the number of directed edges with tail in $B_t$
and head in $W_t$.

We claim that whenever $W_t\neq \varnothing$ (equivalently, $\Phi_t>0$), we have
$e(B_t,W_t)\ge 1$.  Indeed, fix any $w\in W_t$.  Since $w$ is reachable from $v$
and $v\in B_t$, there exists a directed path $v=u_0\to u_1\to\cdots\to u_k=w$.
Let $j$ be the smallest index with $u_j\in W_t$.  Then $j\ge 1$ and
$u_{j-1}\in B_t$, so the edge $u_{j-1}\to u_j$ is a directed edge from $B_t$ to
$W_t$.  Thus $e(B_t,W_t)\ge 1$.

Consequently, for every $t$ with $\Phi_t>0$,
\[
\E\!\big[\Phi_{t+1}\,\big|\,B_t\big]
\;=\;
\Phi_t - \E[\Phi_t-\Phi_{t+1}\mid B_t]
\;\le\; \Phi_t - 1.
\]

Now define the stopping time $\tau:=\min\{t:\Phi_t=0\}$ and the process
\[
M_t \;:=\; \Phi_{\min(t,\tau)} + \min(t,\tau).
\]
The inequality above implies that $(M_t)_{t\ge 0}$ is a supermartingale.
Taking expectations gives $\E[M_t]\le \E[M_0]=\Phi_0$ for all $t$, and so
\[
\E[\min(t,\tau)] \;\le\; \Phi_0.
\]
Letting $t\to\infty$ and using monotone convergence yields
\[
\E[\tau]\;\le\;\Phi_0 \;=\; m-\deg^-(v) \;\le\; m-d,
\]
as claimed.
\end{proof}

\begin{corollary}
Over all unweighted directed graphs $G$ with $m$ directed edges and over all $v \in V(G)$ with $\eptrzf(G,v) < \infty$, the maximum possible value of $\eptrzf(G,v)$ is $m$. Over all directed graphs $G$ with $m$ directed edges in which every vertex has indegree at least $1$ and over all $v \in V(G)$ with $\eptrzf(G,v) < \infty$, the maximum possible value of $\eptrzf(G,v)$ is $m-1$. 
\end{corollary}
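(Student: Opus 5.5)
Both statements split into an upper bound, which follows from Theorem~\ref{thm:edge_max}, and a matching construction.

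\emph{Upper bounds.} For the first statement, apply Theorem~\ref{thm:edge_max} with $d=0$, which gives $\eptrzf(G,v)\le m$. For the second, every vertex has indegree at least $1$, so we may take $d=1$ and get $\eptrzf(G,v)\le m-1$.

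\emph{Sharpness for the second statement.} We need, for each $m$, a graph in which every vertex has indegree at least $1$ and $\eptrzf(G,v)=m-1$. The cycle results give this directly. The directed cycle on $m$ vertices has $m$ edges and every indegree equal to $1$. By Corollary~\ref{prop:directed-cycle}, starting from one vertex the propagation time is exactly $m-1$.

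\emph{Sharpness for the first statement.} We need a graph with $m$ edges and $\eptrzf(G,v)=m$ for some $v$. The natural choice is the directed path $v=u_0\to u_1\to\cdots\to u_m$. It has $m$ edges and is an arborescence rooted at $v$, where $v$ has indegree $0$ and every other vertex has indegree $1$. By Theorem~\ref{thm:arborescences}, the propagation time is deterministic and equals $\ecc(v)=m$.

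The only real work is choosing extremal graphs that fit each regime. This matters most for the first statement: graphs where every vertex has indegree at least $1$ cannot exceed $m-1$, so attaining $m$ forces the start vertex to have indegree $0$, and the directed path does exactly that. The remaining points to check are small: the cycle case needs $m\ge 1$ (with $m=1$ a loop, or we restrict to $m\ge 2$), and the path case is immediate.
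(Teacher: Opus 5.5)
Your upper bounds (Theorem~\ref{thm:edge_max} with $d=0$ and $d=1$) and your extremal example for the first statement (the directed path $\overrightarrow{P_{m+1}}$ started at its indegree-$0$ endpoint) match the paper's argument. For the second statement you take a different and simpler route, and it is correct.

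The paper splits by parity. For $m=2k$ it uses $\overleftrightarrow{P_{k+1}}$ started at an endpoint, with value $2(k+1)-3=m-1$. For $m=2k+1$ it attaches one pendant out-edge at the far endpoint $u$ of $\overleftrightarrow{P_{k+1}}$, giving $(2k-1)+1=m-1$.

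Your directed cycle $\overrightarrow{C_m}$ works uniformly for all $m\ge 2$, and its verification is immediate. It also covers $m=1$ via a loop, which is a case where the paper's odd construction breaks, since the lone path vertex would then have indegree $0$.

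I would justify the value $m-1$ directly rather than through Corollary~\ref{prop:directed-cycle}. Every vertex has a unique in-neighbor, so the $i$-th vertex after $v$ turns blue deterministically in round $i$. The corollary is presented as a consequence of Theorem~\ref{thm:weighted_cycle_rec}, whose hypothesis $q>0$ excludes the one-way cycle.

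What the paper's examples buy in return is that their underlying undirected graphs are trees. So they show the bound $m-1$ is attained even among oriented trees. The bidirected path example is then reused in the subsequent corollary showing that $2n-3$ is the maximum over trees.
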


\begin{proof}
The upper bounds follow from Theorem~\ref{thm:edge_max}. To see that the first upper bound is sharp, note that $\eptrzf(\overrightarrow{P_{m+1}},v) = m$ when $v$ is the endpoint with indegree $0$. To see that the second upper bound is sharp, we split into two cases. When $m = 2k$ is even, we have $\eptrzf(\overleftrightarrow{P_{k+1}},v) = 2k-1 = m-1$ when $v$ is an endpoint. When $m = 2k+1$ is odd, let $G$ be the graph obtained from $\overleftrightarrow{P_{k+1}}$ with endpoints $u,v$ by adding a new vertex with a single edge from $u$. Then, we have $\eptrzf(G,v) = 2k = m-1$.
\end{proof}

Indeed, the $\overrightarrow{P_{m+1}}$ construction in the last proof implies the following stronger result, since the left endpoint is the only initial blue vertex that can color the whole path. 

\begin{corollary}
Over all unweighted directed graphs $G$ with $m$ directed edges with $\eptrzf(G) < \infty$, the maximum possible value of $\eptrzf(G)$ is $m$. 
\end{corollary}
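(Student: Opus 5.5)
The statement has two parts, an upper bound and a matching construction, and I will handle them in that order.

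For the upper bound, let $G$ be any unweighted directed graph with $m$ edges and $\eptrzf(G)<\infty$. Choose $v$ with $\eptrzf(G)=\eptrzf(G,v)<\infty$. Theorem~\ref{thm:edge_max} with $d=0$ gives
\[
\eptrzf(G)=\eptrzf(G,v)\le m-\deg^-(v)\le m.
\]
So the only real content lies in showing that $m$ is attained. This is the point where the previous corollary does not suffice: there we maximized over pairs $(G,v)$, while now we need $\min_v \eptrzf(G,v)=m$ for a single graph.

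For sharpness I take $G=\overrightarrow{P_{m+1}}$, the directed path $v_0\to v_1\to\cdots\to v_m$, which has $m$ edges. I will show that $v_0$ is the only singleton start with finite expected propagation time.

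\begin{itemize}
\item If the start is $v_j$ with $j\ge 1$, then $v_0$ is not reachable from $v_j$, since $v_0$ has indegree $0$. Theorem~\ref{thm:finite_ept_characterization} then gives $\eptrzf(G,v_j)=\infty$.
\item For the start $v_0$, the graph is an arborescence rooted at $v_0$. Theorem~\ref{thm:arborescences} gives $\eptrzf(G,v_0)=\ecc(v_0)=m$.
\end{itemize}

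Hence $\eptrzf(G)=\min_v\eptrzf(G,v)=m$, and this example attains the upper bound.

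One degenerate case needs a remark: $m=0$ with a single vertex, where the value $0=m$ is trivially consistent. I expect no real obstacle; the only care needed is to use the minimum over starting vertices correctly in both directions.
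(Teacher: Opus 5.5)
Your proof is correct and follows the paper's argument. The upper bound comes from Theorem~\ref{thm:edge_max} with $d=0$. Sharpness comes from $\overrightarrow{P_{m+1}}$, where the indegree-$0$ left endpoint is the only start that can color the whole path, and it finishes in exactly $m$ rounds; you just spell out via Theorems~\ref{thm:finite_ept_characterization} and~\ref{thm:arborescences} what the paper states in one line.
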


As a corollary of Theorem~\ref{thm:edge_max}, we obtain an upper bound on the maximum possible expected propagation time for any directed graph of order $n$. 

\begin{corollary}
For all unweighted directed graphs $G$ of order $n$ and for all $v \in V(G)$ with $\eptrzf(G,v) < \infty$, we have $\eptrzf(G,v)= O(n^2)$.
\end{corollary}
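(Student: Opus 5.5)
This follows directly from Theorem~\ref{thm:edge_max}, applied with $d=0$. The only thing we need is a bound on the number of directed edges $m$ in terms of the order $n$.

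Let $G$ be an unweighted directed graph of order $n$ with no loops and no parallel edges in the same direction. Each ordered pair $(u,w)$ with $u\neq w$ then contributes at most one directed edge, so
\[
m\le n(n-1).
\]
Now fix $v\in V(G)$ with $\eptrzf(G,v)<\infty$. Theorem~\ref{thm:edge_max} with $d=0$ is valid because every indegree is at least $0$. It gives
\[
\eptrzf(G,v)\le m\le n(n-1)=O(n^2).
\]
The finiteness hypothesis is the same one required in Theorem~\ref{thm:edge_max}, so nothing else needs to be checked. There is no real obstacle here beyond fixing the convention that directed graphs are simple. If loops were allowed, we would have $m\le n^2$ and the bound would still be $O(n^2)$.
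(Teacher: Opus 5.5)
Your proof is correct and is essentially the paper's own argument: bound the number of directed edges by $n(n-1)$ and apply Theorem~\ref{thm:edge_max}. The paper records the slightly sharper constant $n^2-n-1$, but your choice $d=0$, giving $n(n-1)$, already suffices for the $O(n^2)$ conclusion.
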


\begin{proof}
If $G$ has order $n$, then $G$ has at most $n(n-1)$ directed edges, so $\eptrzf(G,v) \le n^2-n-1$.
\end{proof}

Next, we show that the $O(n^2)$ bound is sharp up to a constant factor.

\begin{thm}\label{thm:omega_n2}
    There exists an unweighted directed graph $G$ of order $n$ such that $G$ has only one vertex $v$ with forward paths to all other vertices and $\eptrzf(G, v) = \Omega(n^2)$.
\end{thm}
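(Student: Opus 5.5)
We construct $G$ explicitly. Take a directed path $v=u_0\to u_1\to\cdots\to u_k$ with $k\approx n/2$, together with a set $C=\{c_1,\dots,c_{m}\}$ of $m\approx n/2$ additional vertices. Each $c_j$ sends an edge to every $u_i$ with $i\ge 1$. We also add a directed edge $u_k\to c_j$ for every $j$, so that every vertex is reachable from $v$.

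\textbf{Uniqueness of the source.} Only $v$ has paths to all vertices. Indeed, $v=u_0$ has indegree $0$, so no other vertex can reach it, while $v$ reaches everything along the path and then through $u_k$. By Theorem~\ref{thm:finite_ept_characterization}, $\eptrzf(G,v)<\infty$.

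\textbf{Structure of the process.} Start from $\{v\}$. Every vertex of $C$ has $u_k$ as its only in-neighbor, so no vertex of $C$ can turn blue before $u_k$ does. Until that moment, each $u_i$ with $i\ge1$ has in-neighbors $u_{i-1}$ and all of $C$, that is, indegree $m+1$. Its only possible blue in-neighbor is $u_{i-1}$. By the same induction used for paths, the vertices $u_1,\dots,u_k$ therefore turn blue in order. Each $u_i$ waits a $\Geom(1/(m+1))$ number of rounds after $u_{i-1}$ turns blue, and these waits are independent.

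\textbf{The bound.} The time to color $u_k$ is at least the sum of these $k$ geometric waits. Its expectation is $k(m+1)=\Omega(n^2)$ when $k,m\approx n/2$. Hence $\eptrzf(G,v)\ge k(m+1)=\Omega(n^2)$.

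The main point requiring care is the argument that $C$ stays entirely white until $u_k$ turns blue, so that the denominators $m+1$ never improve along the path. This holds because each $c_j$ has $u_k$ as its unique in-neighbor. The remaining steps are linearity of expectation and the order count $n=k+1+m$, with $k=\lfloor (n-1)/2\rfloor$ and $m=n-1-k$.
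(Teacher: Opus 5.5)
Your argument is correct, but your construction differs from the paper's.

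\textbf{The paper's construction.} The paper uses vertices $a_1,\dots,a_m,b_1,\dots,b_{m+1}$, with edges $b_i\to a_j$ for $i\ge j$ and edges $a_i\to b_{i+1}$. Coloring is then forced along the alternating chain $b_1,a_1,b_2,a_2,\dots$. The extra in-neighbors of $a_i$ are the later chain vertices $b_{i+1},\dots,b_{m+1}$, which are still white. So $a_i$ waits $\Geom(1/(m+2-i))$ rounds, and the total is $m+\sum_{i=1}^m(m+2-i)\sim n^2/8$.

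\textbf{How yours differs.} You use the same mechanism: a forced sequential chain in which each newly eligible vertex has exactly one blue in-neighbor and many in-neighbors that can only be reached later. The difference is that you put the blocking in-neighbors in a separate reservoir $C$ that stays white until the very end. This gives a constant success probability $1/(m+1)$ at every step, so the computation is one line. It also gives the exact value $k(m+1)+1\sim n^2/4$, twice the paper's leading constant. Finally, your construction works for every $n$, whereas the paper's directly covers only odd $n=2m+1$.

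\textbf{What the paper's version buys instead.} The paper's construction is edge-efficient. At every round there is exactly one blue-to-white edge, so its expected propagation time equals its number of edges. It therefore attains the bound of Theorem~\ref{thm:edge_max} with equality. In your graph, by contrast, the $m$ edges $u_k\to c_j$ are all used up in a single round.

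For the present statement, which only asks for $\Omega(n^2)$ growth in the order, either construction suffices.
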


\begin{proof}
Consider the directed graph $D = (V,E)$ with $V = \left\{a_1,\dots,a_m,b_1,\dots,b_{m+1}\right\}$ which has edges from $b_i$ to $a_j$ for all $i \ge j$ and from $a_i$ to $b_{i+1}$ for each $i = 1,\dots,m$. The minimum randomized zero forcing set has size $1$, and the only choice is $b_1$, or else it is impossible to color $b_1$. The order of the coloring must be $b_1,a_1,b_2,a_2,\dots$. After $b_i$ becomes blue, the probability that $a_i$ becomes blue is $\frac{1}{m+2-i}$, so the expected number of rounds until $a_i$ becomes blue after $b_i$ becomes blue is $m+2-i$. When $a_i$ becomes blue, $b_{i+1}$ deterministically turns blue in the next round. Thus the expected propagation time is $m+ \sum_{i = 1}^m (m+2-i) = \Omega(m^2)$.
\end{proof}

\begin{corollary}
    Over all unweighted directed graphs $G$ of order $n$ with finite $\eptrzf(G)$, the maximum possible value of $\eptrzf(G)$ is $\Theta(n^2)$.
\end{corollary}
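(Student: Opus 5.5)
The corollary has an upper half and a lower half, and I would prove them separately: an $O(n^2)$ upper bound on $\eptrzf(G)$ over all $G$ of order $n$ with $\eptrzf(G)<\infty$, and an $\Omega(n^2)$ lower bound from a single family of graphs.

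\textbf{Upper bound.} Let $G$ have order $n$ with $\eptrzf(G)<\infty$. By definition, $\eptrzf(G)=\min_v \eptrzf(G,v)$, and finiteness means some vertex $v$ has $\eptrzf(G,v)<\infty$. Apply Theorem~\ref{thm:edge_max} with $d=0$ to that $v$. Since $G$ has at most $n(n-1)$ directed edges, this gives
\[
\eptrzf(G)\le \eptrzf(G,v)\le n(n-1)=O(n^2).
\]
Equivalently, one can cite the preceding corollary that $\eptrzf(G,v)=O(n^2)$.

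\textbf{Lower bound.} I would use the digraph $D$ from the proof of Theorem~\ref{thm:omega_n2}, which has order $n=2m+1$.

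\begin{enumerate}
\item Only $b_1$ can be the start. Every $b_i$ with $i\ge 2$ has in-neighbor $a_{i-1}$, and every $a_j$ has in-neighbors $b_i$ for $i\ge j$. However, $b_1$ has in-degree $0$, so by Theorem~\ref{thm:finite_ept_characterization} any start other than $b_1$ gives infinite expected propagation time.
\item Hence $\eptrzf(D)=\eptrzf(D,b_1)$.
\item The proof of Theorem~\ref{thm:omega_n2} computes this value as $m+\sum_{i=1}^m (m+2-i)=\Omega(m^2)=\Omega(n^2)$.
\end{enumerate}

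\textbf{Even $n$.} To handle every $n$, I would append one extra vertex with a single edge from $b_{m+1}$. This vertex is colored deterministically one round after $b_{m+1}$, and it adds no edges into earlier vertices, so the order of coloring and the $\Omega(m^2)$ estimate are unchanged.

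\textbf{Main check.} The only step needing care is confirming that in Theorem~\ref{thm:omega_n2}, $b_1$ is the unique start with finite expected propagation time. This is exactly what makes the minimum over starting vertices, $\eptrzf(D)$, equal the single value $\eptrzf(D,b_1)$. Combining the two bounds gives $\Theta(n^2)$.
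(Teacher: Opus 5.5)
Your proposal is correct and follows the paper's route: the $O(n^2)$ upper bound comes from Theorem~\ref{thm:edge_max} via the edge count $n(n-1)$, and the $\Omega(n^2)$ lower bound comes from the construction in Theorem~\ref{thm:omega_n2}, where $b_1$ is the only viable start and so $\eptrzf(D)=\eptrzf(D,b_1)$. Your extra vertex for even $n$ fills in a detail the paper leaves implicit, and it works as you describe.
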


As another corollary of Theorem~\ref{thm:edge_max}, we determine the maximum possible value of $\eptrzf(T,v)$ over trees $T$.

\begin{corollary}
Over all unweighted directed graphs $T$ of order $n$ whose underlying undirected graph is a tree and over all $v \in V(T)$ with $\eptrzf(T,v) < \infty$, the maximum possible value of $\eptrzf(G,v)$ is $2n-3$.
\end{corollary}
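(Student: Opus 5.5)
The plan has two halves: an upper bound from Theorem~\ref{thm:edge_max} and a matching construction. For the upper bound, let $T$ be an unweighted directed graph of order $n$ whose underlying undirected graph is a tree, and fix $v$ with $\eptrzf(T,v)<\infty$. The underlying tree has $n-1$ edges, and each can be oriented in one or both directions, so $T$ has $m\le 2(n-1)$ directed edges. Theorem~\ref{thm:edge_max} with $d=0$ gives $\eptrzf(T,v)\le m-\deg^-(v)$. So the bound $2n-3$ follows at once unless $m=2n-2$ and $\deg^-(v)=0$.

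That leftover case is impossible. If $m=2n-2$, every tree edge is bidirected. Since $n\ge 2$ (for $n=1$ the value is $0$ and the statement is degenerate), $v$ has at least one neighbor, and hence $\deg^-(v)\ge 1$. Therefore $m-\deg^-(v)\le 2n-3$ in all cases. More explicitly, each tree edge incident to $v$ that is bidirected contributes to both $m$ and $\deg^-(v)$, and each missing reverse direction lowers $m$ by one. Either way $m-\deg^-(v)\le 2(n-1)-1$.

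For sharpness, I will use the bidirectional path $\overleftrightarrow{P_n}$ with all weights equal, started at an endpoint. Corollary~\ref{thm:bi_path} gives $\eptrzf(\overleftrightarrow{P_n},v)=2n-3$ exactly. Its underlying graph is a path, hence a tree, so the maximum is attained for every $n\ge 2$.

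The only delicate point is the case check in the upper bound, namely ruling out $m=2n-2$ together with $\deg^-(v)=0$. This is handled by the observation that a fully bidirected tree gives every vertex positive indegree. The statement writes $\eptrzf(G,v)$ where it means $\eptrzf(T,v)$; I will read it as the latter.
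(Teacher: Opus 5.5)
Your proof is correct and follows the paper's argument: bound the number of directed edges by $2(n-1)$, apply Theorem~\ref{thm:edge_max}, and attain $2n-3$ with $\overleftrightarrow{P_n}$ started at an endpoint. Your case check supplies the step that the paper's ``thus $\le 2(n-1)-1$'' leaves implicit: a fully bidirected tree has minimum indegree at least $1$, so $d=1$ may be used there, and otherwise $m\le 2n-3$. Note that the sharper form $m-\deg^-(v)$ you quote comes from the proof of Theorem~\ref{thm:edge_max} rather than its statement, but the $d=1$ version of the statement already suffices.
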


\begin{proof}
A directed graph $T$ of order $n$ whose underlying undirected graph is a tree has at most $2(n-1)$ directed edges. Thus $\eptrzf(T,v) \le 2(n-1)-1$ for all $v \in (T)$. To see that this upper bound is sharp, note that $\eptrzf(\overleftrightarrow{P_n},v) = 2n-3$ when $v$ is an endpoint.
\end{proof}

\begin{thm}\label{thm:max-indeg-tail}
Let $G$ be an unweighted directed graph of order $n$ with maximum indegree at most $d$.
If $\eptrzf(G)<\infty$, then
\[
\eptrzf(G)\;\le\; dn-\frac{d(d+1)}{2}.
\]
\end{thm}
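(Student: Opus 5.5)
The plan is to sharpen the potential-function argument from Theorem~\ref{thm:edge_max}. The target has a suggestive form:
\[
dn-\frac{d(d+1)}{2} \;=\; \sum_{i=1}^{n-1}\min(d,i).
\]
So I want to show that the white vertex which becomes the $i$-th newly coloured vertex costs at most $\min(d,i)$ in expectation, rather than $\deg^-(x)\le d$.

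\textbf{Step 1 (choice of start).} Fix any $v$ with $\eptrzf(G,v)<\infty$. Such a $v$ exists by Theorem~\ref{thm:finite_ept_characterization}, and every vertex is reachable from it. It suffices to prove $\eptrzf(G,v)\le\sum_{i=1}^{n-1}\min(d,i)$.

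\textbf{Step 2 (rank-capped potential).} Let $k_t=|B_t|$. I will try the potential
\[
\Psi_t \;=\; \sum_{j=k_t}^{n-1}\min(d,j),
\]
which depends only on the number of blue vertices. Then $\Psi_0$ equals the target, $\Psi_t=0$ exactly when $B_t=V(G)$, and $\Psi$ is nonincreasing. If the $k_t$-th to $(k_{t+1}-1)$-th vertices are coloured in round $t+1$, the drop is at least $\min(d,k_t)\,(k_{t+1}-k_t)$. Hence
\[
\E[\Psi_t-\Psi_{t+1}\mid B_t]\;\ge\;\min(d,k_t)\,\E[k_{t+1}-k_t\mid B_t]
\;=\;\min(d,k_t)\sum_{x\in W_t}\frac{b_t(x)}{\deg^-(x)}.
\]
If the right-hand side is always at least $1$, the same optional-stopping and supermartingale argument as in Theorem~\ref{thm:edge_max}, applied to $\Psi_{\min(t,\tau)}+\min(t,\tau)$, gives $\E[\tau]\le\Psi_0$.

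\textbf{Step 3 (the main obstacle).} The inequality $\min(d,k)\sum_{x\in W}b(x)/\deg^-(x)\ge 1$ needs care.

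When $k\ge d$ it is immediate. Reachability from $v$ gives some white $x$ with $b(x)\ge 1$, and $\deg^-(x)\le d$, so the sum is at least $1/d$.

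When $k<d$ it can fail: a single white $x$ might have $\deg^-(x)=d$ with only one blue in-neighbour. My first attempt is to replace the rank-based weights by a hybrid potential that charges each white vertex $x$ the amount
\[
\min\bigl(\deg^-(x),\,\text{(number of blue vertices when $x$ is coloured)}\bigr),
\]
that is, to combine $\Phi$ from Theorem~\ref{thm:edge_max} with $\Psi$. Concretely, I would take
\[
\Theta_t=\sum_{x\in W_t}\min\bigl(\deg^-(x),\,k_t+\mathrm{rank}_t(x)\bigr)
\]
for a suitable ordering of the white vertices, and check the drift using $e(B_t,W_t)\ge1$ as in Theorem~\ref{thm:edge_max}.

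If this fails, the fallback is a cruder but explicit argument for the first $d$ vertices. While $k_t<d$, bound the waiting time for the next new vertex by examining a vertex on a shortest path from $v$. Then use $\deg^-(x)\le n-1$ together with the observation that, among the early vertices, the in-edges coming from still-white vertices are counted in $\Phi$ only once. I expect this early-stage bookkeeping, where $k<d$, to be the real difficulty. The regime $k\ge d$ is handled cleanly by the drift bound above.
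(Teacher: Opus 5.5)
There is a genuine gap, and it is not a matter of early-stage bookkeeping: your accounting assigns the budgets in the wrong order. The potential $\Psi_t$ gives a budget of $\min(d,k)$ to the stage in which $k$ vertices are blue. That only works if the first $d-1$ newly colored vertices are cheap, and in general they are not.

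Take edges $v\to x$ and $x\to y_i$, $y_i\to x$ for $i=1,\dots,d-1$, with $d\ge 2$. Each $y_i$ is reachable only through $x$. So $x$ has exactly one blue in-neighbor (out of $d$) until it turns blue, and the very first new vertex costs $d$ rounds in expectation, against your budget of $\min(d,1)=1$. Already $\E[\Psi_1+1\mid B_0]=\Psi_0+1-\frac{1}{d}>\Psi_0$, so $\Psi_{\min(t,\tau)}+\min(t,\tau)$ is not a supermartingale. Moreover, the graphs attaining this bound (for $n\ge 2d-1$) have each of the first $d-1$ stages costing exactly $d$. So no re-estimation of the stages with $k<d$ can make the budget $\min(d,k)$ work.

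Your two alternatives do not close the gap. $\Theta_t$ is not pinned down: no ordering is specified, and since $k_t$ grows with every new blue vertex, the caps $k_t+\mathrm{rank}_t(x)$ of surviving white vertices can grow. So $\Theta$ need not even be nonincreasing, and no drift bound is given. The fallback looks for savings among the early vertices, where in the worst case there are none.

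The missing idea is that the savings occur at the \emph{end} of the process, so you should index stages by the number $r=|W_t|$ of white vertices.
If $1\le r\le d$ and $w$ is a white vertex with a blue in-neighbor (which exists by reachability), then at most $r-1$ in-neighbors of $w$ are white. Hence $b_t(w)\ge\max\{1,\deg^-(w)-r+1\}$, and therefore $b_t(w)/\deg^-(w)\ge 1/r$; for $\deg^-(w)\ge r$ this reduces to $(r-1)(\deg^-(w)-r)\ge 0$. For $r>d$ one has the trivial bound $1/d$.

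So while $r$ white vertices remain, some vertex turns blue in the next round with probability at least $1/\min(d,r)$, and that stage costs at most $\min(d,r)$ in expectation. Summing over $r=1,\dots,n-1$ gives the same total $\sum_r\min(d,r)$ as yours, with the budgets in reverse order. This is the paper's proof, written with stopping times $\sigma_r=\min\{t:|W_t|\le r\}$ and geometric domination.

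Your supermartingale framework also works after this switch. Put $\Psi_t:=\sum_{r=1}^{|W_t|}\min(d,r)$. Any round in which at least one vertex turns blue removes at least the top term $\min(d,|W_t|)$. So $\E[\Psi_t-\Psi_{t+1}\mid B_t]\ge \min(d,|W_t|)\cdot\frac{1}{\min(d,|W_t|)}=1$, and optional stopping gives $\E[\tau]\le\Psi_0$. Note that the drop must now be bounded by the probability of at least one success times the largest increment, not by the expected number of new blue vertices times the smallest increment as in your Step 2.

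(Your identity $dn-\frac{d(d+1)}{2}=\sum_{i=1}^{n-1}\min(d,i)$ needs $d\le n$, which holds if $d$ is the actual maximum indegree. For $d\ge n+1$ the stated bound can fail, e.g.\ for a single edge on two vertices with $d=3$.)
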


\begin{proof}
Fix $v\in V(G)$ with $\eptrzf(G)=\eptrzf(G,v)$ and run RZF from $\{v\}$.
Let $\tau$ be the propagation time and, for each $t\ge 0$,
let $B_t$ be the blue set after $t$ rounds and $W_t:=V(G)\setminus B_t$ the white set.

For $r\in\{1,2,\dots,n-1\}$, define $\sigma_r$ to be the first round $t$ such that
$|W_t|\le r$.  Then $\sigma_{n-1}=0$ and $\sigma_0=\tau$, and
\[
\tau=\sum_{r=1}^{n-1}(\sigma_{r-1}-\sigma_r).
\]
We bound $\E[\sigma_{r-1}-\sigma_r]$ for each $r$.

Fix $r\ge 1$ and condition on the history up to time $\sigma_r$.
If $|W_{\sigma_r}|=0$ there is nothing to prove. Otherwise, since $\eptrzf(G,v)<\infty$,
Theorem~\ref{thm:finite_ept_characterization} implies that there exists at least one
directed edge from $B_{\sigma_r}$ to $W_{\sigma_r}$, and hence there exists a white
vertex $w$ having at least one blue in-neighbor at time $\sigma_r$.

Let $b$ be the number of blue in-neighbors of $w$ at time $\sigma_r$.
Since $w$ has at least one blue in-neighbor, we have $b\ge 1$.
Also, because $|W_{\sigma_r}|\le r$, at most $r-1$ of the in-neighbors of $w$ can be white,
so $b\ge \deg^-(w)-(r-1)$.  Hence
\[
b \;\ge\; \max\{1,\deg^-(w)-(r-1)\}.
\]
  Since $\deg^-(w)\le d$, we have
\[
\Pr(w\text{ turns blue in the next round}\mid \mathcal F_{\sigma_r})
=
\frac{b}{\deg^-(w)}
\;\ge\;
\frac{\max\{1,\deg^-(w)-(r-1)\}}{\deg^-(w)}
\;\ge\;
\frac{1}{r}.
\]
Indeed, if $\deg^-(w)<r$ then $\max\{1,\deg^-(w)-(r-1)\}=1$ and $1/\deg^-(w)\ge 1/r$,
while if $\deg^-(w)\ge r$ then
$\frac{\deg^-(w)-(r-1)}{\deg^-(w)}\ge \frac{1}{r}$ because $(r-1)(\deg^-(w)-r)\ge 0$.

Moreover, as long as $|W_t|\le r$ and $W_t\neq\varnothing$, the same reasoning applies at time $t$:
by Theorem~\ref{thm:finite_ept_characterization} there exists a directed edge from $B_t$ to $W_t$, so there is a white vertex with at least one blue in-neighbor, and the conditional probability that \emph{some} white vertex turns blue in the next round is at least $1/r$.
Therefore, conditional on $\mathcal F_{\sigma_r}$, the waiting time to reduce the
number of white vertices from at most $r$ to at most $r-1$ is stochastically dominated
by a geometric random variable with success probability $1/r$. Hence
\[
\E[\sigma_{r-1}-\sigma_r]\le r.
\]
For $r\ge d+1$ we use the trivial bound
\[
\E[\sigma_{r-1}-\sigma_r]\le d,
\]
because some eligible white vertex has success probability at least $1/d$ in each round.

Combining these bounds and using linearity of expectation,
\begin{align*}
\eptrzf(G)
=\E[\tau]
&\le
\sum_{r=d+1}^{n-1} d \;+\; \sum_{r=1}^{d} r\\
&=
d(n-d-1) + \frac{d(d+1)}{2}
=
dn-\frac{d(d+1)}{2},
\end{align*}
as claimed.
\end{proof}

\begin{corollary}
Over all unweighted directed graphs $G$ of order $n$ with maximum degree $d$ and $\eptrzf(G) < \infty$, the maximum possible value of $\eptrzf(G)$ is $d n -\frac{d(d+1)}{2}$.
\end{corollary}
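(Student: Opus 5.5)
The upper bound is exactly Theorem~\ref{thm:max-indeg-tail}: a graph with maximum degree $d$ has maximum indegree at most $d$. So the only remaining task is sharpness. For each $n$ and each $d\le n-1$, I would exhibit a directed graph $G$ of order $n$ with maximum degree $d$ and
\[
\eptrzf(G)=dn-\frac{d(d+1)}{2}.
\]
The proof of Theorem~\ref{thm:max-indeg-tail} indicates what the extremal graph must do. While $r\ge d+1$ white vertices remain, exactly one white vertex may be eligible, and it must succeed with probability exactly $1/d$. In the last $d$ phases, the unique eligible white vertex must have exactly one blue in-neighbor out of $r$ in-neighbors. Moreover, the graph must be \emph{chain-like}, so that only one vertex can ever turn blue per success.

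\textbf{Construction.} Take vertices $u_0,u_1,\dots,u_{n-1}$. For each $i$, put an edge $u_{i-1}\to u_i$, together with $d-1$ ``blocking'' in-edges into $u_i$ from vertices that are still white when $u_i$ first becomes eligible, namely from the vertices $u_j$ with $j>i$. For $i\le n-1-d$, let the blockers of $u_i$ be $u_{i+1},\dots,u_{i+d-1}$. Near the end there are not enough later vertices, so $u_i$ gets as blockers all later vertices $u_{i+1},\dots,u_{n-1}$. Its indegree is then $n-i$, and its success probability is $1/(n-i)$, matching the bound $1/r$ with $r=n-i$ white vertices. Because every in-neighbor of $u_i$ other than $u_{i-1}$ has a larger index, the blue set is always an initial segment $\{u_0,\dots,u_{i-1}\}$. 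The next vertex then succeeds with probability $1/\deg^-(u_i)$ in each round. Summing the geometric expectations gives
\[
\sum_i \deg^-(u_i)=d(n-1-d)+\sum_{r=1}^{d} r ,
\]
which matches the bound after re-indexing. I would check that the count of indices with indegree $d$ versus indegree $r<d$ matches the split $r\ge d+1$ versus $r\le d$ in the proof of Theorem~\ref{thm:max-indeg-tail}; this off-by-one bookkeeping is where care is needed.

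\textbf{Main obstacles.} The first is the degree constraint. ``Maximum degree $d$'' may mean that total degree (in plus out) is at most $d$, which the construction above violates, since $u_j$ has many out-edges to earlier vertices. I would first read ``degree'' as indegree, consistently with Theorem~\ref{thm:max-indeg-tail}. If total degree is intended, the bound would have to be attained with each vertex having in-plus-out degree at most $d$, which seems impossible for large $d$. In that case I would state the corollary for maximum indegree.

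The second is the minimum over starting vertices in $\eptrzf(G)$. I need $u_0$ to be the only vertex from which every vertex is reachable, so that the minimum is forced. This holds because $u_0$ has indegree $0$: by Theorem~\ref{thm:finite_ept_characterization}, every other start gives infinite expected propagation time.
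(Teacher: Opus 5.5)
Your proposal is correct and is essentially the paper's proof. The upper bound is quoted from Theorem~\ref{thm:max-indeg-tail}, with ``degree'' read as indegree; the paper reads it the same way, since its own construction has out-degrees up to $d$. Sharpness uses the same chain as the paper: each vertex gets a forward edge from its predecessor and back-edges from its next $d-1$ successors, giving $d(n-1-d)+\frac{d(d+1)}{2}$. Your explicit requirement that $u_0$ receive no edges is actually needed. The paper's edge list, taken literally with $i=1$, gives $v_1$ in-edges from $v_2,\dots,v_d$, which contradicts its own claim that $v_1$ has indegree $0$, so your version is the correct form of the construction.
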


\begin{proof}
The upper bound $\eptrzf(G) \le d n -\frac{d(d+1)}{2}$ follows from the previous theorem. To see that this upper bound is sharp, consider the graph on $n$ vertices $v_1, v_2, \dots, v_n$ with edges from $v_i$ to $v_{i+1}$ for each $1\le i \le n-1$ and edges from $v_{i+j}$ to $v_i$ for all $j \le d-1$ and $1 \le i \le n-j$. 

The vertex $v_1$ must be the initial blue vertex, since it has indegree $0$. Moreover, the vertices must be colored in the order $v_1, v_2, \dots, v_n$. For each $i = 1, \dots, n-d-1$, it takes $d$ expected rounds to color $v_{i+1}$ after $v_i$ is colored. For each $i = n-d, \dots, n-1$, it takes $n-i$ expected rounds to color $v_{i+1}$ after $v_i$ is colored. Altogether, the expected propagation time is \[d(n-d-1)+ \sum_{i = n-d}^{n-1} (n-i) = d n -\frac{d(d+1)}{2}.\]
\end{proof}

Now, we turn to minimums, starting with all directed graphs $G$ of order $n$.

\begin{thm}
    Over all unweighted directed graphs $G$ of order $n > 1$, the minimum possible value of $\eptrzf(G)$ is $1$.
\end{thm}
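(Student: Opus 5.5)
The statement has two halves: a lower bound $\eptrzf(G)\ge 1$ for every directed graph of order $n>1$, and a graph attaining the value $1$. I will handle them separately.

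\emph{Lower bound.} Fix $v\in V(G)$ and start RZF from $\{v\}$. Since $n>1$, at time $0$ there is at least one white vertex. The propagation time $\tau$ is therefore a random variable with $\tau\ge 1$ almost surely on the event that it is finite. If the all-blue state is unreachable, then $\eptrzf(G,v)=\infty\ge 1$ by definition. In either case $\eptrzf(G,v)\ge 1$ for every $v$, and taking the minimum gives $\eptrzf(G)\ge 1$.

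\emph{Sharpness.} It suffices to exhibit, for each $n>1$, a directed graph of order $n$ and a vertex $v$ with $\eptrzf(G,v)=1$. I will take the out-star: vertex $v$ with edges $(v,u)$ for every other vertex $u$, and no further edges. This is an arborescence rooted at $v$ in which every other vertex has indegree $1$. By Theorem~\ref{thm:arborescences}, propagation from $v$ is deterministic and $\eptrzf(G,v)=\ecc(v)=1$.

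The same conclusion also follows directly: each $u\neq v$ has $v$ as its unique in-neighbor, so it turns blue with probability $1$ in round $1$. Alternatively, the earlier remark on bidirected stars started at the center also gives the value $1$.

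Combining the two parts, the minimum is exactly $1$. There is no real obstacle here; the only care needed is the convention that $\eptrzf=\infty$ when propagation is impossible, so that such vertices do not violate the lower bound.
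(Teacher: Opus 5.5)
Your proposal is correct and matches the paper's proof: the lower bound is immediate because some vertex is still white at time $0$ when $n>1$, and a star started at its center, which the paper denotes $K_{1,n-1}$, attains the value $1$. Your handling of the infinite case and your use of the arborescence theorem only add detail to the paper's two-line argument.
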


\begin{proof}
The lower bound $\eptrzf(G) \ge 1$ follows since $n > 1$. This bound can be attained by $K_{1, n-1}$, so the minimum possible value of $\eptrzf(G)$ is $1$.
\end{proof}

Next, we obtain a sharp lower bound on $\eptrzf(G)$ with respect to $\rad(G)$.

\begin{thm}
    For any unweighted directed graph $G$ and any vertex $v \in V(G)$,  $\eptrzf(G, v) \ge \rad(G)$.
\end{thm}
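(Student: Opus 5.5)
The plan is a deterministic, pathwise argument. The key fact is that in one round of RZF a vertex can turn blue only if it already has a blue in-neighbor. So the blue set can extend by at most one directed step per round.

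First, dispose of the infinite case. If $\eptrzf(G,v)=\infty$ there is nothing to prove. So assume, by Theorem~\ref{thm:finite_ept_characterization}, that every vertex is reachable from $v$ by a directed path. In particular the directed eccentricity $\ecc(v)=\max_{u}\dist(v,u)$ is finite. Here $\rad(G)$ is the minimum over vertices of this out-eccentricity (the convention natural for this process). Then $\ecc(v)\ge\rad(G)$, so it suffices to show $\eptrzf(G,v)\ge \ecc(v)$.

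Second, prove by induction on $t$ that, on every sample path, $B_t\subseteq L_t:=\{u:\dist(v,u)\le t\}$. The base case is $B_0=\{v\}=L_0$. For the inductive step, suppose $u\notin L_{t+1}$. Then every in-neighbor $x$ of $u$ satisfies $\dist(v,x)\ge t+1$, since otherwise $\dist(v,u)\le t+1$. Hence no in-neighbor of $u$ lies in $L_t\supseteq B_t$, and the RZF rule gives $u$ probability $0$ of turning blue in round $t+1$. (If $u$ has indegree $0$ the probability is also $0$ by definition.) So $B_{t+1}\subseteq L_{t+1}$ almost surely. This is the same argument as the converse half of the proof of Theorem~\ref{thm:arborescences}.

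Third, conclude. Pick $u$ with $\dist(v,u)=\ecc(v)$. Then $u\notin B_t$ for all $t<\ecc(v)$, so the propagation time $\tau$ satisfies $\tau\ge\ecc(v)$ almost surely. Taking expectations gives $\eptrzf(G,v)\ge\ecc(v)\ge\rad(G)$.

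The only real obstacle is definitional: fixing the meaning of $\rad(G)$ for directed graphs so that $\ecc(v)\ge\rad(G)$ holds. Using out-eccentricities, this is immediate from the definition of radius as a minimum. If the paper intends undirected distances, the inequality still holds, because the directed distance $\dist(v,u)$ is at least the undirected distance, so the directed eccentricity of $v$ dominates its undirected eccentricity, which in turn is at least the undirected radius.
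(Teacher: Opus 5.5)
Your proposal is correct and is essentially the paper's argument: the paper picks a vertex at forward distance at least $\rad(G)$ from $v$ and notes that it cannot turn blue before that round. Your induction showing $B_t\subseteq\{u:\dist(v,u)\le t\}$ on every sample path makes rigorous the paper's one-line claim that the blue set advances at most one directed step per round.
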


\begin{proof}Let $v$ be any initial blue vertex in $G$. There must exist some $u$ with forward-distance at least $r = \rad(G)$ from $v$. Thus $u$ at the earliest can only be colored blue in round $r$, so the expected propagation time is at least $r$. 
\end{proof}

The lower bound in the last result is sharp, as evidenced by the unidirectional directed path of order $r+1$.

\begin{thm}
Let $G$ be a directed graph and fix a vertex $v\in V(G)$.  
Let $\rad(G,v)$ denote the directed eccentricity of $v$. Consider randomized zero forcing (RZF) starting from $v$. Then $\eptrzf(G,v) = \rad(G,v)$ if and only if every vertex is reachable from $v$ and every directed cycle of $G$ contains $v$.
\end{thm}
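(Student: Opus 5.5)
The plan is to compare, vertex by vertex, the random coloring times $T(u)$ with the deterministic distances $\dist(v,u)$, and to read off both directions of the equivalence from when this comparison is tight. The basic observation is the one used in the radius lower bound above. A white vertex can only turn blue if it has a blue in-neighbor, so by induction $B_t\subseteq\{u:\dist(v,u)\le t\}$ for all $t$. Hence $T(u)\ge \dist(v,u)$ pointwise, and $\tau\ge \rad(G,v)$ pointwise, where $\tau$ is the propagation time from $v$. Consequently $\eptrzf(G,v)=\rad(G,v)$ holds if and only if $\tau=\rad(G,v)$ almost surely. That in turn amounts to saying that every vertex $u$ at maximum distance is colored exactly at time $\dist(v,u)$ with probability $1$. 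Finiteness is handled by Theorem~\ref{thm:finite_ept_characterization}: if some vertex is unreachable, then $\eptrzf(G,v)=\infty\neq\rad(G,v)$, so reachability is necessary, and I assume it from here on.

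\emph{Necessity of the cycle condition.} Suppose some directed cycle $C$ avoids $v$. Let $u$ be the vertex of $C$ closest to $v$, and let $u'$ be its predecessor on $C$. Then $u'$ is an in-neighbor of $u$ with $\dist(v,u')\ge \dist(v,u)$, so $u'$ is white at time $\dist(v,u)-1$. Therefore, in round $\dist(v,u)$, vertex $u$ turns blue with probability strictly less than $1$, and with positive probability $T(u)>\dist(v,u)$. I would then propagate this delay forward along a shortest path from $u$ to a vertex realizing the eccentricity. The point to check is that such a delay can be forced to persist, for example by conditioning on an event of positive probability in which the vertices beyond $u$ on that path also fail once. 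This would give $\Pr(\tau>\rad(G,v))>0$ and hence strict inequality.

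\emph{Sufficiency.} Here I would mimic the arborescence proof (Theorem~\ref{thm:arborescences}) and show by induction on $t$ that $B_t=L_t:=\{u:\dist(v,u)\le t\}$. The inductive step needs every in-neighbor of a vertex $w$ at distance $t+1$ to lie in $L_t$; then $w$'s blue in-weight equals its total in-weight and it turns blue with probability $1$. The tool for this is the cycle hypothesis: an in-neighbor $x$ of $w$ with $\dist(v,x)\ge \dist(v,w)$ should be ruled out by producing a cycle through $w$ and $x$ that avoids $v$.

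I expect this last step to be the main obstacle, and I am not confident it goes through as the statement is worded. A back edge $x\to w$ with $\dist(v,x)\ge\dist(v,w)$ yields a cycle only if $w$ reaches $x$. A transitive edge inside the same layer, or from a deeper layer but coming from a different branch, need not close any cycle. For example, with edges $v\to a$, $v\to b$, $a\to b$, no directed cycle exists, yet $b$ turns blue in round $1$ only with probability $1/2$. So before writing the sufficiency argument I would first test such small acyclic configurations. If they break the claim, the hypothesis has to be read as the stronger layering condition, namely that every in-edge of $w$ comes from layer $\dist(v,w)-1$. Under that reading the induction above closes immediately.
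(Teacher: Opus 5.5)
Your doubt about sufficiency is justified, and your own example settles it. With edges $v\to a$, $v\to b$, $a\to b$, every vertex is reachable from $v$, there is no directed cycle, and $\rad(G,v)=1$. Yet $b$ turns blue in round $1$ only with probability $1/2$, so $\eptrzf(G,v)=3/2$. The ($\Leftarrow$) direction is therefore false as stated, and you should present this as a counterexample rather than as a case to test. The paper's proof breaks exactly where you expected. From the walk $w_{\rad(G,v)}\to\cdots\to w_0=w$ with $w_i\notin B_{\rad(G,v)-i}$, it obtains a path of length $\rad(G,v)$ ending at $w$ whose first vertex is not $v$, and claims this contradicts the definition of $\rad(G,v)$. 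Nothing is contradicted: in your example that path is just $a\to b$.

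Your reduction to $\tau=\rad(G,v)$ almost surely is correct, but your necessity argument has a real gap. Since $\tau=\max_u T(u)$, the requirement is that \emph{every} vertex is blue by time $\rad(G,v)$, not merely that the farthest vertices are colored on time. A one-round delay at $u$ also need not reach any farthest vertex. Take $v\to a\to b\to c$ plus $v\to x$, $x\to y$, $y\to x$. The unique farthest vertex $c$ is always colored at round $3=\rad(G,v)$ and is unreachable from the cycle, yet $\eptrzf(G,v)>3$. Use the whole cycle instead. While $C$ is entirely white, each $x\in C$ has its $C$-predecessor as a white in-neighbor, so it stays white with conditional probability at least $1/\deg^-(x)$, independently of the others. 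Hence $C\cap B_{\rad(G,v)}=\emptyset$ with probability at least $\prod_{x\in C}\bigl(1/\deg^-(x)\bigr)^{\rad(G,v)}>0$. This is, in substance, the paper's ($\Rightarrow$) argument, which is sound.

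Your layering condition is sufficient but not necessary, so it does not restore an equivalence. For example, $v\to a\to b\to c$ plus $v\to x$, $v\to y$, $x\to y$ has $\eptrzf(G,v)=3=\rad(G,v)$, although $y$ has an in-neighbor in its own layer. The condition actually missing from the statement is that no directed path starting at $v$ is longer than $\rad(G,v)$:
\begin{itemize}
\item If every directed cycle contains $v$, then $G$ minus the in-edges of $v$ is acyclic.
\item The bound $T(w)\le 1+\max_{x\in N^-(w)}T(x)$ then gives $T(w)\le\ell(w)$, where $\ell(w)$ is the length of a longest directed path from $v$ to $w$.
\item Conversely, consider the event that, in each of the first $\max_w\ell(w)$ rounds, every white vertex with a white in-neighbor stays white. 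This event has positive probability, and on it $T(w)=\ell(w)$ for all $w$ at once.
\end{itemize}
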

\begin{proof}
We prove each direction separately.

\medskip
\noindent\textbf{($\Rightarrow$)}  
Suppose there exists either a vertex not reachable from $v$, or a directed cycle $C$ that does not contain $v$. In the first case, $\eptrzf(G)>\rad(G,v)$ trivially. Thus assume every vertex is reachable from $v$ and that there exists a directed cycle $C$ disjoint from $v$.

Let $B_t$ denote the blue set after $t$ rounds of RZF.
Because $C$ is disjoint from $v$ and reachable from $v$, there exists a realization of the random forcing choices (of positive probability) in which no vertex of $C$ is colored blue during the first $\rad$ rounds. Thus, the propagation time is greater than $\rad(G,v)$ on this event. Since the event has positive probability, we conclude $\eptrzf(G,v)>\rad(G,v)$.

\medskip
\noindent\textbf{($\Leftarrow$)}  
Assume now that every vertex is reachable from $v$ and that every directed cycle of $G$ contains $v$. Suppose for contradiction that there exists a realization of the RZF process and a vertex $w$ such that
$w\notin B_{\rad(G,v)}$, where $B_{\rad(G,v)}$ denotes the blue vertices of $G$ after the $\rad(G,v)$ round of RZF on this realization. Set $w_0:=w$.

Since $w_0\notin B_{\rad(G,v)}$, there exists an in-neighbor $w_1\in N^-(w_0)$ such that $w_1\notin B_{\rad(G,v)-1}$, as otherwise all in-neighbors of $w_0$ would be blue by round $\rad(G,v)-1$, forcing $w_0$ to be blue at round $\rad(G,v)$.

Iterating this argument, we construct vertices
\[
w_0,w_1,\dots,w_{\rad(G,v)}
\]
such that
\[
w_{i+1}\in N^-(w_i)
\quad\text{and}\quad
w_i\notin B_{\rad(G,v)-i}
\quad\text{for all } i=0,\dots,\rad(G,v)-1.
\]
In particular, $w_{\rad(G,v)}\notin B_0=\{v\}$, so $w_{\rad(G,v)}\neq v$.

This yields a directed walk
\[
w_{\rad(G,v)}\to w_{\rad(G,v)-1}\to\cdots\to w_0=w.
\]

If all vertices in this walk are distinct, then it contains a directed path of length $\rad(G,v)$ ending at $w$ whose initial vertex is not $v$.
Since $v$ reaches $w$ by a directed path of length at most $\rad(G,v)$,
this contradicts the definition of $\rad(G,v)$. Otherwise, some vertex repeats along the walk. Taking the first repetition yields a directed cycle disjoint from $v$, contradicting the assumption that every directed cycle contains $v$. In both cases we obtain a contradiction. Therefore every vertex must be blue by round $\rad(G,v)$, and thus $\eptrzf(G,v) = \rad(G,v)$
\end{proof}

\begin{corollary}
The only unweighted directed graphs $G$ of order $n$ with $\eptrzf(G) = 1$ are those $G$ obtained from $\overrightarrow{K_{1,n-1}}$ by adding any number of edges to the center. There are exactly $n$ such $G$, up to isomorphism.
\end{corollary}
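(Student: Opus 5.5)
The plan is to reduce to the characterization theorem just proved: $\eptrzf(G,v)=\rad(G,v)$ holds exactly when every vertex is reachable from $v$ and every directed cycle of $G$ contains $v$. Since $\eptrzf(G)=1$ means some vertex $c$ has $\eptrzf(G,c)=1$, and $\eptrzf(G,c)\ge \rad(G,c)\ge 1$ for $n>1$, we need $\rad(G,c)=1$ together with $\eptrzf(G,c)=\rad(G,c)$.

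First I show that such a $G$ has the required form. Because $\rad(G,c)=1$, every other vertex is an out-neighbor of $c$, so $G$ contains the out-star $\overrightarrow{K_{1,n-1}}$ centered at $c$. Next, there can be no edge $x\to y$ with $x,y\neq c$. If such an edge existed, then together with $c\to x$ and $c\to y$ the vertex $y$ would have in-degree at least $2$. I would argue directly rather than through cycles. At round $1$, $y$ has the blue in-neighbor $c$ and the white in-neighbor $x$, so it turns blue with probability at most $1/2<1$. Hence the propagation time exceeds $1$ with positive probability and $\eptrzf(G,c)>1$. Note that the cycle criterion alone would not exclude, for example, a transitive tournament among the leaves, so this direct observation, or equivalently the converse direction of the theorem's proof, is essential. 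Consequently every edge of $G$ either leaves $c$ or enters $c$. So $G$ is the out-star plus some set of edges $x\to c$.

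Conversely, suppose $G$ is the out-star plus any set of edges into $c$. Each leaf then has in-degree exactly $1$, with in-neighbor $c$. Starting from $c$, every leaf turns blue with probability $1$ in round $1$, so $\eptrzf(G,c)=1$. Since $\eptrzf(G)\ge 1$ for $n>1$, we get $\eptrzf(G)=1$.

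Finally, I count isomorphism classes. Such a graph is determined by the number $k\in\{0,1,\dots,n-1\}$ of leaves with an edge back to $c$, since leaves are otherwise interchangeable. Two such graphs with different $k$ are non-isomorphic. I would show this by identifying $c$ intrinsically, for instance as the unique vertex of out-degree $n-1$ when $n\ge 3$, and then reading off $k$ as its in-degree. This gives exactly $n$ classes. The main obstacle is the small case $n=2$, where $k=0$ gives a single directed edge and $k=1$ gives the bidirected edge. These two graphs are distinct, so the count $n=2$ still holds, but I would check it separately.
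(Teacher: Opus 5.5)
Your proof is correct, but it does not follow the paper's route, and the difference matters.

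The paper gives no separate argument. The corollary is placed directly after the theorem asserting that $\eptrzf(G,v)=\rad(G,v)$ if and only if every vertex is reachable from $v$ and every directed cycle contains $v$. The intended derivation is therefore to specialize that theorem to eccentricity $1$. That derivation fails, because the ``if'' direction of the theorem is false:
\begin{itemize}
\item Your transitive-tournament example satisfies both of its hypotheses.
\item Already the digraph with edges $c\to x$, $c\to y$, $x\to y$ satisfies them and has $\eptrzf(G,c)=\tfrac12\cdot 1+\tfrac12\cdot 2=\tfrac32$.
\item The faulty step in the paper is the claim that a directed path of length $\rad(G,v)$ into $w$ whose initial vertex is not $v$ contradicts the definition of the eccentricity. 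It does not.
\end{itemize}
Specialized literally, the theorem would allow any acyclic set of edges among the leaves and would contradict the count of $n$.

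Your direct observation is precisely the missing ingredient: a leaf $y$ with an in-neighbor other than $c$ turns blue in round $1$ with probability exactly $1/\deg^-(y)\le\tfrac12$. Together with the trivial bound $\eptrzf(G,c)\ge\ecc(c)$ and the one-round computation for the converse, it proves the corollary without the theorem. So drop the opening ``reduce to the characterization theorem'' and the phrase ``or equivalently the converse direction of the theorem's proof.'' Neither direction of that theorem excludes edges between leaves, and its ``if'' direction asserts the opposite. State plainly that your example is a counterexample to it and that your argument is independent of it.

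For the count, the edge number $(n-1)+k$ is an isomorphism invariant. It separates the $n$ classes for every $n\ge 2$ at once, so the separate $n=2$ check is unnecessary. The statement also tacitly requires $n\ge 2$ and no loops. A loop at $c$ would be an ``edge to the center'' that leaves the dynamics unchanged and doubles the count. A loop at a leaf is already excluded by your argument with $x=y$.
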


\section{Graph operations for unweighted directed graphs}\label{sec:operations}

In this section, we discuss generally how $\eptrzf$ could change with changes to the underlying graph. Generally, adding vertices or edges will have a broad effect on the expected propagation time. For example, if a white vertex $v$ is added to an unweighted directed graph $G$ such that the in-degree of $v$ is $1$ and the out-degree is $0$, then for any vertex $u$ of $G$ not equal to $v$, 
\begin{align}
    \eptrzf(G,u) \leq \eptrzf(G+v, u)  \leq \eptrzf(G,u)+1.
\end{align}
The upper bound is best possible, since for example, we can let $G$ be the directed graph with underlying undirected graph $K_{1, n}$ having edges from the center $u$ to each leaf, and we can let $v$ have an edge from one of the leaves. In this case, $\eptrzf(G,u) = 1$ and $\eptrzf(G+v, u) = 2$. The lower bound is also best possible, since we can take the same graph $G$ and instead let $v$ have an edge from $u$, in which case $\eptrzf(G+v, u) = \eptrzf(G, u) = 1$. Note that the same bounds also apply in the weighted case.

On the other hand, for the case that a blue vertex $b$ is added to $G$ with in-degree $0$ and out-degree $1$, we have an exact formula for the expected propagation time starting from $b$. 

\begin{proposition}
Let $G$ be any unweighted directed graph. If a blue vertex $b$ is added to $G$ with in-degree $0$ and out-degree $1$, and the incident vertex in $G$ to $b$ is $v$, then
    \[\eptrzf(G+b,b) = d + \eptrzf(G,v),\]
where $d$ is the in-degree of $v$ in $G+b$. 
\end{proposition}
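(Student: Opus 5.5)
The plan is to split the propagation from $\{b\}$ into two phases: the waiting time until $v$ turns blue, and the remaining time after that. I will show the first phase has expectation exactly $d$ and the second has the same law as RZF on $G$ started from $\{v\}$. Linearity of expectation then gives $\eptrzf(G+b,b)=d+\eptrzf(G,v)$.

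\emph{Phase 1.} Start from $B_0=\{b\}$. The only out-edge of $b$ goes to $v$, so every vertex of $G$ other than $v$ has no blue in-neighbor while $v$ is white. I will make this precise by induction on $t$: as long as $v\notin B_t$, we have $B_t=\{b\}$. Hence every white vertex other than $v$ turns blue with probability $0$. Meanwhile $v$ has exactly one blue in-neighbor, $b$, out of $d$ in-neighbors in $G+b$, so it turns blue with probability $1/d$ in each round, independently across rounds. Thus the first time $T_v$ at which $v$ is blue is $\Geom(1/d)$ with $\E[T_v]=d$, and at time $T_v$ the blue set is exactly $\{b,v\}$. If $d=1$ this phase is deterministic and takes one round, consistent with the formula.

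\emph{Phase 2.} After time $T_v$, I compare RZF on $G+b$ from $\{b,v\}$ with RZF on $G$ from $\{v\}$. The key observation is that for every vertex $x\neq v$ of $G$, the in-neighborhood of $x$ is the same in $G+b$ and in $G$, because $b$'s only out-edge goes to $v$. So for any blue set $B\cup\{b\}$ with $v\in B\subseteq V(G)$, each white $x$ has the same update probability in $G+b$ as it has in $G$ with blue set $B$. The vertex $v$ is already blue, so its changed in-degree is irrelevant. Using the same uniforms $U_t(x)$ in both processes, in the style of the coupling lemmas of Section~\ref{sec:basic}, gives $B^{G+b}_{T_v+s}=B^{G}_s\cup\{b\}$ for all $s\ge 0$. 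The Markov property at the stopping time $T_v$ lets me restart the process there.

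\emph{Conclusion.} It follows that the total time equals $T_v+\tau'$, where $\tau'$ has the law of the propagation time of $G$ from $\{v\}$ and is independent of $T_v$. Taking expectations gives the result. The same identity also covers the infinite case: $\eptrzf(G,v)=\infty$ if and only if $\eptrzf(G+b,b)=\infty$, which one can also see from Theorem~\ref{thm:finite_ept_characterization}.

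The only delicate step is justifying the restart at the random time $T_v$, that is, the strong Markov property or the coupling conditioned on $\mathcal F_{T_v}$. This is routine here because the chain is a time-homogeneous function of the current blue set.
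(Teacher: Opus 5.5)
Your proposal is correct and follows the same decomposition as the paper: first the waiting time for $v$ to turn blue, which is geometric with mean $d$ because $b$ reaches $G$ only through $v$, and then the remaining time, which has the law of RZF on $G$ from $\{v\}$. You make rigorous what the paper states in two lines: that nothing else turns blue before $v$, that vertices other than $v$ have unchanged in-neighborhoods so the post-$T_v$ dynamics coincide with RZF on $G$, and that the process can be restarted at the stopping time $T_v$.
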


\begin{proof}
    If we color $G+b$ starting from $b$, then the first vertex in $G$ that gets colored must be $v$. It takes $d$ expected rounds until $v$ is colored, and then an additional $\eptrzf(G,v)$ rounds to color the rest of $G$.
\end{proof}

In the following proposition, we extend this last operation to the case that $b$ joins multiple graphs. 

\begin{proposition}\label{prop:4.2}
Let $G_1, \ldots, G_m$ be unweighted directed graphs. Denote by $K$ the joined graph defined in the following way: there is a single vertex $b$ with out-degree $m$ and in-degree $0$ which is incident to a vertex $v_i$ for each $G_i$. Assume further that the $G_i$ have no vertices or edges in common as subgraphs of $K$. 

\medskip 

Let $t_i$ denote the propagation time for the graphs $G_i$ starting with the single blue node $v_i$. Then, 
\begin{align}
\eptrzf(K,b) \leq \max _{i} \eptrzf(G_i, v_i)  + \bigg{(} \frac{ \sum _{i=1}^m \textnormal{Var}(t_i)  }{2} \bigg{)}^{1/2} + O(r m^{1/2}).
\end{align}
where $r= \max \{ r_i\}$ and $r_i$ is the in-degree of $v_i$.
\end{proposition}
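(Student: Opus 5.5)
Starting from $b$, the subgraphs $G_i$ evolve independently once $b$ is blue, because the only edges between different pieces of $K$ are the edges $b\to v_i$. The plan is to write the total propagation time as a maximum of independent sums and bound it with a standard bound on the expected maximum of random variables.

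\emph{Decomposition.} The first vertex of $G_i$ to turn blue must be $v_i$, since $b$ is its only outside in-neighbor and no other vertex of $G_i$ has a blue in-neighbor before $v_i$ is blue. This is the same reasoning as in the previous proposition. Let $g_i$ be the number of rounds until $v_i$ turns blue. Since $b$ stays blue and is one of $r_i$ in-neighbors of $v_i$ in $K$, we have $g_i\sim\Geom(1/r_i)$ as long as no other vertex of $G_i$ is blue, which holds until $v_i$ turns blue. After $v_i$ is blue, the vertices of $G_i$ evolve exactly as RZF on $G_i$ from $v_i$. The only extra in-edge in $K$ is $b\to v_i$, and it is irrelevant after $v_i$ is blue. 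So the remaining time has the law of $t_i$. Because each round uses independent coins per vertex and the pieces are disjoint, the pairs $(g_i,t_i)$ are independent across $i$, and within each $i$ the $t_i$ part is independent of $g_i$. Hence the propagation time is $\tau=\max_i X_i$ with $X_i=g_i+t_i$ independent. Each $X_i$ has mean $\mu_i=r_i+\eptrzf(G_i,v_i)$ and variance $\Var(t_i)+r_i(r_i-1)$.

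\emph{Bounding the maximum.} I would use the classical inequality
\[
\E\max_i X_i\le \max_i\mu_i+\Bigl(\tfrac{m-1}{m}\sum_i\Var(X_i)\Bigr)^{1/2},
\]
or, more crudely, write $\max_i X_i\le \max_i\mu_i+\max_i (X_i-\mu_i)$ and bound the second term. To separate the two error terms, split further as $\max_i X_i\le \max_i(\mu^t_i+(t_i-\mu^t_i))+\max_i g_i$. For the $t$-part, the inequality $\E\max_i Y_i\le \max_i \E Y_i+\sqrt{\tfrac12\sum_i \Var(Y_i)}$ is what I would aim for. A direct route is $\E\max(Y_i-c)\le \E\max(Y_i-c)^+\le\bigl(\sum_i \E (Y_i-c)^{+2}\bigr)^{1/2}$ with $c=\max\mu_i$, noting $\E((Y_i-\mu_i)^+)^2\le \Var(Y_i)$. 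Getting exactly the factor $1/2$ requires the symmetric-part refinement (Aven's bound) or accepting a constant. This gives $\max_i\eptrzf(G_i,v_i)+(\sum_i\Var(t_i)/2)^{1/2}$. For the geometric part, $\E\max_i g_i\le \E\max$ of $m$ i.i.d.\ $\Geom(1/r)$ variables, because $\Geom(1/r_i)$ is stochastically dominated by $\Geom(1/r)$. By the Eisenberg lemma / Corollary~\ref{maxgeom} this is $O(r\log m)$, which is $O(rm^{1/2})$. Alternatively, the same second-moment bound gives $r+(m\,r^2)^{1/2}=O(rm^{1/2})$ directly.

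\emph{Main obstacle.} The delicate step is the constant $1/2$ inside the square root. Mixing the $g_i$ shift into the $t_i$ maximum, without losing $\max_i\E t_i$ to $\max_i(\E t_i+r_i)$, is absorbed into the $O(r m^{1/2})$ term. To obtain the sharp $\tfrac12$, I would first try the pairwise inequality $\max(a,b)=\tfrac{a+b}{2}+\tfrac{|a-b|}{2}$ summed over pairs, which gives Aven's bound $\max\mu_i+\sqrt{\tfrac{m-1}{m}\sum\Var}$. If the exact $\tfrac12$ cannot be recovered, I would state the bound with that constant instead, since the statement's form is otherwise unchanged.
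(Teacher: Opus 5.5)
Your decomposition matches the paper's: from $b$, $\tau=\max_i(g_i+t_i)$, with $g_i\sim\Geom(1/r_i)$ and all pieces independent. Your pathwise split $\max_i(g_i+t_i)\le\max_i t_i+\max_i g_i$ is a harmless variant of the paper's step. The paper instead applies one mean--variance bound to $X_i=t_i+\Geom(1/r_i)$ and peels off the geometric variance with $\sqrt{a+b}\le\sqrt{a}+\sqrt{b}$; your split even gives $O(r\log m)$ for that part.

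The gap is the one you flag yourself: nothing in the proposal produces the factor $\frac12$ under the square root. Your claim that this ``gives $\max_i\eptrzf(G_i,v_i)+(\sum_i\Var(t_i)/2)^{1/2}$'' does not follow from what precedes it. Aven's bound, as you correctly quote it, carries $\frac{m-1}{m}$, and your fallback weakens the statement for every $m\ge 3$.

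A sharper moment-only argument (pairwise $\max(a,b)$ identities and the like) cannot close this. For $J$ uniform on $\{1,\dots,m\}$ and $X_i=\mathbf{1}[J=i]$ one has $\E\max_i X_i-\max_i\E X_i=\frac{m-1}{m}=\sum_i\Var X_i$. This exceeds $\bigl(\frac12\sum_i\Var X_i\bigr)^{1/2}$ once $m\ge 3$. So the factor $\frac12$ must come from independence of the $t_i$. The paper's proof takes its $\frac{1}{\sqrt2}$ from Theorem 2.1 of Aven. If that is the general mean--variance bound you quote, it certifies $\frac12$ only for $m=2$, and the independence argument below is what justifies that step for $m\ge 3$.

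The missing argument goes as follows. Let $Y_1,\dots,Y_m$ be independent. Shifting each $Y_i$ up to mean $c=\max_j\E Y_j$ only increases the maximum, so assume $\E Y_i=0$ and $\Var Y_i=\sigma_i^2$. Break ties with independent uniforms $\xi_i$, and let $A_i=\Pr(i\text{ is the maximizer}\mid Y_i,\xi_i)$. Then
\[
\E\max_i Y_i=\sum_i\E[Y_iA_i]=\sum_i\E\bigl[Y_i(A_i-\E A_i)\bigr]\le\Bigl(\sum_i\sigma_i^2\Bigr)^{1/2}\Bigl(\sum_i\Var A_i\Bigr)^{1/2}.
\]
Take an independent copy $(Y',\xi')$. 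Conditional independence gives $\E A_i^2=\Pr(E_i)$, where $E_i$ is the event that $(Y_i,\xi_i)$ is lexicographically larger than every $(Y_j,\xi_j)$ and every $(Y'_j,\xi'_j)$ with $j\ne i$. Define $E'_i$ symmetrically.

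For $i\ne k$ the events $E_i\cap E_k$, $E'_i\cap E'_k$ and $E_i\cap E'_k$ are empty. Meanwhile $E_i\cap E'_i$ forces $i$ to be the maximizer in both copies, so $\Pr(E_i\cap E'_i)\le(\E A_i)^2$. Hence $2\sum_i\E A_i^2\le 1+\sum_i(\E A_i)^2$, so $\sum_i\Var A_i\le\frac12$. This gives $\E\max_i Y_i\le\max_i\E Y_i+(\frac12\sum_i\Var Y_i)^{1/2}$.

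Applying this to $Y_i=t_i$ in your split (or to $X_i=g_i+t_i$ as the paper does) yields exactly the stated bound, and the rest of your argument goes through.
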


\begin{proof}
Let $t_K$ denote the propagation time for $K$ with starting node $b$, so that $\eptrzf(K,b) = \mathbb{E}(t_K)$. By Theorem 2.1 of \cite{Aven1985}, 
\begin{align}
    \mathbb{E}(t_K) & = \mathbb{E}(\max_i \{ t_i + \textnormal{Geom}(1/r_i) \} ) \\ 
    & \leq \max _{i } \mathbb{E}(t_i + \textnormal{Geom}(1/r_i)) + \frac{1}{\sqrt{2}} \bigg{(} \sum_{i=1}^m \textnormal{Var}(t_i + \textnormal{Geom}(1/r_i))   \bigg{)}^{1/2}. \label{eq:three_join}
\end{align}
But $\textnormal{Geom}(1/r_i) $ and $t_i$ are independent random variables for all $i$, and so, 
\begin{align}
    \textnormal{Var}(t_i + \textnormal{Geom}(1/r_i)) & = \textnormal{Var}(t_i) + \frac{1-r_i^{-1}}{r_i^{-2}}.
\end{align}
By subadditivity of the square root function, 
\begin{align}\label{eq:two_join}
    \bigg{(} \frac{ \sum _{i=1}^m \textnormal{Var}(t_i) + \textnormal{Geom}(1/r_i) }{2} \bigg{)}^{1/2} & \leq (m/2)^{1/2} (r^2-r)^{1/2} + \bigg{(} \frac{\sum _{i=1}^m \textnormal{Var}(t_i) }{2} \bigg{)}^{1/2},
\end{align}
where $r = \max \{ r_G, r_H \}$. Lastly, it follows straightforwardly from linearity of expectation that, 
\begin{align}\label{eq:one_join}
    \max_{i} \mathbb{E}(t_i + \textnormal{Geom}(1/r_i)) \leq   \max_{i} \mathbb{E}(t_i ) + r. 
\end{align}
Substituting equations (\ref{eq:two_join}) and (\ref{eq:one_join}) into (\ref{eq:three_join}) gives the inequality. 
\end{proof}

\begin{corollary}\label{thm:joined-paths}
Fix a constant $k$. Consider the unweighted directed graph $G$ obtained by taking $k$ bidirected paths $G_1, \dots, G_k$ (of lengths $n_1,\dots,n_k$ respectively) and joining them at a single initial vertex $x$ that is initially blue. Vertex $x$ has directed out-edges to the starting vertices $v_1,\dots,v_k$ of each path. Then, we have 
\[ 
\eptrzf(G,x) \;=\; 2\max_i\{n_i\} + O(\sqrt{\max_i\{n_i\}})\,,
\] where the constant in the $O$-bound depends on $k$.
\end{corollary}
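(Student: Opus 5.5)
Apply Proposition~\ref{prop:4.2} for the upper bound and use a single-path argument for the lower bound. First fix the structure of each branch. Interpret each $G_i$ as a bidirected path on $n_i$ vertices (the same conventions work if ``length'' means $n_i$ edges) with $v_i$ an endpoint. In $G$, the vertex $v_i$ has in-degree $2$: one edge from $x$ and one from its successor on the path. Every other path vertex has the same in-neighbourhood as in $G_i$. Write $t_i$ for the propagation time of $G_i$ started at $v_i$. By the path theorem (Corollary~\ref{thm:bi_path}), $\E[t_i]=2n_i-3$. Moreover, by the same sequential argument used there, $t_i$ is a sum of $n_i-2$ independent $\Geom(1/2)$ variables plus the deterministic final step into the far endpoint. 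Hence $\Var(t_i)=2(n_i-2)=O(n_i)$.

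For the upper bound, apply Proposition~\ref{prop:4.2} with $m=k$ and $r=2$. This gives
\[
\eptrzf(G,x)\le \max_i(2n_i-3)+\Bigl(\tfrac12\textstyle\sum_{i=1}^k 2(n_i-2)\Bigr)^{1/2}+O(\sqrt{k}).
\]
The middle term is at most $\sqrt{k\max_i n_i}$, so for fixed $k$ the right side is $2\max_i n_i+O(\sqrt{\max_i n_i})$. One check is needed: the proposition's identity $t_K=\max_i\{t_i+\Geom(1/r_i)\}$ needs the delay before $v_i$ turns blue to be $\Geom(1/2)$ and independent of $t_i$. This holds because the successor of $v_i$ cannot be blue before $v_i$, so while $v_i$ is white exactly one of its two in-neighbours is blue. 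After that the branches evolve with disjoint independent coins.

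For the lower bound, let $j$ attain $\max_i n_i$. Propagation of $G$ takes at least as long as coloring branch $j$. In branch $j$ the vertices turn blue strictly in order along the path, since only the first white vertex has a blue in-neighbour. Each non-terminal vertex, including $v_j$, has in-degree $2$ with exactly one blue in-neighbour while it is eligible, so it needs a $\Geom(1/2)$ waiting time, and the last vertex needs one deterministic round. Thus the completion time of branch $j$ has expectation $2(n_j-1)+1=2n_j-1$, and $\eptrzf(G,x)\ge 2\max_i n_i-1$. Combining the two bounds gives the claim.

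The main obstacle is verifying that Proposition~\ref{prop:4.2} applies cleanly. Its statement defines $r_i$ as the in-degree of $v_i$ in $K$, and the ``$G_i$'' there must be the induced branch graphs with $v_i$'s in-degree counted correctly. The geometric delay has parameter $1/2$, not $1/r_i$ with $r_i$ taken inside $G_i$, and I would state this indexing explicitly. If the proposition's bookkeeping is unclear, a fallback is to bypass it. Repeat the Chebyshev-plus-union-bound computation from the balanced spider proof directly on $\max_i T_i$, where $T_i$ is a sum of at most $n_i$ independent $\Geom(1/2)$ variables plus a constant. That yields $\E[\max_i T_i]\le 2\max_i n_i+O(\sqrt{k\max_i n_i})$.
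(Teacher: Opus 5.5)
Your proposal is correct and follows essentially the paper's route: Proposition~\ref{prop:4.2} with $m=k$ gives the upper bound, using $\Var(t_i)=O(n_i)$ from the sequential geometric structure of a bidirected path, and the completion time of a single longest branch gives the lower bound. You are more careful than the paper in two places: you check explicitly that the entry delay at $v_i$ is $\Geom(1/2)$ (so $r_i$ must be read as the in-degree of $v_i$ in the joined graph), and you compute the branch-completion time $2n_j-1$ exactly; also, since only the order of the square-root term matters, the precise constant on the variance term in Proposition~\ref{prop:4.2} does not affect the conclusion.
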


\begin{proof}
Let $T_i$ denote the propagation time on the path $G_i$ when $v_i$ is the initial blue vertex. By Proposition~\ref{prop:4.2} (applied with $m=k$), we have the bound
\[
\eptrzf(G,x) \;\le\; \max_i\{\,E[T_i]\,\} \;+\; O\!\Big(\sqrt{\max_i\{\Var(T_i)\}}\Big)+O(1),
\] where the constants in the $O$-bounds depend on $k$.

Now, each $G_i$ is a bidirected path with one end fed by a single source. We showed that $E[T_i] = 2n_i - 1$ and $\Var(T_i) = O(n_i)$. Therefore, $\max_i\{E[T_i]\} = 2\max_i\{n_i\} - 1$, and $\max_i\{\Var(T_i)\} = O(\max_i\{n_i\})$.

Substituting these into the bound from Proposition~\ref{prop:4.2} gives 
\[
\eptrzf(G,x) \;\le\; 2\max_i\{n_i\} + O(\sqrt{\max_i\{n_i\}})\,.
\] 
Finally, we have a lower bound of $\eptrzf(G,x) \ge 2\max_i\{n_i\}$, since $E[T_i] = 2n_i - 1$ and it takes at least $1$ round for $v_i$ to get colored. Hence, we conclude that 
\[
\eptrzf(G,x) = 2\max_i\{n_i\} + O(\sqrt{\max_i\{n_i\}})\,,
\] 
as claimed.
\end{proof}

In the next result, we consider the effect of adding a single directed edge between two directed graphs.

\begin{thm}
Let $G, H$ be unweighted directed graphs and suppose that $u \in V(G)$ and $v \in V(H)$, where $v$ has indegree $d$ in $H$. Let $K$ be the directed graph obtained from $G$ and $H$ by adding a single edge from $u$ to $v$. Then, for all $w \in V(G)$, we have \[\eptrzf(K,w) \le 1+d+\eptrzf(G,w)+\eptrzf(H,v).\]
\end{thm}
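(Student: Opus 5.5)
The plan is to split the RZF process on $K$ started from $\{w\}$ into three consecutive phases using stopping times, and to bound the expected length of each phase by one of the three terms $\eptrzf(G,w)$, $1+d$, and $\eptrzf(H,v)$. If either $\eptrzf(G,w)$ or $\eptrzf(H,v)$ is infinite there is nothing to prove, so I will assume both are finite.

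\emph{Phase 1: coloring $G$.} The only added edge is $(u,v)$, and its head lies in $H$. Hence every vertex of $G$ has exactly the same in-neighborhood in $K$ as in $G$. Since there are no edges from $V(H)$ into $V(G)$, the blue set restricted to $V(G)$ evolves as a Markov chain. Its update probabilities are exactly those of RZF on $G$ started from $\{w\}$. Let $\sigma_1$ be the first round at which all of $V(G)$ is blue in $K$; then $\E[\sigma_1]=\eptrzf(G,w)$.

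\emph{Phase 2: coloring $v$.} Let $\sigma_2\ge\sigma_1$ be the first round at or after $\sigma_1$ at which $v$ is blue. In $K$, $v$ has in-degree $d+1$. For every round $t\ge\sigma_1$ at which $v$ is still white, $u$ is blue, so $v$ turns blue in round $t+1$ with conditional probability at least $1/(d+1)$. The fresh, independent coin flips give that $\sigma_2-\sigma_1$ is stochastically dominated by $\Geom(1/(d+1))$; this is the same argument as in the proof of Theorem~\ref{thm:finite_ept_characterization}. Hence $\E[\sigma_2-\sigma_1]\le d+1$. (If $v$ was already blue at $\sigma_1$, this phase has length $0$.)

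\emph{Phase 3: coloring $H$.} At time $\sigma_2$ all of $G$ is blue, so it remains only to color $H$. The blue set $B\cap V(H)$ at that time contains $v$. Every vertex of $H$ other than $v$ has the same in-neighborhood in $K$ as in $H$. The in-neighborhood of $v$ itself differs, but this is irrelevant because $v$ is already blue. So, conditional on $\mathcal F_{\sigma_2}$ (strong Markov property at the stopping time $\sigma_2$), the remaining process on $V(H)$ is RZF on $H$ started from the random set $B_{\sigma_2}\cap V(H)\supseteq\{v\}$. By the monotonicity corollary for initial sets (unweighted version), its expected completion time is at most $\eptrzf(H,v)$. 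Hence $\E[\tau-\sigma_2]\le\eptrzf(H,v)$.

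Summing the three bounds via $\tau=\sigma_1+(\sigma_2-\sigma_1)+(\tau-\sigma_2)$ and linearity of expectation gives $\eptrzf(K,w)\le \eptrzf(G,w)+1+d+\eptrzf(H,v)$, as claimed.

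The step needing the most care is Phase 3. I will handle it by conditioning on each possible value of $(\sigma_2, B_{\sigma_2})$, applying the monotonicity corollary for each fixed initial set $S\ni v$, and then averaging over these values. Phase 1's exact identification, with $G$ not influenced by $H$, is also needed; it is immediate from the edge structure.
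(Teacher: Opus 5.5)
Your proof is correct and uses the same three-phase decomposition as the paper: color $G$ in expected time $\eptrzf(G,w)$, wait at most $d+1$ expected rounds for $v$ while $u$ is blue, then finish $H$ in expected time at most $\eptrzf(H,v)$. Your version is more rigorous than the paper's informal sketch, because the stopping times and the appeal to monotonicity in the initial set explicitly handle the case where $v$, and hence part of $H$, turns blue before $G$ is fully colored.
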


\begin{proof}
If $w$ is the initial blue vertex, then the expected number of rounds to color all of $G$ is $\eptrzf(G,w)$. Since $v$ has indegree $d+1$ in $K$, it takes $d+1$ expected rounds for $v$ to be colored after $u$ is colored. After $v$ is colored, the expected number of rounds to color all of $H$ is $\eptrzf(H,v)$. Altogether, it takes at most $\eptrzf(G,w)+d+1+\eptrzf(H,v)$ expected rounds to color all of $K$, starting from $w$.
\end{proof}

In the next result, we show that the bound in the last theorem is best possible by exhibiting an infinite family of examples which attain the bound.

\begin{thm}
For all $m$ and $n > d$, there exist unweighted directed graphs $G$ and $H$ and vertices $u, w \in V(G)$ and $v \in V(H)$ where $v$ has indegree $d$ in $H$ such that $\eptrzf(G,w) = m$, $\eptrzf(H,v) = n$, and the directed graph $K$ obtained from $G$ and $H$ by adding an edge from $u$ to $v$ satisfies \[\eptrzf(K,w) = 1+d+\eptrzf(G,w)+\eptrzf(H,v).\]
\end{thm}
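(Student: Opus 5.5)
The plan is to make every part of the process deterministic except the single waiting step at $v$. Then the expectation splits exactly into $m$, plus $d+1$, plus $n$, so the upper bound of the previous theorem is attained.

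\textbf{Choosing $G$.} Let $G=\overrightarrow{P_{m+1}}$ be the unidirectional path $w=g_0\to g_1\to\cdots\to g_m=u$. By Theorem~\ref{thm:arborescences}, RZF on $G$ started from $w$ is deterministic, and $g_j$ turns blue at exactly round $j$. Hence $\eptrzf(G,w)=m$, and $u$ turns blue at exactly round $m$. (If $m=0$, take $G$ to be the single vertex $u=w$.)

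\textbf{Choosing $H$.} Let $H$ have vertices $v=y_0,y_1,\dots,y_n$ with edges $y_{j-1}\to y_j$ for $1\le j\le n$, together with back edges $y_i\to v$ for $1\le i\le d$. These back edges exist because $n>d$. Then $v$ has indegree exactly $d$ in $H$. Every directed cycle of $H$ passes through $v$, every vertex is reachable from $v$, and the directed eccentricity of $v$ is $n$. By the characterization theorem for $\eptrzf(G,v)=\rad(G,v)$, we get $\eptrzf(H,v)=n$. Alternatively, check directly that each $y_j$ with $j\ge1$ has indegree $1$, so it turns blue exactly one round after $y_{j-1}$.

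\textbf{Analyzing $K$ from $w$.} No edge goes from $H$ to $G$, so the $G$-part of $K$ evolves exactly as in $G$. In particular, $u$ turns blue at round $m$, and all of $G$ is blue by round $m$.

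The vertex $v$ has in-neighbors $u,y_1,\dots,y_d$ in $K$, so its indegree is $d+1$. Every $y_i$ is reachable only through $v$, so by the reachability invariant in the proof of Theorem~\ref{thm:finite_ept_characterization}, no $y_i$ can be blue while $v$ is white. Therefore, in each round after $u$ is blue and while $v$ is still white, $v$ turns blue with probability exactly $1/(d+1)$, independently across rounds. The waiting time for $v$ is thus exactly $\Geom(1/(d+1))$, with mean $d+1$.

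Once $v$ is blue, each $y_j$ has indegree $1$, so $y_j$ turns blue exactly $j$ rounds after $v$. This means $H$ finishes exactly $n$ rounds after $v$ turns blue. The total propagation time is therefore $m+\Geom(1/(d+1))+n$, and its expectation is $1+d+m+n$, which gives the claimed equality.

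The only delicate point is the exactness of the geometric wait at $v$. It needs the fact that none of $v$'s $H$-in-neighbors can turn blue early, and this is exactly why $H$ is built so that it is reachable only through $v$.
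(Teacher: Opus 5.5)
Your proof is correct and uses essentially the paper's construction. The paper also takes $G=\overrightarrow{P_{m+1}}$ and $H$ a unidirectional path with $d$ back edges into $v$, except that it takes them from the $d$ rightmost vertices rather than from $y_1,\dots,y_d$, which makes no difference; your argument supplies the details the paper only asserts. One caution: justify $\eptrzf(H,v)=n$ by your direct indegree-$1$ check, not by the cited $\rad(G,v)$ characterization, because that theorem's ``if'' direction is false in general. For example, the graph with edges $v\to a$, $v\to b$, $a\to b$ has no directed cycles and $v$ has eccentricity $1$, yet RZF from $v$ has expected propagation time $3/2$.
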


\begin{proof}
Let $G = \overrightarrow{P_{m+1}}$ and let $H$ be the graph obtained from $\overrightarrow{P_{n+1}}$ by adding edges from the $d$ rightmost vertices to the left endpoint. Let $w$ be the left endpoint of $G$, $u$ be the right endpoint of $G$, and $v$ be the left endpoint of $H$. Then $\eptrzf(G,w) = m$, $\eptrzf(H,v) = n$, and $\eptrzf(K) = m+(d+1)+n$.
\end{proof}

\section{Further results for weighted directed graphs}\label{sec:weighted_extremal}

On a weighted directed graph $G$, we define a \emph{fort} to be a subset of vertices such that each vertex has at least one in-neighbor in the fort. Given that $G$ is strongly connected, we note that unlike in classical zero forcing, no subgraph can remain unforced forever. We aim to define a RZF analogue where entry into the fort occurs only through rare random events, allowing the expected propagation time to grow arbitrarily large if the starting set of blue vertices do not intersect the fort.

\begin{thm}
    On any graph $G$ with two vertex disjoint forts, there exists a weight assignment of the edges such that $\eptrzf(G)$ is arbitrarily high.
\end{thm}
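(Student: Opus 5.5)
Let $F_1,F_2$ be the two vertex-disjoint forts. The plan is to choose weights so that edges inside each fort are heavy and edges entering a fort from outside are light. Concretely, fix a small parameter $\varepsilon>0$. Give weight $1$ to every edge $(x,y)$ with both $x,y\in F_i$ for some $i$. Give weight $\varepsilon$ to every other edge. In particular, every edge with head in $F_i$ and tail outside $F_i$ gets weight $\varepsilon$.

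By the definition of a fort, each $y\in F_i$ has at least one in-neighbor inside $F_i$. So the total in-weight at $y$ is at least $1$. The weight of its in-edges from outside $F_i$ is at most $\varepsilon\deg^-(y)\le \varepsilon n$. Hence, whenever every vertex of $F_i$ is white, each $y\in F_i$ turns blue in the next round with probability at most $\varepsilon n$. A union bound then shows that the probability some vertex of $F_i$ turns blue in a given round is at most $\varepsilon n^2$, as long as $F_i$ is entirely white.

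Now take any starting vertex $s$. Since $F_1\cap F_2=\varnothing$, $s$ lies outside at least one fort, say $F_j$. Until the first round in which a vertex of $F_j$ becomes blue, all of $F_j$ is white, so the bound above applies in every one of those rounds. Let $T$ be the first time a vertex of $F_j$ is blue. Then $\Pr(T>t)\ge (1-\varepsilon n^2)^t$. The full propagation time $\tau$ satisfies $\tau\ge T$, since every vertex of $F_j$ must eventually turn blue. Summing tail probabilities gives
\[
\eptrzf(G,s)\ \ge\ \E[T]\ \ge\ \sum_{t\ge 0}(1-\varepsilon n^2)^t=\frac{1}{\varepsilon n^2}.
\]
The same holds if $\eptrzf(G,s)=\infty$. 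Taking the minimum over $s$ gives $\eptrzf(G)\ge 1/(\varepsilon n^2)$, and letting $\varepsilon\to 0$ makes $\eptrzf(G)$ arbitrarily large.

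The main point needing care is the conditional bound. The bound $p_t(y)\le \varepsilon n$ must hold uniformly over every history in which $F_j$ is still entirely white, no matter which outside vertices are blue. This is exactly what the fort property and the normalization give. One also has to decide how to read ``arbitrarily high'' if finiteness is required. If $G$ is strongly connected, as the surrounding text assumes, all weights are positive and Theorem~\ref{thm:finite_ept_characterization} gives $\eptrzf(G)<\infty$ for each $\varepsilon$. The resulting statement is that these finite values are unbounded as $\varepsilon\to0$.
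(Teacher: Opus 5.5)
Your proof is correct. It uses the same core mechanism as the paper:
\begin{itemize}
\item weight $\varepsilon$ on fort-entering edges;
\item a union bound of $\varepsilon n^2$ on the per-round probability that an all-white fort is entered;
\item the resulting bound $\E[T]\ge 1/(\varepsilon n^2)$ on the entry time;
\item disjointness of $F_1,F_2$ to find a fort missing the start vertex.
\end{itemize}
Your route differs from the paper's in two ways that matter.

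\textbf{Quantifier order.} The paper penalizes only the edges entering $F_1$. It then says to apply the construction to whichever fort misses the initial vertex $v$, so its weight assignment depends on $v$. As written, this shows only that each $\eptrzf(G,v)$ can individually be made large. It does not show that one weighting makes $\eptrzf(G)=\min_v\eptrzf(G,v)$ large. With only $F_1$ penalized, a start inside $F_1$ need not be slow. Your assignment penalizes entry into both forts at once while keeping intra-fort edges at weight $1$. So a single weighting works for every start vertex, and the minimum is handled correctly.

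\textbf{Justifying the entry bound.} The paper bounds the entry probability through the total \emph{outgoing} weight of a blue vertex $u$. That is not how RZF probabilities are determined, and the paper never visibly uses the fort hypothesis there. Your bound works on the recipient side: each $y\in F_j$ has in-weight at least $1$ from its in-neighbor inside $F_j$, while its blue in-weight is at most $\varepsilon n$ as long as $F_j$ is all white. This is exactly where the fort property enters, and it matches the update rule.

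The paper's version is marginally simpler because it modifies only one fort's boundary. Yours is the one that actually establishes the stated claim about $\eptrzf(G)$. It tacitly uses that forts are nonempty (needed for $\tau\ge T$), which is the standard convention.
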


\begin{proof}
Let $n=|V(G)|$, and fix an arbitrary $k\in\mathbb{N}$. Let $F_1$ and $F_2$ be two vertex-disjoint forts in $G$.

We define an edge-weighting as follows. Assign weight $\varepsilon>0$ to every edge $(u,v)$ with $u\notin F_1$ and $v\in F_1$ (edges \emph{entering} $F_1$), and weight $1$ to every other edge of $G$. We will choose $\varepsilon$ sufficiently small in terms of $k$ and $n$.

Consider a round $t$ such that $F_1\cap B_t=\emptyset$. In order for some vertex of $F_1$ to become blue in round $t+1$, there must exist a blue vertex $u\notin F_1$ that forces some $v\in F_1$ along an entering edge $(u,v)$. For any such $u$, the total outgoing weight from $u$ to white vertices is at least $1$ (since all non-entering edges have weight $1$), while each entering edge from $u$ to $F_1$ has weight $\varepsilon$. Hence, regardless of the configuration outside $F_1$,
\[
\Pr(u \text{ forces a vertex in }F_1 \mid F_1\cap B_t=\emptyset)\le \frac{\deg^+_{F_1}(u)\,\varepsilon}{1}
\le n\varepsilon,
\]
where $\deg^+_{F_1}(u)$ denotes the number of out-neighbors of $u$ in $F_1$, and we used $\deg^+_{F_1}(u)\le n$.

Applying the union bound over all vertices $u$ (at most $n$ choices), we obtain
\[
\Pr(\text{some vertex of }F_1\text{ is forced in round }t+1 \mid F_1\cap B_t=\emptyset)
\le n\cdot (n\varepsilon)=n^2\varepsilon.
\]
Therefore the time $T_{F_1}$ until $F_1$ is first entered is stochastically dominated below by a geometric random variable with success probability at most $n^2\varepsilon$, and thus
\[
\mathbb{E}[T_{F_1}] \ge \frac{1}{n^2\varepsilon}.
\]
Choosing $\varepsilon<\frac{1}{kn^2}$ yields $\mathbb{E}[T_{F_1}]>k$.

Finally, for any initial blue vertex $v$, since $F_1$ and $F_2$ are disjoint, at least one of them does not contain $v$. Apply the above construction to whichever fort does not intersect the initial blue set. In all cases, this produces an edge-weight assignment for which $\eptrzf(G)>k$. Since $k$ was arbitrary, $\eptrzf(G)$ can be made arbitrarily large.
\end{proof}

\subsection{Extremal results}

We now derive sharp extremal bounds for the expected propagation time on weighted directed graphs in terms of the minimum edge weight and the order of the graph.

\begin{thm}
  Suppose that $G$ is any weighted directed graph of order $n$ with all positive edge weights at least $w$, and $v$ is a vertex with forward paths to all other vertices in $G$. Then we have $\eptrzf(G, v) \le 1+\frac{n-2}{w}$.  
\end{thm}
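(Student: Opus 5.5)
The plan is to follow the proof of Theorem~\ref{thm:max-indeg-tail}: decompose the propagation time according to how many white vertices remain, and bound the expected length of each stage.

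Before that, I must settle how the hypothesis ``all positive edge weights at least $w$'' is read. By Lemma~\ref{lem:incoming_scaling}, multiplying all incoming weights at a vertex by $\lambda>0$ leaves the process unchanged, so the bound can only be meaningful under a normalization. I will therefore assume the input-share convention of the motivating model: the total incoming weight $W^-(x)=\sum_u w_{ux}$ at every vertex is at most $1$, and every positive edge weight is at least $w$ (so $w\le 1$). The key consequence is that any white vertex $x$ with at least one blue in-neighbor turns blue in the next round with probability at least $w/W^-(x)\ge w$.

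Run RZF from $\{v\}$, and let $B_t$ and $W_t$ be the blue and white sets after $t$ rounds. For $r\in\{0,1,\dots,n-1\}$, let $\sigma_r$ be the first round with $|W_t|\le r$. Then $\sigma_{n-1}=0$ and
\[
\tau=\sigma_0=\sum_{r=1}^{n-1}(\sigma_{r-1}-\sigma_r).
\]

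\emph{Stages $r\ge 2$.} Suppose $W_t\neq\varnothing$. Every vertex is reachable from $v$ in $G^+$, which by Theorem~\ref{thm:finite_ept_characterization} is exactly what finiteness requires. Take a directed path from $v$ to a white vertex and look at its first white vertex: this gives a positive-weight edge from $B_t$ to $W_t$. So some white vertex has a blue in-neighbor, and by the normalization it turns blue in the next round with probability at least $w$. Hence, conditional on $\mathcal F_{\sigma_r}$, the time $\sigma_{r-1}-\sigma_r$ is stochastically dominated by a $\Geom(w)$ random variable. This uses the independence of coin flips across rounds, exactly as in Theorem~\ref{thm:max-indeg-tail}. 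It gives $\E[\sigma_{r-1}-\sigma_r]\le 1/w$ for each of the $n-2$ stages $r=2,\dots,n-1$.

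\emph{Stage $r=1$.} The unique remaining white vertex $x$ has every other vertex blue, so all of its in-neighbors are blue. Since $x$ is reachable from $v$, it has positive in-weight. Therefore its update probability is exactly $1$, and this stage lasts at most one round.

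Summing by linearity of expectation gives $\E[\tau]\le (n-2)/w+1$, which is the claimed bound.

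I expect the main obstacle to be the normalization issue rather than the probabilistic bookkeeping. Without some upper bound on total incoming weight, the statement is false by the scaling invariance of Lemma~\ref{lem:incoming_scaling}. I would therefore state explicitly that weights are interpreted as input shares, so that $W^-(x)\le 1$. The rest, namely the stochastic domination by geometric variables across stages, is routine and identical to the earlier argument. If the paper's intended convention were instead $W^-(x)=1$ exactly, the same proof applies verbatim.
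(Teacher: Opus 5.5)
Your proof is correct and follows the paper's argument. While some vertex is white, there is a positive-weight in-edge from a blue vertex to a white one, so that white vertex turns blue with probability at least $w$; this gives at most $1/w$ expected rounds for each of the first $n-2$ increases, the last white vertex is forced deterministically, and your stopping times $\sigma_r$ simply make this bookkeeping precise. Your explicit normalization $W^-(x)\le 1$ is exactly what the paper uses tacitly when it asserts success probability at least $w$ (the statement is false without it, by Lemma~\ref{lem:incoming_scaling}). Like the paper, your final deterministic step also implicitly assumes $G$ has no self-loops.
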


\begin{proof}
Given any configuration with some positive number of blue vertices and some positive number of white vertices, there exists a white vertex $u$ that receives some edge from a blue vertex. Since all edge weights are at least $w$, the probability that $u$ gets colored is at least $w$. Therefore the probability of increasing the number of blues is at least $w$, so the expected time to increase the number of blues is at most $\frac{1}{w}$. Each increase is by at least $1$ and the coloring becomes deterministic if only one uncolored vertex is left, so the expected propagation time is at most $1+\frac{n-2}{w}$.
\end{proof}

By adding weights to the construction in Theorem~\ref{thm:omega_n2}, we can show that the $1+\frac{n-2}{w}$ upper bound is sharp.

\begin{thm}\label{thm:w_att}
    There exists a weighted directed graph $G$ of order $n$ with all positive edge weights at least $w$ such that $G$ has only one vertex $v$ with forward paths to all other vertices and $\eptrzf(G, v) = 1+\frac{n-2}{w}$.
\end{thm}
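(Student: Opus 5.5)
The plan is to adapt the path construction rather than the $a_i,b_i$ graph of Theorem~\ref{thm:omega_n2}. The reason is that we need each of $n-2$ forcing steps to cost exactly $1/w$ in expectation, and the final step to cost exactly $1$. I will read the bound $1+\frac{n-2}{w}$ the way the preceding upper-bound proof implicitly does: incoming weights at each vertex are normalized to total $1$, so an edge of weight at least $w$ from a blue vertex gives forcing probability at least $w$. I will assume $0<w\le \tfrac12$ (for $w=1$ the directed path $\overrightarrow{P_n}$ already works).

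\emph{Construction.} Take vertices $x_1,\dots,x_n$ with $v=x_1$. Add a forward edge $(x_{i-1},x_i)$ of weight $w$ for $2\le i\le n-1$, and the edge $(x_{n-1},x_n)$ of weight $1$. Add backward edges $(x_{i+1},x_i)$ of weight $1-w$ for $2\le i\le n-1$, and no edge into $x_1$. Every positive weight is then at least $w$, since $1-w\ge w$. The incoming total at each of $x_2,\dots,x_n$ is $1$. Vertex $x_1$ reaches everything along the forward edges. No other vertex has a forward path to all vertices, because $x_1$ has indegree $0$.

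\emph{Computation.} This is exactly a weighted bidirectional path as in the path theorem of Section~\ref{sec:families}, with $l_i=w$ and $r_i=1-w$ for $2\le i\le n-1$. The only differences are that the edge $(x_2,x_1)$ is deleted and $x_n$ has the single in-edge from $x_{n-1}$. Deleting $(x_2,x_1)$ does not affect the process started at $x_1$, since $x_1$ is blue from the start. Rerunning that proof gives the following:
\begin{enumerate}
\item Vertices are colored in the order $x_1,x_2,\dots,x_n$ almost surely, since before $x_i$ is blue no vertex $x_j$ with $j>i$ has a blue in-neighbor.
\item Each $x_i$ with $2\le i\le n-1$ waits a $\Geom(w)$ number of rounds, with mean $1/w$.
\item Vertex $x_n$ is forced deterministically one round after $x_{n-1}$.
\end{enumerate}
Linearity of expectation then gives $\eptrzf(G,v)=\frac{n-2}{w}+1$.

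\emph{Main obstacle.} The step needing care is the weight normalization. The statement only says weights are at least $w$, so I must make explicit that the relevant notion is relative weight: each in-weight total is $1$, and $w\le \tfrac12$ makes the backward weight $1-w$ admissible. I also need to check the degenerate small cases $n=2$ and $n=3$ separately. For $n=2$ the graph is a single edge of weight $1$ and the time is $1$. For $n=3$ there is one geometric step plus the final deterministic step.
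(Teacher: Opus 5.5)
Your construction is the paper's own: forward edges of weight $w$ along $x_1\to\cdots\to x_{n-1}$, a final edge $(x_{n-1},x_n)$ of weight $1$, and backward edges into $x_2,\dots,x_{n-1}$. Your count of $n-2$ geometric waits of mean $1/w$ plus one deterministic round is exactly the paper's argument. The paper never states the backward weights, so your explicit choice $1-w$ with in-weight totals normalized to $1$ (forcing $w\le \tfrac12$) fills in something the paper omits. Skipping $\tfrac12<w<1$ loses nothing: under that normalization every vertex then has at most one positive in-edge, since two would exceed total weight $1$. The process is therefore deterministic with time at most $n-1<1+\frac{n-2}{w}$ for $n\ge 3$, so no graph attains the bound in that range.
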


\begin{proof}
Consider the weighted directed graph on vertices $v_1, v_2, \dots, v_n$ with edges from $v_i$ to $v_{i+1}$ for each $1 \le i < n$ and edges from $v_{i+1}$ to $v_i$ for $2 \le i < n$. The edge from $v_i$ to $v_{i+1}$ for each $1 \le i < n-1$ has weight $w$, while the edge from $v_{n-1}$ to $v_n$ has weight $1$. 

The initial blue vertex must be $v_1$, or else it would never be colored. Once $v_i$ has been colored with $i < n-1$, the expected number of rounds until $v_{i+1}$ is colored is $\frac{1}{w}$. Once $v_{n-1}$ has been colored, $v_n$ is colored in the next round. Thus, $\eptrzf(G, v_1) = 1+\frac{n-2}{w}$.
\end{proof}

\begin{corollary}
    Over all weighted directed graph $G$ of order $n$ with all positive edge weights at least $w$ and over all $v \in V(G)$ with forward paths to all other vertices, the maximum possible value of $\eptrzf(G, v)$ is $1+\frac{n-2}{w}$.
\end{corollary}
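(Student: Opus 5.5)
The corollary has two halves: an upper bound valid for every admissible pair $(G,v)$, and a single pair that attains it. I will obtain both from the two preceding theorems.

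\emph{Upper bound.} Fix a weighted directed graph $G$ of order $n$ whose positive edge weights are all at least $w$. Fix a vertex $v$ with forward paths to every other vertex. The preceding theorem gives directly
\[
\eptrzf(G,v)\le 1+\frac{n-2}{w}.
\]
So the supremum over all admissible pairs is at most $1+\frac{n-2}{w}$. The forward-path hypothesis is precisely the reachability condition of Theorem~\ref{thm:finite_ept_characterization}, so every quantity in this range is finite and the maximum is meaningful.

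\emph{Attainment.} By Theorem~\ref{thm:w_att} there is a weighted directed graph of order $n$ that meets all the requirements. Its positive edge weights are all at least $w$, and it has a vertex $v_1$ with forward paths to every other vertex. For this vertex,
\[
\eptrzf(G,v_1)=1+\frac{n-2}{w}.
\]
The value is therefore achieved, and the maximum equals $1+\frac{n-2}{w}$.

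\emph{Remaining check.} The only step needing care is that the construction in Theorem~\ref{thm:w_att} really satisfies the hypothesis "all positive edge weights at least $w$". That construction also uses edges $v_{i+1}\to v_i$ (whose weight is left implicit, presumably $1$) and one edge of weight $1$. Both are fine only if $w\le 1$.

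The regime $w\le 1$ is the meaningful one in any case. After normalization by Lemma~\ref{lem:incoming_scaling}, the success probability bound $w$ used in the upper-bound proof implicitly requires incoming weights normalized to sum to $1$. Under that normalization, every individual edge weight is at most $1$.

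I will state this reading explicitly: weights are normalized, so $w\le 1$. Under that assumption both ingredients apply verbatim, and the corollary follows.
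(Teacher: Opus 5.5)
\textbf{Gap in the attainment step.} Your route is the paper's route: the corollary is just the preceding upper bound combined with the construction of Theorem~\ref{thm:w_att}. You are also right that the step ``$u$ turns blue with probability at least $w$'' needs each vertex's in-weights to total at most $1$. Without that the bound is false: the bidirected path with all weights $1$ and $w=1$ has $\eptrzf=2n-3>n-1$ from an endpoint, for $n\ge 3$.

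The gap is in how you settle the attainment check. Suppose the backward edges $v_{i+1}\to v_i$ had weight $1$. Then each $v_i$ with $2\le i\le n-1$ turns blue with probability $\frac{w}{1+w}$ per round. The example would then have $\eptrzf(G,v_1)=(n-1)+\frac{n-2}{w}$, which is not the claimed value and in fact exceeds the claimed maximum. Such edges are not even admissible under your normalization, since the in-weight at $v_i$ would be $1+w$; rescaling them pushes the forward weight below $w$. The per-step time $1/w$ in Theorem~\ref{thm:w_att} requires backward weights exactly $1-w$. The hypothesis that every positive weight is at least $w$ then holds only when $1-w\ge w$, i.e.\ $w\le\frac12$, not $w\le 1$. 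So ``both ingredients apply verbatim for $w\le 1$'' is false.

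\textbf{The statement fails for $\tfrac12<w<1$.} This range cannot be rescued, because for $n\ge 3$ the statement is false there under your normalization. Two positive in-edges into one vertex would already carry total weight more than $1$, so every vertex other than $v$ has exactly one positive in-edge. Each such vertex then turns blue exactly one round after its unique in-neighbor. The process is therefore deterministic with propagation time $\ecc(v)\le n-1$, and $n-1<1+\frac{n-2}{w}$. The true maximum in this range is $n-1$, attained by the directed path.

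\textbf{What your argument does prove.} Once the hypotheses are made explicit, it gives the following. For graphs whose in-weights are normalized to total $1$:
\begin{itemize}
\item If $0<w\le\frac12$, the maximum is $1+\frac{n-2}{w}$. The extremal example has forward weights $w$, final forward edge of weight $1$, and backward weights $1-w$.
\item If $\frac12<w\le 1$, the maximum is $n-1$.
\end{itemize}
You also need the graphs to be loopless, which matters since the paper's data section uses self-loops. A self-loop of weight $1-w$ at the last white vertex makes the final step take $1/w$ expected rounds rather than $1$. For instance, with $n=2$ this gives $\eptrzf=1/w>1$.
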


\subsection{Operations on weighted directed graphs}

In this subsection, we consider some operations on weighted directed graphs and their effects on expected propagation time. 

\begin{proposition}
Let $G$ be any weighted directed graph. If a blue vertex $b$ is added to $G$ with in-degree $0$ and out-degree $1$, the incident vertex in $G$ to $b$ is a vertex $v$ with in-weight $d$, and the weight on the new edge is $w$, then
    \[\eptrzf(G+b,b) = \frac{d+w}{w} + \eptrzf(G,v).\]
\end{proposition}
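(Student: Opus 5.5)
The plan is to split the propagation time on $G+b$ started from $\{b\}$ into two independent phases: the waiting time until $v$ turns blue, and the subsequent time to color the rest of $G$. I then show that the first phase is geometric with mean $\frac{d+w}{w}$ and the second has the law of the RZF propagation time on $G$ started from $\{v\}$. I read $d$ as the total in-weight of $v$ in $G$, so that $v$ has total in-weight $d+w$ in $G+b$; if $d=0$ the formula gives $1+\eptrzf(G,v)$, which the argument below handles without change.

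\textbf{Phase 1.} Let $T$ be the first round in which $v$ is blue. I claim that for every $t<T$ we have $B_t=\{b\}$. The vertex $b$ has a single out-edge, to $v$, and is the only blue vertex. So every vertex $x\neq v$ of $G$ has zero blue in-weight and turns blue with probability $0$. The vertex $v$ itself has blue in-weight exactly $w$ out of total in-weight $d+w$. Hence in each round, independently, $v$ turns blue with probability $\frac{w}{d+w}$ and nothing else changes. This gives $T\sim\Geom\!\bigl(\frac{w}{d+w}\bigr)$ and $\E[T]=\frac{d+w}{w}$, with $B_T=\{b,v\}$ exactly.

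\textbf{Phase 2.} From time $T$ on, I compare the process on $G+b$ with RZF on $G$ started from $\{v\}$. The key observation is that adding $b$ changes no in-weight except $v$'s. The vertex $b$ has no in-edges, so it is never a white vertex that could update, and its only out-neighbor $v$ is already blue. Therefore, for every white $x\in V(G)$ and every blue set $B\ni b$, the update probability of $x$ in $G+b$ with blue set $B$ equals its update probability in $G$ with blue set $B\setminus\{b\}$. The reason is that both the numerator and the denominator of $p(x)$ involve only edges inside $G$.

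By the Markov property of the chain in Definition~\ref{def:rzf-process} and the independence of the coin flips in later rounds, the process $(B_{T+s}\setminus\{b\})_{s\ge 0}$ has the same law as RZF on $G$ from $\{v\}$. This process is also independent of $T$. The remaining time therefore has expectation $\eptrzf(G,v)$, and linearity of expectation gives the formula. The infinite case is included, since both sides are infinite exactly when some vertex of $G$ is unreachable from $v$ (Theorem~\ref{thm:finite_ept_characterization}).

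The only step requiring care is this Phase 2 identification. One must check that the restarted process is a genuine copy of RZF on $G$. This needs two facts: that the denominators at vertices other than $v$ are unaffected by adding $b$, and that $v$'s larger denominator no longer matters once $v$ is blue.
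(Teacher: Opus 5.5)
Your proof is correct and uses the same decomposition as the paper. There is a geometric wait with mean $\frac{d+w}{w}$ for $v$ to turn blue, during which nothing else can change, followed by a fresh copy of RZF on $G$ started from $v$. You also make explicit what the paper's two-line argument leaves implicit: the strong Markov restart at the stopping time $T$, the fact that adding $b$ alters no update probability except $v$'s, and the consistency of the infinite case.
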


\begin{proof}
As in the unweighted case, if we color $G+b$ starting from $b$, then the first vertex in $G$ that gets colored must be $v$. It takes $\frac{d+w}{w}$ expected rounds until $v$ is colored, and then an additional $\eptrzf(G,v)$ rounds to color the rest of $G$.
\end{proof}

In the next result, we consider the effect of adding a single directed weighted edge between two weighted directed graphs.

\begin{thm}
Let $G, H$ be weighted directed graphs and suppose that $u \in V(G)$ and $v \in V(H)$, where $v$ has in-weight $d$. Let $K$ be the directed graph obtained from $G$ and $H$ by adding a single edge from $u$ to $v$ with weight $w$. Then, for all $x \in V(G)$, we have \[\eptrzf(K,x) \le \frac{d+w}{w}+\eptrzf(G,x)+\eptrzf(H,v).\]
\end{thm}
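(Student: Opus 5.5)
We mirror the unweighted argument, decomposing the propagation time on $K$ started from $x$ into three consecutive phases and bounding the expected length of each. Let $\tau_G$ be the first round at which all of $V(G)$ is blue, $\tau_v$ the first round at which $v$ is blue, and $\tau_K$ the first round at which all of $V(K)$ is blue. Then
\[
\tau_K \le \tau_v + \tau',
\]
where $\tau'$ is the time, measured after $\tau_v$, needed to finish. Since $\tau_v \le \tau_G + (\tau_v-\tau_G)^+$, it suffices to bound the three expectations $\E[\tau_G]$, $\E[(\tau_v-\tau_G)^+]$ and $\E[\tau']$.

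\textbf{Phase 1.} We show $\E[\tau_G]=\eptrzf(G,x)$. The only new edge points from $u\in V(G)$ into $H$, so every vertex of $G$ has exactly the same in-neighbors and in-weights in $K$ as in $G$. Hence the update probability of each $g\in V(G)$ depends only on $B_t\cap V(G)$. The restriction of the $K$-process to $V(G)$ is therefore exactly the RZF process on $G$ started from $x$, giving $\E[\tau_G]=\eptrzf(G,x)$.

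\textbf{Phase 2.} After $\tau_G$, the vertex $u$ is blue. The in-weight of $v$ in $K$ is $d+w$, and the edge $(u,v)$ contributes blue weight $w$. Thus in each subsequent round, conditional on the history, $v$ turns blue with probability at least $w/(d+w)$, independently of previous rounds. (Vertices of $H$ may already be blue, which only raises this probability.) So $(\tau_v-\tau_G)^+$ is stochastically dominated by $\Geom(w/(d+w))$, and its expectation is at most $(d+w)/w$.

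\textbf{Phase 3.} This is the step needing care, since at time $\tau_v$ some vertices of $H$ may already be blue and $v$'s in-weight has changed. First, for vertices of $H$ other than $v$, in-neighbors and weights are unchanged from $H$. The extra edge into $v$ is irrelevant once $v$ is blue, so from time $\tau_v$ onward the process on $V(H)$ evolves exactly as RZF on $H$ started from the blue set $B_{\tau_v}\cap V(H)\supseteq\{v\}$. Apply the strong Markov property at $\tau_v$, then the monotonicity corollary in initial sets ($\eptrzf(H,S_2)\le\eptrzf(H,S_1)$ for $S_1\subseteq S_2$). This gives that the conditional expected remaining time to color $H$ is at most $\eptrzf(H,v)$. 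Meanwhile $G$ is already blue once $\tau_v\ge\tau_G$. In the subcase $\tau_v<\tau_G$, the finishing time is at most $\max(\tau_G,\tau_v+\tau_H')\le \tau_G+\tau_H'$, so the same bound applies.

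Summing the three phase bounds gives
\[
\eptrzf(K,x)\le \frac{d+w}{w}+\eptrzf(G,x)+\eptrzf(H,v).
\]
If $\eptrzf(G,x)$ or $\eptrzf(H,v)$ is infinite, the bound holds trivially.
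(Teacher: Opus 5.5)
Your proposal is correct and uses the same three-phase decomposition as the paper: color $G$, wait a time for $v$ that is geometrically dominated with success probability $w/(d+w)$, then run RZF on $H$ from $v$; you make it rigorous via the strong Markov property. The phases combine most cleanly through the pointwise bound $\tau_K=\max(\tau_G,\tau_v+\tau_H')\le \tau_G+(\tau_v-\tau_G)^{+}+\tau_H'$, since $\E[\tau']$, read as the total remaining time, is not itself bounded by $\eptrzf(H,v)$ when $\tau_v<\tau_G$ (your subcase remark does recover this). Also, since $(u,v)$ is the only edge entering $V(H)$, $v$ is necessarily the first vertex of $H$ to turn blue, so $B_{\tau_v}\cap V(H)=\{v\}$ exactly and the appeal to monotonicity is not needed.
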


\begin{proof}
If $x$ is the initial blue vertex, then the expected number of rounds to color all of $G$ is $\eptrzf(G,x)$. Since the new edge has weight $w$ and the remaining in-weight to $v$ is $d$, it takes $\frac{d+w}{w}$ expected rounds for $v$ to be colored after $u$ is colored. After $v$ is colored, the expected number of rounds to color all of $H$ is $\eptrzf(H,v)$. Altogether, it takes at most $\eptrzf(G,x)+\frac{d+w}{w}+\eptrzf(H,v)$ expected rounds to color all of $K$, starting from $x$.
\end{proof}

In the next result, we show that the bound in the last theorem is best possible by exhibiting an infinite family of examples which attain the bound.

\begin{thm}
For all $m, n > 0$, there exist weighted directed graphs $G$ and $H$ and vertices $u, x \in V(G)$ and $v \in V(H)$ such that $\eptrzf(G,x) = m$, $\eptrzf(H,v) = n$, $v$ has in-weight $d$ in $H$, and the weighted directed graph $K$ obtained from $G$ and $H$ by adding an edge from $u$ to $v$ with weight $w$ satisfies \[\eptrzf(K,x) = \frac{d+w}{w}+\eptrzf(G,x)+\eptrzf(H,v).\]
\end{thm}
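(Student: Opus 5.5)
The plan is to mirror the unweighted extremal construction for the single-edge join, now with weights. Take $G$ to be the unidirectional path $\overrightarrow{P_{m+1}}$ with all edge weights $1$, let $x$ be its left endpoint (indegree $0$) and $u$ its right endpoint. By Theorem~\ref{thm:arborescences}, the process on $G$ started at $x$ is deterministic and $\eptrzf(G,x)=m$. Take $H$ to be the unidirectional path $\overrightarrow{P_{n+1}}$ on vertices $h_0=v,h_1,\dots,h_n$ with weight-$1$ forward edges. Then add back-edges into $v$ from vertices that are \emph{not} on the way to being colored before $v$: concretely, a single edge from the right endpoint $h_n$ to $v$ of weight $d$. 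Now $v$ has in-weight $d$ in $H$. The key point is that, starting from $v$, the back-edge never affects propagation inside $H$, since $v$ is already blue. Every other vertex of $H$ has indegree $1$, so the process is deterministic and $\eptrzf(H,v)=n$. If $n$ is not required to be an integer, I would instead replace one forward edge $h_0\to h_1$ by a bidirected pair with suitable weights, so that the geometric waiting time at $h_1$ gives a prescribed real expectation. I expect that generality is not needed, and will treat $m,n$ as positive integers as in the unweighted analogue.

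Next I will form $K$ by adding the edge $(u,v)$ of weight $w$ and analyze RZF on $K$ from $x$. The steps, in order:
\begin{enumerate}
\item No vertex of $H$ can turn blue before $v$: every other vertex of $H$ has its unique in-neighbor inside $H$, and $v$'s in-neighbors are $u$ and $h_n$.
\item The vertices of $G$ are colored deterministically at rounds $1,\dots,m$. In particular $u$ becomes blue exactly at round $m$, and $G$ receives no edges from $H$, so its dynamics are unchanged.
\item From round $m$ on, while $v$ is white, $h_n$ is white, since it lies downstream of $v$. So $v$ turns blue each round independently with probability exactly $\frac{w}{d+w}$. This gives a geometric waiting time with mean $\frac{d+w}{w}$.
\item Once $v$ is blue, the remainder of $H$ is colored deterministically in $n$ further rounds, because each $h_i$ has indegree $1$.
\end{enumerate}

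The propagation time is therefore $m+\mathrm{Geom}\bigl(\tfrac{w}{d+w}\bigr)+n$. Its expectation is $m+\frac{d+w}{w}+n$, which attains the bound of the preceding theorem.

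The main obstacle is making sure the in-weight $d$ at $v$ comes from vertices that are provably white whenever $v$ is white. Otherwise the success probability at $v$ would exceed $\frac{w}{d+w}$ and the bound would not be attained. Placing the entire weight $d$ on an edge from a vertex downstream of $v$ in $H$ settles this, since reachability forces that vertex to be colored after $v$ (Step 1). The degenerate case $n$ small (e.g.\ $n=1$, where $h_n=h_1$) works the same way, because $h_1$ still cannot be blue before $v$.
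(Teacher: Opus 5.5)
Your proof is correct and is essentially the paper's construction. The paper also takes $G=\overrightarrow{P_{m+1}}$ and lets $H$ be a directed path from $v$ plus a weight-$d$ back-edge into $v$. The only difference is that the paper's back-edge comes from $v$'s out-neighbor $v'$ rather than from the far endpoint $h_n$. This is immaterial, since any vertex downstream of $v$ is white whenever $v$ is white.

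Your step-by-step verification is more detailed than the paper's one-line justification: $G$ is colored deterministically by round $m$, then $v$ waits a $\mathrm{Geom}\bigl(\tfrac{w}{d+w}\bigr)$ time, then $H$ takes $n$ deterministic rounds. The non-integer aside is unnecessary, and as phrased it would not work, because adding $h_1\to h_0$ does not slow down $h_1$.
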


\begin{proof}
Let $G = \overrightarrow{P_{m+1}}$ have left endpoint $x$ and right endpoint $u$, with all edges of weight $1$. Let $H$ be the weighted directed graph obtained from $\overrightarrow{P_{n+1}}$ with left endpoint $v$ and leftmost edge $(v, v')$ by adding an edge $(v', v)$ of weight $d$ and setting all other edge weights to $1$. Then, $\eptrzf(G,x) = m$, $\eptrzf(H,v) = n$, and $\eptrzf(K) = m+\frac{d+w}{w}+n$.
\end{proof}

\section{RZF on economic data}\label{sec:data}
To illustrate how the methods developed in this paper can be applied beyond stylized graph families, we conclude with a data-driven example using the US Beaureu of Economic Analysis (BEA) input-output accounts \cite{bea_io}. Interpreting each sector as a node and inter-sectoral supply flows as weighted directed edges (including self-loops reflecting internal use), we obtain a real economic network on which RZF can be run without additional modeling assumptions. Computing the EPT from each singleton starting sector produces a propagation-time profile that reflects the inherent structural dependencies of the U.S. production system. Although this example is not intended as an economic analysis, it demonstrates that RZF can be meaningfully extended to empirical networks, capturing asymmetries in flow structure and highlighting sectors whose linkages make them faster or slower origins under the RZF dynamics. This suggests that RZF may be a useful exploratory tool for understanding propagation phenomena in complex real-world systems.

Computation for EPT were conducted using our publicly available implementation \footnote{\url{https://github.com/noah-lichtenberg/rzf_simulation}}, inspired by existing work on using Markov Chains to compute expected propagation time in probabilistic zero forcing \cite{pzf_using_markov}. The implementation works as follows: the BEA input-output data is first converted into a directed graph, with nodes being the 15 sectors and weighted directed edges indicating the monetary value of commodities supplied from one sector to another in 2024, as reported in the BEA input-output tables. We next construct a Markov transition matrix with the transition states being the $2^{15}$ possible colorings of our directed graph, and transition probabilities are derived from the randomized zero forcing rule. Expected propagation times from singleton initial sets are then computed using a dynamic programming approach. 

\begin{figure}[htbp!]
    \centering
    \includegraphics[width=0.95\linewidth]{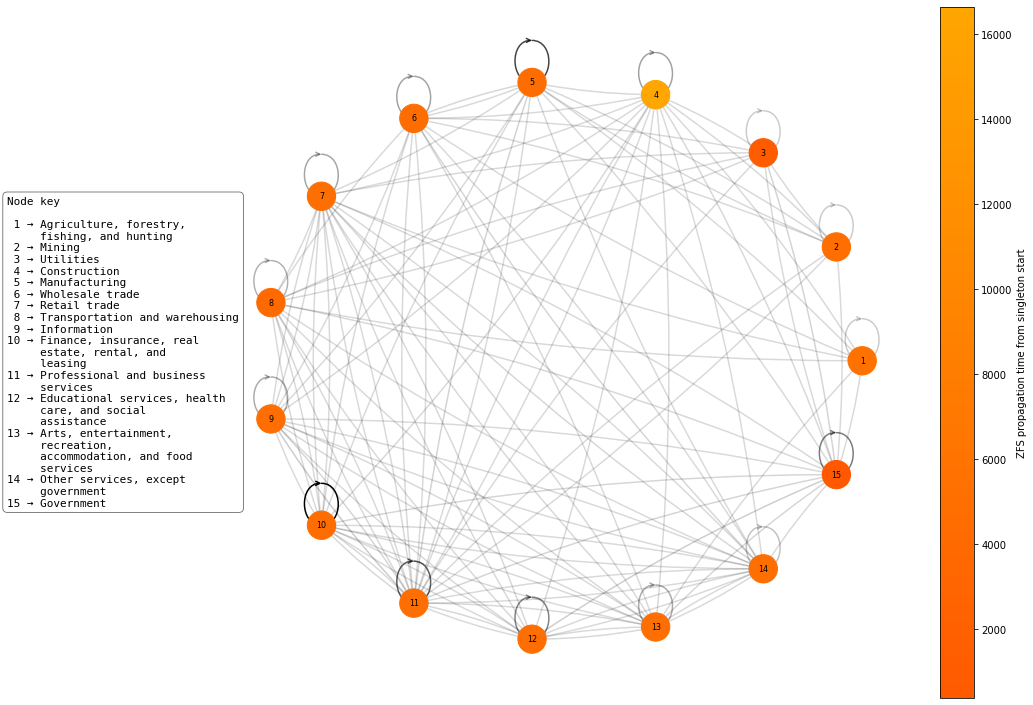}
    \caption{RZF expected propagation times on the BEA input-output sector network. 
    Nodes correspond to economic sectors and are colored by the EPT from a singleton start; 
    edge widths and opacities reflect relative inter-sectoral supply magnitudes.}%
    \label{fig:bea-rzf}
\end{figure}

\begin{table}[ht]
\centering
\caption{Expected propagation time (EPT) from singleton initial sets for BEA sectors}
\label{tab:bea_ept}
\begin{tabular}{clc}
\toprule
Node & Sector & EPT \\
\midrule
1  & Agriculture, forestry, fishing, and hunting & 5679.87 \\
2  & Mining & 4768.77 \\
3  & Utilities & 933.76 \\
4  & Construction & 16652.22 \\
5  & Manufacturing & 4652.18 \\
6  & Wholesale trade & 4684.16 \\
7  & Retail trade & 4777.49 \\
8  & Transportation and warehousing & 4055.41 \\
9  & Information & 4714.20 \\
10 & Finance, insurance, real estate, rental, and leasing & 4667.29 \\
11 & Professional and business services & 4717.40 \\
12 & Educational services, health care, and social assistance & 4755.98 \\
13 & Arts, entertainment, recreation, accommodation, and food services & 4842.59 \\
14 & Other services, except government & 4912.73 \\
15 & Government & 385.06 \\
\bottomrule
\end{tabular}
\end{table}

For the BEA sector network, the computed $\eptrzf$ values vary widely across sectors (see Figure~\ref{fig:bea-rzf}). We first note that the EPT values are extremely high across all sectors, which can be attributed to the large edge weights on the self-loops, indicating that many sectors rely mostly on themselves rather than other sectors. Notably, the government sector stands out with the smallest expected propagation time (approximately $385$), indicating that a shock originating in government would spread through the economy more quickly than one from any other sector. The utilities sector is also highly central with a very low $\eptrzf$ (under $1000$). In contrast, the construction sector yields by far the largest propagation time (on the order of $1.7\times 10^4$), suggesting that a disturbance starting in construction remains comparatively localized for a long time. Most other industries fall in an intermediate range of $\eptrzf$ (roughly $4\times 10^3$-$5\times 10^3$); for example, manufacturing, wholesale trade, retail, information, finance, and professional services all have EPT values on the order of $4.6\times 10^3$-$4.8\times 10^3$, while agriculture is modestly higher (about $5.7\times 10^3$). These magnitudes reflect the varying degrees of connectivity and influence each sector has in the input-output network.

Importantly, $\eptrzf$ in this context provides a quantitative measure of peripherality for each sector in the production network. A larger expected propagation time means the sector is more peripheral - a shock originating there diffuses slowly to the rest of the economy - whereas a smaller $\eptrzf$ implies the sector is more structurally central, with shocks from that sector reaching other sectors more rapidly. In other words, the inverse of $\eptrzf$ can be interpreted as a centrality index under the RZF dynamic: sectors like government and utilities, which have high $1/\eptrzf$ (due to low propagation times), occupy central positions in the network of inter-industry linkages. Conversely, sectors such as construction (with a very low $1/\eptrzf$ due to its huge propagation time) are situated on the network’s periphery. This perspective complements traditional static centrality measures by focusing on dynamic risk propagation behavior. It underscores, for instance, that government’s extensive linkages (e.g. procurement from many industries) and utilities’ fundamental input role make them pivotal for rapid contagion, while construction’s more self-contained supply chain position renders it a slower conduit for propagating shocks. Overall, examining $\eptrzf$ across all nodes highlights a pronounced asymmetry in the economy’s connectivity: some sectors serve as fast “hubs” for spreading disturbances, whereas others act as bottlenecks or dead-ends, suggesting that RZF-based analysis can be used for identifying structurally critical sectors.

\section{Conclusion and future directions}\label{sec:conclusion}

In this paper we introduced \emph{randomized zero forcing} (RZF), a stochastic
color-change process on directed graphs in which the probability that a white
vertex turns blue depends on the fraction of its in-neighbors that are blue,
and we extended the model to weighted directed graphs by replacing fractions of
neighbors with fractions of incoming weight.  We established basic monotonicity
properties (with respect to enlarging the initial blue set, and with respect to
increasing weights on edges out of initially blue vertices), and we identified
weight normalizations under which the weighted process reduces to the unweighted
process.  We then computed exact propagation times or sharp asymptotics for a
variety of graph families (including arborescences, stars, paths, cycles, and
balanced spiders), and we proved extremal bounds for unweighted directed graphs
in terms of parameters such as the number of edges, order, maximum degree, and
radius, with constructions showing sharpness up to constant factors.  Finally,
we discussed how $\eptrzf$ behaves under several graph operations and illustrated
how the weighted model can be run on an empirical input-output network.

Our results suggest several natural directions for further study. In deterministic zero forcing, throttling problems quantify the tradeoff between the size of the initial blue set and the time needed to force the whole graph.  Because $\eptrzf(G,S)$ is monotone nonincreasing in $S$, an analogous optimization problem is well-posed for RZF.  For example, one can define an \emph{RZF throttling number} by \[
        \thrzf(G)\;:=\;\min_{S\subseteq V(G)}\Bigl(|S|+\eptrzf(G,S)\Bigr),
    \]
or consider constrained variants such as minimizing $\eptrzf(G,S)$ subject
to $|S|\le k$, or minimizing $|S|$ subject to $\eptrzf(G,S)\le T$. Throttling has previously been investigated for standard zero forcing \cite{butler}, skew zero forcing \cite{skewth}, positive semidefinite zero forcing \cite{psdth}, power domination \cite{pdthrot}, the cop-versus-robber game \cite{cr2,cr1,damage}, the cop-versus-gambler game \cite{gth2,gth1}, and metric dimension \cite{thdim}. It would
be especially interesting to determine $\thrzf(G)$ (or sharp bounds for it) on the same families studied here (paths, cycles, trees, complete graphs), and to compare how the optimal tradeoff depends on directionality and (in the weighted setting) on the distribution of incoming weights.

A compelling extension for applications is to allow recovery, i.e., blue vertices can revert to white.  One simple variant is: after each round of RZF updates, each blue vertex independently reverts to white with probability $\rho\in(0,1)$ (or with a vertex-dependent rate $\rho_v$).  This produces a finite-state Markov chain with potentially rich behavior: depending on parameters, the all-blue state may no longer be absorbing, and one may instead study (i) the probability of ever reaching all-blue, (ii) expected hitting times conditional on reaching all-blue, and/or (iii) the stationary distribution and mixing time.  Developing techniques that replace monotone couplings, which are central in the present paper, would be a key step in understanding this regime. Vertex reversion was investigated for probabilistic zero forcing in \cite{brennan}.

Throughout the paper we focus on expectations, but several arguments (for example, the spider analysis and joined-graph bounds) naturally raise questions about variance and higher moments.  For a given family, one can ask for asymptotics of $\Var(\tau)$ where $\tau$ is the propagation time, and for tail bounds of the form $\Prob(\tau \ge t)$ or $\Prob(|\tau-\E\tau|\ge t)$.  Such results would clarify when the expectation is representative of typical behavior, and could enable sharper bounds for graph operations that involve maxima over independent or weakly dependent sub-processes.

The weighted model is motivated by input-output and other flow networks.
This suggests optimization problems in which one modifies weights or adds edges to \emph{slow} or \emph{accelerate} propagation under constraints.
For example, given a budget to increase or decrease a small set of incoming weights, which modifications maximize $\eptrzf(G,S)$ (to increase resilience) or minimize it (to model fast diffusion)?  The monotonicity results in the basic section provide starting tools for such comparative statics, but a broader theory for general weight perturbations (not restricted to edges out of $S$) would be especially useful.

Many real-world networks evolve over time, with edges or weights that change in response to the state of the system. Extending RZF to temporal graphs or
adaptive-weight models, where weights respond dynamically to the coloring
process itself, could significantly broaden its applicability. In such
settings, expected propagation time may interact nontrivially with feedback
mechanisms, creating new phenomena absent in static graphs.

The BEA example illustrates that $\eptrzf$ can be computed on real weighted directed networks and used as a dynamic, process-based notion of centrality. Important next steps include understanding sensitivity to measurement error and missing edges as in \cite{brz23}, developing principled coarse-graining (aggregation of vertices) that preserves propagation behavior, and comparing $\eptrzf$-based rankings to classical centrality measures.  These questions are particularly relevant when the network is only partially observed or evolves over time.

\section*{Acknowledgments}
This work was initiated at an AIM SQuaRE workshop on ''Systemic Importance of Entities and Risk Propagation in Input-Output Graphs.'' We thank AIM for providing a supportive research environment. GPT-5 was used for proof development, exposition, and revision.

\end{document}